\documentclass{amsart}

\usepackage{epsfig,amsmath,amsthm,amssymb, bbm, bm, xcolor, float, mathrsfs}
\usepackage[shortlabels]{enumitem}
\numberwithin{equation}{section}

\usepackage{calc}

\makeatletter
\newcommand{\dbwidetilde}[1]{{%
  \mathpalette\db@widetilde{#1}%
}}
\newcommand{\db@widetilde}[2]{%
  \sbox\z@{$\m@th#1\widetilde{#2}$}%
  \ht\z@=.8\ht\z@
  \widetilde{\box\z@}%
}
\def\beqs#1\eeqs{%
    \begin{equation}\begin{split}%
    #1%
    \end{split}\end{equation}%
}
\def\beqs#1\eeqs{%
    \begin{equation}\begin{split}%
    #1%
    \end{split}\end{equation}%
}
\makeatother
\newcommand{\lp}{\left(}
\newcommand{\rp}{\right)}
\newcommand{\lb}{\left[}
\newcommand{\rb}{\right]}

\newcommand{\bZ}{{\bf Z}}
\newcommand{\bY}{{\bf Y}}
\newcommand{\bpm}{\begin{pmatrix}}
\newcommand{\epm}{\end{pmatrix}}
\newcommand{\bbeta}{{\bm\beta}}
\newcommand{\bx}{{\bf x}}

\newcommand{\bg}{{\bf g}}

\newcommand{\balph}{{\bm\alpha}}
\newcommand{\bmta}{{\bm\theta}}
\newcommand{\indep}{\perp \!\!\! \perp}
\newtheorem{thm}{Theorem}[section]
\newtheorem{lemma}[thm]{Lemma}
\newtheorem{prop}[thm]{Proposition}

\theoremstyle{definition}
\newtheorem{example}[thm]{Example}
\newtheorem{defn}{Definition}
\newtheorem*{defn*}{Definition}
\theoremstyle{remark}
\newtheorem{remark}[thm]{Remark}

\newcommand{\beq}{\begin{equation}}
\newcommand{\eeq}{\end{equation}}

\newcommand{\noin}{\noindent}
\newcommand{\E}{\mathbb E}
\newcommand{\R}{\mathbb R}

\newcommand{\e}{\mathrm{exp}}
\newcommand{\ta}{\theta}
\newcommand{\sg}{\sigma}
\newcommand{\la}{\langle}
\newcommand{\ra}{\rangle}
\newcommand{\lla}{\left\langle}
\newcommand{\rra}{\right\rangle}
\definecolor{AfonsoBlue}{RGB}{30,65,123}

\newcommand{\anya}[1]{{\textcolor{cyan}{[\textbf{Anya: #1}]}}}

%
\title[Likelihood Maximization \& Moment Matching in low SNR GMMs]{Likelihood Maximization and Moment Matching in Low SNR Gaussian Mixture Models} 
\date{}
\author[A.E. Katsevich]{Anya E. Katsevich$^*$}
\thanks{$^*$Email: katsevich@cims.nyu.edu. Department of Mathematics, Courant Institute of Mathematical Sciences, New York University, USA.\\ AEK is supported by the DOE Computational Science Graduate Fellowship.}
\author[A.S.Bandeira]{Afonso S. Bandeira$^\dagger$}
\thanks{$^\dagger$Email: bandeira@math.ethz.ch. Department of Mathematics, ETH Zurich, Switzerland. \\Part of this work was done while ASB was with the Department of Mathematics, Courant Institute of Mathematical Sciences and the Center for Data Science at NYU and supported partly by NSF grants DMS-1712730, DMS-1719545 and by a grant from the Sloan foundation.}



\begin{document}

\maketitle

\begin{abstract}
We derive an asymptotic expansion for the log likelihood of Gaussian mixture models (GMMs) with equal covariance matrices in the low signal-to-noise regime. The expansion reveals an intimate connection between two types of algorithms for parameter estimation: the method of moments and likelihood optimizing algorithms such as Expectation-Maximization (EM). We show that likelihood optimization in the low SNR regime reduces to a sequence of least squares  optimization problems that match the moments of the estimate to the ground truth moments one by one. This connection is a stepping stone toward the analysis of EM and maximum likelihood estimation in a wide range of models. A motivating application for the study of low SNR mixture models is cryo-electron microscopy data, which can be modeled as a GMM with algebraic constraints imposed on the mixture centers. We discuss the application of our expansion to algebraically constrained GMMs, among other example models of interest.
\end{abstract}




\section{Introduction}
Gaussian mixtures are a useful model to describe data in a wide variety of applications. Nevertheless, strong theoretical guarantees on the performance of classical algorithms for inference in Gaussian mixture models (GMMs) are lacking. This is primarily due to the complicated structure of the GMM log likelihood landscape. The most popular algorithm for inference is Expectation-Maximization (EM), an iterative algorithm which performs ``soft assignment'' of observations to mixture components. Although EM maximizes a surrogate function to the log likelihood at each step, it can nevertheless be viewed as gradient ascent on the log likelihood in the setting we study here. As such, analyzing it and other log likelihood optimizing algorithms is challenging. 

Most existing guarantees are for for the case in which the component distributions of the mixture are ``well-separated''. 
In~\cite{balakrishnan2017} the authors characterize the basin of attraction in which the EM algorithm is guaranteed to converge to the global maximum of the likelihood of a well-separated two component mixture. This is generalized in~\cite{two-mixture}, in which the authors provide a global analysis of the convergence of EM for two component mixtures. A further generalization is obtained in~\cite{gradientEM}, in which the basin of attraction for gradient EM in arbitrary mixtures with equal covariances is quantified, also under the assumption of some separation between component distributions. For mixtures with three or more components, \cite{jin2016local} shows that there are well-separated mixtures for which the log likelihood landscape has bad local maxima. Moreover, they show that in some cases, the EM algorithm can converge to these bad critical points with high probability.

Other algorithms have been proposed for learning the parameters of poorly separated GMMs in polynomial time~\cite{GMMarbsep, two-mix-poor-sep} without relying on the log likelihood. The latter paper is based on the method of moments. While EM and its variants are the most widely used methods for inference in GMMs, the method of moments is another class of inference methods which bypasses the log likelihood entirely. This method was proposed by Karl Pearson in his 1894 paper \cite{pearson_crabs}, which also introduces the Gaussian mixture inference problem for the first time. Pearson shows that the parameters of a mixture of two one-dimensional Gaussians can be deduced from the mixture's first six moments. In general, the approach is to form estimates from the data of enough moments of the distribution to uniquely specify it. The challenge is then to ``invert'' the moments to recover the ground truth parameters. As an example, for a $K$-component uniform mixture with centers $\mu_{(1)},\dots,\mu_{(K)}\in\R^d$, which we collectively denote $\mu$, the moments are defined as \begin{align*} T_1(\mu) &=\frac1K\mu_{(1)} + \dots + \frac1K\mu_{(K)}\in\R^d, \\
T_2(\mu) &= \frac1K\mu_{(1)}\mu_{(1)}^T + \dots + \frac1K\mu_{(K)}\mu_{(K)}^T\in\R^{d\times d},\end{align*} with higher moments $T_k(\mu)$ given by higher order tensors. Given estimates of the ground truth moment tensors $T_k(\mu_*)$, moment inversion amounts to finding $\mu$ such that $T_k(\mu) = T_k(\mu_*), k=1,2,\dots$. In some models, the moment tensors take a particularly convenient form and can be inverted explicitly. When this is not possible, one alternative approach is to minimize the objective function
\beq\label{introeqn}\min_{\mu\in\R^{dK}}\sum_k \lambda_k\|T_k(\mu) - T_k(\mu_*)\|^2,\eeq where $\lambda_k$ are regularizing weights.

In this paper, we study the log likelihood landscape of Gaussian mixture models in $\R^d$ with the following defining characteristics: (1) the covariance matrices of the mixture components are all the same, and (2) the center of each mixture component is small in norm relative to $|\Sigma|^{1/d}$, where $\Sigma$ is the covariance of each of the mixture components. We will think of $\Sigma$ as being known (although this is not required for our main result), and the mixture centers as the ``signal'' we wish to estimate. Since this is made more difficult by larger variances, one can think of mixtures with this second feature as having low signal-to-noise ratio (SNR).

We show an intimate connection between log likelihood optimization and the method of moments in the low SNR regime. We do so by deriving an asymptotic series expansion of the GMM log likelihood with respect to a small parameter related to the SNR. This expansion illuminates the structure of the likelihood landscape.  It shows that in the low SNR regime, log likelihood maximization reduces to a sequence of least squares minimization problems, in which successively higher moments are matched to those of the true distribution on the manifold on which all previous moments have been been fixed to the ground truth values.  

For the uniform mixture example, these minimization problems take the form
\beq\label{introeqn-2}\min_{\mu\in\mathcal V_{k-1}}\|T_k(\mu) - T_k(\mu_*)\|^2, k=1,2,\dots,\eeq where $\mathcal V_0 = \R^{Kd}$ and $$\mathcal V_k = \{\mu\in\R^{Kd}\mid T_\ell(\mu) = T_\ell(\mu_*), \ell=1,\dots, k\}.$$

This is very similar to the strategy of moment inversion described above. Indeed, taking weights $\lambda_1\gg\lambda_2\gg\dots$ in \eqref{introeqn} effectively reduces that minimization problem to the sequence of individual moment matching problems \eqref{introeqn-2}. 

This connection allows one to relate the roughness of the log-likelihood landscape with the roughness of the landscape of least squares moment matching objectives. In Section~\ref{sec:examplesimplications}, we will classify the critical points of this moment matching landscape in two illustrative examples: a uniform mixture of two Gaussians in arbitrary dimension and an arbitrary (finite) mixture of Gaussians in one dimension. In general, however, understanding the roughness of this landscape can be a highly non-trivial task and is outside the scope of this paper. 

The motivation for Taylor expanding the log likelihood comes from~\cite{bandeira17_opt}. In that paper, Taylor expansions for upper and lower bounds on the log likelihood are derived.
However, in order to analyze algorithms which depend on the landscape of the log likelihood (i.e. on the function's derivatives), a Taylor expansion of the log likelihood itself is needed. For a certain class of models a recent paper \cite{fan2020likelihood}, fruit of parallel research efforts, also establishes such an expansion, as we will discuss in more detail below.

A natural class of models to study in the low SNR regime are algebraically structured mixture models. A prime example is 
the orbit retrieval model, also known as multi-reference alignment (MRA). In this class of models, a known algebraic constraint relates the centers of the mixture components to one another. Specifically, the centers are all determined from any one center by applying to it the elements of a subgroup of rotations on $\R^d$. In particular, the centers therefore all have the same norm. This class of models is motivated by problems arising in molecule imaging using Cryo-Electron Microscopy (cryo-EM). The goal is to infer the density of a molecule from noisy observations of it in different unknown orientations. At a first approximation the data can be described by a GMM in which the centers are constrained to be observations of the same (unknown) molecule from different viewing directions. We describe this model in more detail in Section~\ref{subsec:algmodel}. 

In a recent paper \cite{fan2020likelihood}\footnote{The authors learned of this work at an earlier stage of preparing the current manuscript, and have since leveraged insights of~\cite{fan2020likelihood} to help motivate and simplify some of our arguments. The derivation of the expansion in the case of general mixture models appears to require a different set of techniques and our arguments are quite different overall.}, the authors derive an asymptotic expansion for the log likelihood of the orbit retrieval model. Remarkably, the authors then leverage this expansion and the algebraic structure present in the orbit retrieval problem, to analyze the critical points of the log-likelihood landscape (via the critical points of the moment matching objectives \eqref{introeqn-2}). The expansion we derive in the more general context of GMMs reduces to that of \cite{fan2020likelihood} when the model is of the orbit retrieval type.
 While an analysis of the complexity of the moment-matching landscape in the general case is beyond the scope of this paper, the results in~\cite{fan2020likelihood} on the orbit retrieval model illustrate how such an analysis can be used to draw conclusions about the log likelihood landscape and maximum likelihood estimation. 

We note that our likelihood expansion applies to other important algebraically structured models as well, such as \emph{heterogeneous MRA}, in which the centers constitute the orbits of several points in $\R^d$ under a group action. Cryo-EM data can be modeled this way, since one often observes a molecule in several different conformations. The distinct orbits are then the rotations of these distinct conformations. 

The method of moments is a natural approach for inference in algebraically structured models in the low SNR regime, and a theoretical understanding of the method has been developed in this setting. With the help of our asymptotic expansion, we expect that some of this understanding can be transferred to draw conclusions about likelihood optimizing methods such as EM. 
We discuss this as well as potential implications of the expansion beyond algebraically structured models in Section~\ref{subsec:algmodel}.

We have alluded to the fact that in the model setting we study, EM is the same as gradient descent on the negative log likelihood. We make this precise in Section~\ref{sec:EM}. Specifically, we show that for finite mixtures and orbit retrieval models, both standard EM and a variant known as gradient EM, are given by gradient ascent on the log likelihood with respect to the centers of the mixture. This implies that an understanding of the likelihood landscape directly translates into an understanding of the fixed points of EM and their basins of attraction. However, we will also show that the standard EM algorithm is suboptimal in the low SNR regime, in that it corresponds to gradient descent with too small a step size. This was shown in~\cite{fan2020likelihood} for the orbit retrieval model. Thus gradient EM is a better option, since the step size is user-specified. 

We note that in order to use our expansion to draw conclusions about EM and maximum likelihood estimation, a finite sample analysis of the likelihood landscape is required. Here, we focus only on the population log likelihood. In~\cite{fan2020likelihood} concentration of the sample log likelihood and its first two derivatives around their population analogues is established for the orbit retrieval model. We also note that while our asymptotic expansion does not require the ground truth mixture weights to be known, we assume this is the case in our discussion of the consequences of the expansion. 
\subsection*{Acknowledgements} We would like to thank Jonathan Niles-Weed, Matthias Loeffler, and Justin Finkel for insightful discussions. We also thank Zhou Fan for pointing us to his paper.

\subsection*{Paper Organization} The paper is organized as follows. In Section~\ref{sec:main}, we introduce the general class of GMMs we will consider and some guiding example models. We then state our main result, the asymptotic expansion of the GMM log-likelihood in the low SNR regime.  In Section~\ref{sec:EM}, we show that for this class of GMMs, EM is the same as gradient ascent on the log likelihood with respect to the centers. We also apply the asymptotic expansion to draw conclusions about the EM algorithm and its variants in the low SNR regime. In Section~\ref{sec:examplesimplications}, we apply the expansion to several example models to draw conclusions about critical points of the corresponding log likelihood landscapes. We also discuss the implications of the expansion for models with algebraic structure motivated by the cryo-EM problem. In Section~\ref{sec:expansion} we present the proof of the expansion, deferring technical parts to the appendix. 

\subsection*{Notation}\label{subsec:notation}
For $x\in\R^d$, we let $g(x)$ denote the probability distribution function of the standard normal Gaussian $\mathcal N(0, I)$, 
$$g(x) = (\sqrt{2\pi})^{-d}e^{-\|x\|^2/2}.$$ For a set $K\subset\R^d$ and a point $x\in\R^d$, we define $K-x = \{y-x\mid y\in K\}$.  If $K$ is compact we define $\|K\|_{\infty}: = \sup_{x\in K}\|x\|$, where $\|x\|$ denotes the Euclidean norm of $x$.

For a probability measure $\rho$ on $\R^d$, we write $\mathrm{supp}(\rho)$ to denote its support. We write $\bmta\sim\rho$ to denote that $\bmta\in\R^d$ is a random variable with distribution $\rho$ (bold-font letters will always denote random variables). 
For $\bmta\sim\rho$ with $\rho$ compactly supported, we define $$\|\bmta\|_\infty =\|\rho\|_\infty = \|\mathrm{supp}(\rho)\|_{\infty}.$$ 

For the moment tensors of $\bmta\sim\rho,$ we write
$$T_k(\rho) = T_k(\bmta) = \E_{\bmta\sim\rho}\lb\bmta^{\otimes k}\rb = \int_{\R^d}x^{\otimes k}\rho(dx)\in\lp\R^d\rp^{\otimes k},$$ $k=1,2,3,\dots$. We use $T_{1:k}$ as shorthand for $T_1, \dots, T_k$. For two tensors $T, S\in\lp\R^d\rp^{\otimes k}$ with real entries, we let $\la T, S\ra$ denote the entry-wise inner product of their vectorizations in $\R^{d^k}$, and $\|T\| = \la T, T\ra^{1/2}.$

\section{Model Description and Main Theorem}\label{sec:main}






Let $Y\in\R^d$ be distributed according to a Gaussian mixture in which the component distributions have the same, nondegenerate covariance $\Sigma$. The assumption of equal covariances allows us to write $Y$ as a Gaussian perturbation $\Sigma^{\frac12}Z$ of a random variable $\bmta\in\R^d$ encoding the centers of the mixture components and the mixture weights. For example, if $Y$ is a uniform mixture of $K$ Gaussian distributions $\mathcal N(\mu_{j},\Sigma),\,j=1,\dots,K$, then $\bmta$ is a discrete random variable taking the value $\mu_{j}$ with probability $1/K$, $j=1,\dots, K$. In general, we have:
\beqs\label{mixture}Y&= \Sigma^{\frac12}Z + \bmta,\quad \bmta\sim\rho,\\
 Z&\sim\mathcal N(0, I),\;Z\indep\bmta.\eeqs If $\rho$ is a sum of point masses, then $Y$ is a discrete mixture of component distributions. If $\rho$ has a density, then $Y$ is a continuous mixture. 

We will consider maximum likelihood estimation of $\rho=\rho_*$ given independent identically distributed observations $y_i\sim Y,\;i=1,\dots, N$ in the case $N\to\infty$. The asymptotic expansion of the log likelihood presented in the next section is valid for the family of compactly supported measures $\rho$, and we therefore present it in this most general setting. Importantly, this general setting also includes the parametric framework in which it is known that $\rho_*$ belongs to a set parameterized by a finite number of variables. 

Note that if $\Sigma$ is known, then we can transform \eqref{mixture} into a mixture of spherical distributions by multiplying $Y$ by $\Sigma^{-\frac12}$. Thus, the case in which the component distribution covariances are known, equal, and nondegenerate, is equivalent to the model \beq\label{GMM}Y= \sg Z + \theta,\quad\bmta\sim\rho,\, Z\sim\mathcal N(0, I),\, Z\indep\bmta.\eeq We therefore assume the covariance is $\sg^2I$ from now on. (We do not set $\sg=1$ because it will be convenient to perform Taylor expansions in $1/\sg$). 

Now, the distribution $\rho$ induces a density $q_{\rho}(y)$ on $Y$. To compute $q_{\rho}$, note that
\beqs
\mathbb P(Y\in A) = \E_{\bmta\sim\rho}\lb\mathbb P(\sg Z + \bmta\in A\mid\bmta)\rb = \int_{A}\E_{\bmta\sim\rho}\lb g\lp\sg^{-1}(y-\bmta)\rp\rb dy.
\eeqs
This gives
\beqs
q_{\rho}(y) &= \E_{\bmta\sim\rho}\lb g(\sg^{-1}(y-\bmta))\rb\\
&=  (2\pi\sg)^{-\frac d2}\E_{\bmta\sim\rho}\lb \e\,\lp-\frac{\|y-\bmta\|^2}{2\sg^2}\rp\rb.
\eeqs
The population log likelihood $L(\rho;\;\rho_*)$ is then given by
\beqs
L(\rho;\,\rho_*) &= \E_{Y\sim q_{\rho_*}}\log\, q_{\rho}(Y) \\
&=\E_{Y\sim q_{\rho_*}}\log\,\E_{\bmta\sim\rho}\lb \e\,\lp-\frac{\|Y-\bmta\|^2}{2\sg^2}\rp\rb,\eeqs
 where we have discarded the normalization constant. Writing $Y=\sg Z + \bmta_*,\;\bmta_*\sim\rho_*$, we can also express the log likelihood in the following form:
 \beq
 L(\rho;\,\rho_*) = \E_{Z,\bmta_*\sim\rho_*}\log\,\E_{\bmta\sim\rho}\lb \e\,\lp-\frac{\|\sg Z + \bmta_*-\bmta\|^2}{2\sg^2}\rp\rb.
 \eeq
Abusing notation, we will sometimes write $L(\bmta;\,\bmta_*)$ for $L(\rho;\,\rho_*)$.  Note that $\rho=\rho_*$ is the unique global maximizer (up to measure zero) of $L$ in the space of probability distributions on $\R^d$. This is a consequence of the fact that $L(\rho;\;\rho_*) = -D_{\mathrm{KL}}(q_{\rho_*}||q_{\rho}) + \mathrm{const.},$ where $D_{\mathrm{KL}}$ is the Kullback-Leibler divergence between $q_{\rho_*}$ and $q_{\rho}$ and the constant term depends on $\rho_*$ only.

The GMM formulation \eqref{GMM} lends itself to the signal processing viewpoint of the statistical estimation problem. Namely, one can consider the observations $y_i$ as draws from the ``signal'' distribution $\rho$ corrupted by the additive noise $\sg Z$. This reasoning, as well as the likelihood expansion in the following section, motivate us to define the signal-to-noise ratio (SNR) as follows:
 \begin{defn}
Let $\rho$ be a compactly supported measure on $\R^d$. We define the SNR as
$$\mathrm{SNR}(\rho, \sg) =\|\mathrm{supp}(\rho) - T_1(\rho)\|^2_{\infty}/\sg^2.$$ 
\end{defn}
We note that this definition of SNR is not sensitive to how $\|\ta\|$ varies for $\ta\in\mathrm{supp}(\rho_*)$. For example, consider a discrete distribution $\rho_*$ concentrated on $\pm\ta_{1*},\pm\ta_{2*},$ where $\|\ta_{1*}\|\ll \|\ta_{2*}\|$. Then $\mathrm{SNR}(\rho_*,\sg)=\|\ta_{2*}\|/\sg$.  One could argue that the SNR should depend not just on $\|\ta_{2*}\|/\sg$ but also on how small $\|\ta_{1*}\|$ is relative to $\|\ta_{2*}\|$. 

However, we will see that for our purposes this is a natural definition of SNR. Indeed, it is is the scale parameter which emerges in the asymptotic expansion. The smaller this value, the more clear-cut the separation between successive moment-matching stages, as will be explained in Section~\ref{sec:mainresults}.


\subsection*{Guiding Examples}
It is helpful to keep in mind the following two classes of GMMs as examples of models to which the log likelihood expansion can be applied.  Both classes (i.e. families of measures $\rho$) can be parameterized by a finite number of variables, and we write the SNR and moments as functions of these parameters.\\

\noin\textbf{Discrete Finite Mixture Model.} This class of models can be described by $Y=\sg Z + \bmta\in\R^d$ where $\bmta\sim\rho$, a finite sum of point masses. In other words, $\rho$ is of the form 
\beq\rho(dx) = \sum_{j=1}^K\alpha_j\delta(x-\ta_j),\quad \ta_j\in\R^d,\alpha_j > 0, j=1,\dots, K, \;\sum_j\alpha_j=1.\eeq We have 
\beqs
T_k(\ta,\alpha) &= \sum_{j=1}^K\alpha_j\ta_j^{\otimes k}, k=1,2,\dots,\\
\mathrm{SNR}(\ta,\alpha, \sg) &= \max_{j=1,\dots, K}\|\ta_j - T_1(\ta,\alpha)\|^2/\sg^2,
\eeqs where $\ta,\alpha$ are shorthand for $(\ta_j,\alpha_j)_{j=1}^K$.  \\ \\

\noin\textbf{Orbit Retrieval.} Let $G\subset O(d)\subset \R^{d\times d}$ be a possibly infinite subgroup of the group of orthogonal rotations in $\R^d$. Let $\gamma$ be a measure on $G$, and ${\bf g}\in G$ denote the random variable with distribution $\gamma$. In the orbit retrieval model, we have  
\beq\label{orbitretrieval}Y= \sg Z + \bmta, \quad \mathrm{where}\;\bmta = {\bf g}\ta, \quad {\bf g}\sim\gamma,\; \ta\in\R^d.\eeq  
Here, ${\bf g}\ta$ denotes the action of ${\bf g}$ on $\ta$, in this case multiplication by a matrix. Note that $\ta\in\R^d$ is deterministic. 

In general, both the point $\ta$ whose orbit under $G$ constitutes the centers of the GMM, and the distribution $\gamma$, can be unknown. We have
\beqs
T_k(\ta, \gamma) &= \E_{{\bf g}\sim\gamma}\lb ({\bf g}\ta)^{\otimes k}\rb,\;k=1,2,\dots,\\
\mathrm{SNR}(\ta,\gamma,\sg)&=\sup_{g\in G}\|g\ta - T_1(\ta,\gamma)\|^2/\sg^2.
\eeqs

The term orbit retrieval is also sometimes used to denote the model in which $\gamma$ is known and given by the Haar measure (the uniform distribution on $G$).  Due to the invariance of the Haar measure under the action of $G$, we have $T_1= gT_1$ for any $g\in G$, so that the SNR is given by $\mathrm{SNR}(\ta,\gamma=\mathrm{Haar},\sg)=\|\ta - T_1\|^2/\sg^2.$

An example of a discrete orbit retrieval model is Multireference alignment (MRA). Here, $G= \{g_0,\dots, g_{d-1}\}$ is the group which acts on vectors in $\R^d$ by cyclically shifting their entries. In other words, we have
$$(g_j\ta)_k = \ta_{j+k\mod d},\quad j,k=0,\dots, d-1.$$ The measure $\gamma$ is therefore a sum of point masses, and induces the following distribution on $\bmta$:
$$\bmta\sim\rho,\quad \rho(dx) = \sum_{j=1}^K\gamma_j\delta(x-g_j\ta).$$


As an example of a continuous mixture, consider rotations in $\R^2$, distributed uniformly over angles of rotation $\omega\in[0,2\pi)$. Then the random variable $\bmta\in\R^2$ is distributed as
$$\bmta =\bpm \cos\omega & -\sin\omega\\ \sin\omega & \cos\omega\epm\ta, \; \omega\sim\mathrm{Unif}[0,2\pi), \quad\ta\in\R^2.$$

\subsection{Main results}\label{sec:mainresults}

In this section we state our main result, the asymptotic expansion of the log likelihood function. Recall that the log likelihood is given by
 \beq\label{Lrhorhostar}
 L(\bmta;\,\bmta_*,\sg) = \E_{Z,\bmta_*\sim\rho_*}\log\,\E_{\bmta\sim\rho}\lb \e\,\lp-\frac{\|\sg Z + \bmta_*-\bmta\|^2}{2\sg^2}\rp\rb.
 \eeq
\begin{thm}\label{mainthmsimple} Let $\bmta\sim\rho$ and $\bmta\sim\rho_*$ be compactly supported random variables on $\R^d$ and define $\delta =\delta(\rho, \rho_*)= \max\{\|\ta-\ta_*\|\mid \ta\in \mathrm{supp}(\rho), \ta_*\in\mathrm{supp}(\rho_*)\}.$ Let $m$ be a positive integer. If $$T_k(\bmta)=T_k(\bmta_*),\quad k=1, \dots, m-1$$ then for any $\sg>0$ we have:
\beqs\label{maineqnsimple}
-L(\bmta;\,\bmta_*,\sg) = &C_m(\bmta_*) + \sg^{-2m}\frac{1}{2(m!)}\|T_m(\bmta) - T_m(\bmta_*)\|^2  + \epsilon_m,
\eeqs
where $C_m(\bmta_*)$ is independent of $\bmta$ and
the error term $\epsilon_m = \epsilon_m(\bmta, \bmta_*)$ is bounded above by \beq\label{error}
|\epsilon_m(\bmta, \bmta_*)| \leq (m+1)! \lp C\frac{\delta}{\sg}\rp^{2m+2}\lp 1\vee \frac{\delta}{\sg}\rp^{2m+2},\eeq
where $C$ is a $d$-dependent absolute constant. 
\end{thm}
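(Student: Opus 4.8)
The plan is to strip the log-likelihood down to a single $\rho$-dependent functional, expand it as a power series in $\sg^{-2}$, and then show that the moment-matching hypothesis collapses every term of order below $\sg^{-2m}$ into the $\bmta$-independent constant $C_m(\bmta_*)$. First I would discard $\rho$-independent pieces. Writing $q_\rho$ for the density of $Y$ and $g_\sg$ for the $\mathcal N(0,\sg^2 I)$ density, we have $-L(\bmta;\bmta_*,\sg) = -\E_{Y\sim q_{\rho_*}}[\Phi(Y)] + \mathrm{const}(\rho_*)$, where $\Phi(y) = \log\, \E_{\bmta\sim\rho}[e^{(\la y,\bmta\ra - \frac12\|\bmta\|^2)/\sg^2}] = \log(q_\rho/g_\sg)(y)$; the subtracted entropy $-\E[\log g_\sg]$ depends on $\rho_*$ only. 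Next I would observe that the whole identity is invariant under a common translation of $\mathrm{supp}(\rho)$ and $\mathrm{supp}(\rho_*)$: the likelihood is (a shift only displaces $Y$), $\delta$ is, and -- crucially, using that the first $m-1$ moments agree -- the difference $T_m(\bmta)-T_m(\bmta_*)$ is as well. Hence without loss of generality I may center so that $\|\bmta\|_\infty,\|\bmta_*\|_\infty\lesssim\delta$, which is what ultimately forces the error to depend on $\delta/\sg$ alone.

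The heart of the argument is the expansion of $\Phi(y)$. Fixing $y$ and setting $a(\bmta)=(\la y,\bmta\ra-\frac12\|\bmta\|^2)/\sg^2$, I would Taylor expand $\phi(t)=\log\,\E_\bmta[e^{t\,a(\bmta)}]$ about $t=0$ with integral remainder, giving $\Phi(y)=\phi(1)=\sum_{n=1}^{2m}\kappa_n(a)/n! + R_{2m}(y)$, where $\kappa_n(a)$ is the $n$-th cumulant of $a(\bmta)$ under $\rho$ and $R_{2m}$ integrates the $(2m{+}1)$-st tilted cumulant. Each $\kappa_n(a)$ is a polynomial in the moment tensors $T_k(\bmta)$ contracted against powers of $y$ and $\sg^{-2}$. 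I would then take $\E_{Y\sim q_{\rho_*}}$ term by term, writing $Y=\sg Z+\bmta_*$ and evaluating the Gaussian integrals over $Z$ by Wick's rule; the clean identity $\E_Z[e^{\la Z,v\ra/\sg}]=e^{\|v\|^2/2\sg^2}$ and its polynomial analogues turn each term into a contraction of the tensors $T_k(\bmta)$ and $T_k(\bmta_*)$ times a power of $\sg^{-1}$. Since every $Z$ enters $a$ with one factor of $\sg^{-1}$ and every $\bmta_*$ or $\|\bmta\|^2$ with $\sg^{-2}$, and odd powers of $Z$ integrate to zero, only even powers $\sg^{-2j}$ survive -- so the expansion is genuinely in $\sg^{-2}$ and the first neglected order is $\sg^{-(2m+2)}$, matching the stated error.

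A short computation then identifies the leading $\bmta$-dependent coefficient. Conceptually, the $Z$-average of $q_\rho/g_\sg$ equals $\E_\bmta[e^{\la\bmta_*,\bmta\ra/\sg^2}]$, whose logarithm contributes the cross term $-\la T_m(\bmta),T_m(\bmta_*)\ra/(m!\,\sg^{2m})$ at order $\sg^{-2m}$ (using that $\kappa_m(\bmta)=T_m(\bmta)$ up to lower, matched moments), while the $Z$-variance of $q_\rho/g_\sg$, governed by $\E_{\bmta,\bmta'}[e^{\la\bmta,\bmta'\ra/\sg^2}]$ with $\bmta,\bmta'$ i.i.d.\ $\sim\rho$, contributes $+\|T_m(\bmta)\|^2/(2\,m!\,\sg^{2m})$. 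Completing the square and absorbing the $\bmta$-independent $\|T_m(\bmta_*)\|^2$ term into $C_m(\bmta_*)$ gives exactly $\frac{1}{2(m!)}\sg^{-2m}\|T_m(\bmta)-T_m(\bmta_*)\|^2$. The key structural step is to verify that every term of order $\sg^{-2j}$ with $j<m$ is $\bmta$-independent: each is a contraction involving only moments $T_k(\bmta)$ with $k\le j<m$, and since these equal $T_k(\bmta_*)$ by hypothesis, each such factor may be replaced by the constant $T_k(\bmta_*)$ and the whole term folded into $C_m(\bmta_*)$; the same replacement shows that at order $\sg^{-2m}$ only the single factor carrying $T_m(\bmta)$ remains $\bmta$-dependent.

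I expect this bookkeeping -- cataloguing which tensor contractions appear at each order and checking the cancellation -- together with the remainder estimate, to be the main obstacle. For the error I would bound $\E_{Y\sim q_{\rho_*}}[R_{2m}]$ and the dropped higher-order terms using that $|\Phi(y)|$ and the tilted cumulants grow only polynomially in $\|y\|$ (e.g.\ $|\Phi(y)|\lesssim \|y\|\,\delta/\sg^2 + \delta^2/\sg^2$), so they are integrable against the Gaussian law of $Z$. The polynomial degree $\sim 2m$ and the cumulant combinatorics are what produce the factor $(m+1)!$, while in the non-small regime the crude bound $|a(\bmta)|\lesssim(\|Z\|+\delta/\sg)\,\delta/\sg$ forces the extra $(1\vee\delta/\sg)^{2m+2}$ factor; controlling the tilted $(2m{+}1)$-st cumulant uniformly in $t\in[0,1]$ (where the tilt can move mass) is the most delicate piece of the estimate.
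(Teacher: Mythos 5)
Your high-level skeleton (expand the log likelihood ratio in powers of $1/\sg$, use the moment-matching hypothesis to fold all orders below $\sg^{-2m}$ into $C_m(\bmta_*)$, then bound the remainder) does match the paper's strategy, and your centering observation and your identification of the $\sg^{-2m}$ coefficient are sound. But there is a genuine gap at the step you yourself label as the key structural one. You claim that each term of order $\sg^{-2j}$ in your expansion is a contraction involving only moments $T_k(\bmta)$ with $k\le j$. For the expansion you set up this is false term by term. Because you keep the exponent $a(\bmta)=\la y,\bmta\ra/\sg^2-\|\bmta\|^2/2\sg^2$ \emph{quadratic} in $\bmta$, the cumulants $\kappa_n(a)$ contain terms in which the factor $\|\bmta\|^2/2\sg^2$ appears repeatedly: already $\kappa_2(a)$ contains $\mathrm{Var}_{\bmta}\lp\|\bmta\|^2\rp/4\sg^4$, a contraction of $T_4(\bmta)$ sitting at order $\sg^{-4}$, and the cross-covariance term puts $T_3(\bmta)$ (contracted against $T_1^*$) at order $\sg^{-4}$ as well; in general $\bmta$-moments of order up to $2j$ appear at order $\sg^{-2j}$. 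Take $m=3$: at $j=2<m$ the hypothesis matches only $T_1,T_2$, so these $T_3(\bmta)$ and $T_4(\bmta)$ terms cannot be individually replaced by constants and absorbed into $C_m(\bmta_*)$. What is true---and is precisely the content of the paper's Lemma~\ref{mainthm}---is that after summing \emph{all} contributions at a given order, the dependence on $T_k(\bmta)$ for $k>j$ cancels. Proving that cancellation is the core of the theorem, not bookkeeping, and your proposal contains no mechanism that produces it.

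The paper's mechanism is worth naming, because it is exactly what your route is missing: it linearizes the exponent \emph{before} expanding, writing $\e\lp-t^2\|w\|^2/2\rp=\E_{Z'}\lb \e\lp itw^TZ'\rp\rb$ with $w=\ta-\ta_*$ and an independent Gaussian $Z'$, so the likelihood becomes $\E\log\E_{\ta,Z'}\,\e\lp t\,w^T(Z+iZ')\rp$ at $t=1/\sg$. Every cumulant coefficient is then a contraction of tensors $\E_\ta\lb w^{\otimes\ell}\rb$ against Gaussian moments of $W=Z+iZ'$, and circular symmetry of the complex Gaussian (Proposition~\ref{momentprop}, used in Proposition~\ref{EZW}) forces $\E\lb\bigotimes_{\ell\in\lambda}(Z+iZ_\ell)^{\otimes\ell}\rb=0$ whenever the largest $\ell$ in $\lambda$ exceeds half the total order. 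That vanishing is exactly what guarantees that no $\bmta$-moment of order $>k$ survives at order $\sg^{-2k}$, which is the fact you assumed. If you want to stay with a purely real expansion you would need an equivalent orthogonality input---for instance expanding the likelihood ratio in Hermite tensor polynomials, $q_\rho/g_\sg(y)=1+\sum_{k\ge1}\la H_k(y/\sg),T_k\ra\sg^{-k}/k!$, whose Gaussian orthogonality plays the same role---together with a combinatorial control such as the bound $\sum_{\lambda\in S_n}|c_\lambda|\le n^n$ on cumulant coefficients, which the paper uses to get the stated $(m+1)!\lp C\delta/\sg\rp^{2m+2}\lp1\vee\delta/\sg\rp^{2m+2}$ error and which your ``polynomial growth'' remainder sketch does not by itself yield.
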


From \eqref{Lrhorhostar} it is clear that $L(\bmta;\,\bmta_*,\sg) = L(\bmta-c;\,\bmta_*-c,\sg)$ for any constant $c$. Note that $\delta$ is also invariant to shifts of $\bmta$ and $\bmta_*$ by the same amount. Thus,~\eqref{maineqnsimple} remains true if we substitute $\bmta-c,\bmta_*-c$ on the right hand side. However, the size of $\|T_m(\bmta) - T_m(\bmta_*)\|$ is on the order $\delta\lp\|\bmta\|_\infty\vee\|\bmta_*\|_\infty\rp^{m-1}$.  It is therefore not invariant to shifts. It will be desirable for $\delta$ to be of the same scale as $\|\bmta\|_\infty\vee\|\bmta_*\|$. In order to accomplish this, we will replace $\bmta$ and $\bmta_*$ by $\bmta-T_1^*$ and $\bmta_*-T_1^*$, respectively. From now on we will let $\bmta$, $\bmta_*$ denote these shifted random variables (i.e. assume $T_1^*=0$). 

We also have $L(\bmta;\,\bmta_*,\sg) = L(\lambda\bmta;\,\lambda\bmta_*,\lambda\sg)$ for any $\lambda >0$.  We will therefore set $\|\bmta_*\|_\infty  = 1$ in addition to assuming $T_1^*=0$. The ground truth SNR is then given by $\mathrm{SNR}(\rho_*, \sg) = 1/\sg^2$ and  the low SNR regime is characterized by $\sg\to\infty$. Note that we have the upper bound $$\delta \leq\|\bmta\|_{\infty}\vee 1.$$

\noin\textbf{Discussion.} 
Suppose the GMM lies in a parameterizable family $$\{Y=\sg Z + \bmta,\;\bmta\sim\rho_\ta\mid\ta\in\Theta\},$$ with $\Theta$ a set in a finite dimensional space. This allows us to consider gradient based local search algorithms for likelihood optimization in $\Theta$. Theorem~\ref{mainthmsimple} shows that in the low SNR regime $\sg\to\infty$, any such algorithm attempts to match the moments of $\rho = \rho_{\ta}$ to those of $\rho_* = \rho_{\ta_*}$ one by one, starting from the first moment. In other words, likelihood optimization reduces to the sequence of minimization problems 
\beq\label{momentmatchingI}\min_{\ta\in\mathcal V_{m-1}}\|T_{m}(\ta) - T_{m}(\ta_*)\|^2, \; m=1,2,3,\dots,\eeq where $\mathcal V_0 = \Theta$ and $\mathcal V_k\subset \Theta, k=1,2,\dots$ are the varieties
\beq\label{varietiesI}\mathcal V_k = \{\ta\in\Theta\mid T_\ell(\ta) = T_\ell(\ta_*),\;l=1,\dots, k\}.\eeq
This is a consequence of the fact that there is a scale separation between $\|T_m - T_m^*\|^2/\sg^{2m}$ and $\epsilon_m$. Indeed, provided $\|\bmta\|_\infty=\mathcal O(1)$ relative to $\sg$, the former is on the order $\sg^{-2m}$ and the latter is on the order $\sg^{-2m-2}$. 

Consider \eqref{maineqnsimple} when $m=1$. Due to this scale separation, the  algorithm will prioritize  minimization of $\|T_1(\ta) - T_1(\ta_*)\|^2$ over that of $\epsilon_1$. If the minimization is successful, $\ta$ will reach the variety $\mathcal V_1$. On this variety, the objective function to be minimized is now $\|T_2(\ta) - T_2(\ta_*)\|^2/4\sg^4$ to highest order. The algorithm will continue to step through these distinct minimization stages for $m=1,2,\dots$, provided it does not get stuck in a local minimum or saddle point $\ta$ of $\|T_m(\ta) - T_m(\ta_*)\|^2\big\vert_{\mathcal V_{m-1}}$, i.e. a critical point for which $T_m(\ta)\neq T_m(\ta_*)$.

This suggests an intimate connection between likelihood optimizing algorithms such as EM and the method of moments in the low SNR regime. 
The connection between these two classes of algorithms will be discussed further in Section~\ref{sec:examplesimplications}. 

For nonparametric GMMs in which there is no knowledge of $\rho_*$ beyond the compact support assumption, the asymptotic expansion of the log likelihood reduces to a sequence of minimization problems in the space of measures, i.e. $$\min_{\rho\in\mathcal V_{m-1}}\|T_m(\rho) - T_m(\rho_*)\|^2,$$ where $\mathcal V_{m-1}$ is defined analogously to the parametrizable case. We note that the moments are linear in $\rho$, so that the objective function is quadratic and the varieties are given by linear constraints. The sequence of least squares moment matching problems is therefore a quadratic programming problem, albeit in an infinite dimensional space. While an analysis of the non-parametric setting is outside the scope of this paper, it would be interesting to explore the connection between the method of moments and maximum likelihood estimation in this context. For results on maximum likelihood estimation and inference in non-parametric mixture models, see e.g.~\cite{saha2020,feng2018,laird78}.\\

Theorem~\ref{mainthmsimple} is a direct consequence of the following key Lemma. To state it we will need the following two definitions.
\begin{defn} Let $$T_{k_i} = \E_{\bmta\sim\rho_i}\lb\bmta^{\otimes k_i}\rb, \quad i=1,\dots, n,$$ i.e. $T_{k_i}$ is the order $k_i$ moment tensor of some distribution $\rho_i$. Consider $$T = \bigotimes_{i=1}^n T_{k_i}=T_{k_1}\otimes T_{k_2}\otimes\dots\otimes T_{k_n}.$$ We define the \textbf{total moment order} of $T$ to be $\sum_{i=1}^n k_i,$ i.e. the sum of all moment orders. We also say that the total moment order of each entry of $T$ is $\sum_{i=1}^n k_i$; in other words, the total moment order of products of entries of moment tensors is the sum of all moment orders in the product. 
\end{defn}
Let $\rho$ and $\rho_*$ be compactly supported measures on $\R^d$. In the definition and lemma below, we write $T_k, T_k^*$ as shorthand for $T_k(\rho), T_k(\rho_*)$, respectively. 
\begin{defn}\label{VkRk}
We define $V_k\lb T_{1:m}, T_{1:n}^*\rb$ as the set of all constant coefficient linear combinations of outer products of moment tensors $T_j, j\leq m,\, T_\ell^*,\ell\leq n$, of total moment order $k$.\\ \\
We define $R_k\lb T_{1:m}, T_{1:n}^*\rb$ as the set of all constant coefficient linear combinations of products of entries of moment tensors $T_j, j\leq m,\, T_\ell^*,\ell\leq n$, of total moment order $k$.
\end{defn}
\begin{lemma}\label{mainthm} Let $\rho$ and $\rho_*$ be compactly supported probability measures on $\R^d$.  For all $m=1,2,3,\dots$ we have
\beqs\label{maineqn}
-L(\rho;\,\rho_*) = &C(\rho_*) + \frac12\|T_1-T_1^*\|^2\sg^{-2} \\
&+ \sum_{k=2}^m\lp\frac{1}{2(k!)}\|T_k - T_k^*\|^2 + \lla T_k, Q_k\rra + r_k\rp \sg^{-2k} + \epsilon_m,
\eeqs
where $C(\rho_*)$ is independent of $\rho$, and 
\begin{align*}
Q_k &= Q_k(T_{1:k-1}, T_{1:k-1}^*)\in V_{k}\lb T_{1:k-1}, T_{1:k-1}^*\rb,\\
r_k &= r_k(T_{1:k-1}, T_{1:2k}^*)\in R_{2k}\lb T_{1:k-1}, T_{1:2k}^*\rb.
\end{align*} 
Moreover, $Q_k$ is such that
$$Q_k(T_{1:k-1}^*, T_{1:k-1}^*) = 0.$$ The error term $\epsilon_m = \epsilon_m(\rho, \rho_*)$ is the same as in \eqref{maineqnsimple}. 
\end{lemma}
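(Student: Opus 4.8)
The plan is to collapse the population log likelihood into a single Gaussian expectation, expand it in powers of $t=1/\sg$, and read off the coefficients. Writing $x=Y/\sg = Z+\bmta_*/\sg$ and pulling the factor $g(x)$ out of the Gaussian density, I would first record the reduction
\[
-L(\rho;\rho_*) = C(\rho_*) - \E_{x}\lb \log M(x)\rb,\qquad M(x):=\E_{\bmta\sim\rho}\lb \e\lp \sg^{-1}\la x,\bmta\ra - \tfrac{1}{2\sg^2}\|\bmta\|^2\rp\rb,
\]
where $C(\rho_*)$ absorbs the $\rho$-independent contributions (the normalization and $\E_x\lb\|x\|^2/2\rb$). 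The crucial observation is that the density of $x$ is exactly $g(x)M_*(x)$, where $M_*$ is $M$ with $\rho$ replaced by $\rho_*$: indeed $g(x-\ta'/\sg)=g(x)\,\e\lp\sg^{-1}\la x,\ta'\ra-\tfrac{1}{2\sg^2}\|\ta'\|^2\rp$, and averaging over $\ta'\sim\rho_*$ gives $g(x)M_*(x)$. Hence the log likelihood becomes the clean Gaussian expectation
\[
-L(\rho;\rho_*) = C(\rho_*) - \E_{Z\sim\mathcal N(0,I)}\lb M_*(Z)\log M(Z)\rb .
\]

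\textbf{Expansion and bookkeeping.} Expanding the inner exponential yields $M(x)=1+\sum_{p\ge1}\sg^{-p}\la B_p(x),T_p\ra$ with $B_p(x)=\sum_{a+2b=p}\tfrac{(-1/2)^b}{a!\,b!}\,x^{\otimes a}\otimes I^{\otimes b}$, and likewise for $M_*$ with $T_p^*$. I would substitute into $\log M=\sum_{j\ge1}\tfrac{(-1)^{j-1}}{j}(M-1)^j$, multiply by $M_*$, and take $\E_Z$ via Wick's theorem, collecting by total power of $1/\sg$. Two bookkeeping facts organize everything: (i) $B_p(Z)$ has $Z$-degree of the same parity as $p$, so the $Z$-count of a $\sg^{-p}$ monomial is $\equiv p\pmod 2$ and only even powers $\sg^{-2k}$ survive $\E_Z$; (ii) each tensor $T_p$ (resp. $T_q^*$) contributes its order to both the $\sg$-power and the total moment order while the $Z$'s and $I$'s contribute zero to both, so the $\sg^{-2k}$ coefficient has total moment order $2k$. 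The quadratic-in-$T_k$ part arises only from the $j=2$ term $-\tfrac12\la B_k,T_k\ra^2$, whose top Wick contraction is $\tfrac1{2(k!)}\|T_k\|^2$; the $\la T_k,T_k^*\ra$ cross term arises only from the order-$k$ term of $M_*$ times the $j=1$ term, with coefficient $\tfrac1{k!}$. Completing the square and absorbing the $\rho$-independent $\|T_k^*\|^2$ into $C(\rho_*)$ (equivalently into $r_k\in R_{2k}$) produces $\tfrac1{2(k!)}\|T_k-T_k^*\|^2$. By (ii), every remaining term linear in $T_k$ carries a cofactor of moment order $k$ built only from moments of order $\le k-1$ (an order-$k$ unstarred or starred moment would be the square/cross term already extracted), giving $\la T_k,Q_k\ra$ with $Q_k\in V_k\lb T_{1:k-1},T_{1:k-1}^*\rb$; the terms free of $T_k$ assemble into $r_k\in R_{2k}\lb T_{1:k-1},T_{1:2k}^*\rb$, using that $M_*$ supplies at most one starred moment per monomial.

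\textbf{The vanishing identity.} The statement $Q_k(T_{1:k-1}^*,T_{1:k-1}^*)=0$ is where the Gaussian master formula pays off. Differentiating $-L$ with respect to the formal tensor $T_k$ and using $\partial_{T_k}M=\sg^{-k}B_k$ gives $\partial_{T_k}(-L)=-\sg^{-k}\E_Z\lb M_*(Z)B_k(Z)/M(Z)\rb$. Evaluating at $\rho=\rho_*$ cancels $M_*/M$ and leaves $-\sg^{-k}\E_Z\lb B_k(Z)\rb$, a fixed tensor times the single power $\sg^{-k}$. Hence the $\sg^{-2k}$ coefficient of $\partial_{T_k}(-L)\big|_{\rho=\rho_*}$ vanishes for $k\ge1$; but that coefficient is $\partial_{T_k}$ of the $\sg^{-2k}$ coefficient, namely $\tfrac1{k!}T_k^*-\tfrac1{k!}T_k^*+Q_k(T^*,T^*)=Q_k(T^*,T^*)$, so $Q_k(T^*,T^*)=0$ as a tensor, as claimed.

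\textbf{The error term and the main obstacle.} With $t=1/\sg$, the quantity $\E_Z\lb M_*(Z;t)\log M(Z;t)\rb$ is a smooth function of $t$ whose Taylor expansion at $t=0$ reproduces the coefficients above; by the parity in (i) the odd derivatives vanish, so the first omitted order is $t^{2m+2}$. The genuinely technical step — the one I expect to be the main obstacle and would isolate as a separate appendix lemma — is the uniform bound on the $(2m+2)$-th derivative (equivalently the Lagrange remainder) that yields~\eqref{error}. The difficulty is that $\log M(Z)$ carries the factor $1/M(Z)$ and, although $\|\bmta\|$ is bounded by $\delta$ so the exponents are controlled, $Z$ ranges over all of $\R^d$; one must estimate $\log M$ and its $t$-derivatives against the Gaussian weight while tracking both the combinatorial factor $(m+1)!$ coming from the composition (Taylor) expansion of $\log$ and the scaling $\lp C\delta/\sg\rp^{2m+2}\lp 1\vee \delta/\sg\rp^{2m+2}$. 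Once this remainder estimate is in hand, assembling the coefficients from the previous paragraphs gives exactly~\eqref{maineqn}.
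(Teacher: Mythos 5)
Your opening reduction is correct and is a genuinely different route from the paper's: you keep the noise real and write $-L(\rho;\rho_*)=C(\rho_*)-\E_{Z}\lb M_*(Z)\log M(Z)\rb$, whereas the paper complexifies the noise, writing $L=-\tfrac d2+\E_{\ta_*,Z}\log\E_{\ta,Z'}\lb\e\lp\sg^{-1}(\ta-\ta_*)^T(Z+iZ')\rp\rb$ and extracting coefficients through moment--cumulant polynomials. But your Wick bookkeeping has a gap exactly where the paper's complexification does its work. When you expand $\log M$ and take real-Gaussian expectations of products $\E_Z\lb\prod_i\la B_{p_i}(Z),T_i\ra\rb$, raw Wick pairings include self-contractions, i.e.\ pairings joining two $Z$-indices attached to the same moment tensor; these produce partial-trace terms such as $\la T_k, I\otimes A\ra$, which are linear in $T_k$ but are \emph{not} of the form $\la T_k,Q_k\ra$ with $Q_k\in V_k\lb T_{1:k-1},T_{1:k-1}^*\rb$ (the identity is not a moment tensor), and cannot be absorbed into $r_k$ either, since $r_k$ may not contain entries of $T_k$. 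The lemma is true only because all such trace terms cancel, and your bookkeeping facts (i)--(ii) track parity and moment order but say nothing about this. The cancellation is precisely the diagram formula for Hermite polynomials --- your $B_p$ are Hermite tensors, and the expectation of a product of Hermite-contracted tensors is a sum over pairings with \emph{no self-loops} --- but you never state or use it, and you need it not only for the pairwise orthogonality $\E\lb\la B_k,S\ra\la B_k,T\ra\rb=\tfrac1{k!}\la S,T\ra$ behind your square and cross terms, but for products of three or more factors. In the paper this difficulty is removed structurally: $W=Z+iZ'$ satisfies $\E\lb WW^T\rb=0$ by circular symmetry, so self-contractions vanish identically, and the circular-symmetry proposition (Proposition 5.6) then pins down exactly which cumulant terms survive.

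The second gap is the error bound, which is part of the statement and which you explicitly defer. It is not a routine remainder estimate: the paper proves it by writing $\partial_t^nf=\sum_{\lambda\in S_n}c_\lambda\prod_{\ell\in\lambda}\partial_t^\ell M/M$ (the generalized cumulant identity), deriving $\left|\partial_t^\ell M/M\right|\le\delta^\ell\,\E_{Z'}(\|W\|+|t|\delta)^\ell$ via a re-expansion of $M$ about a shifted base point, and controlling the combinatorics with $\sum_{\lambda\in S_n}|c_\lambda|\le n^n$; that is what yields the specific form $(m+1)!\lp C\delta/\sg\rp^{2m+2}\lp1\vee\delta/\sg\rp^{2m+2}$ valid for \emph{every} $\sg>0$. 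Two smaller points: your coefficient extraction by substituting into $\log(1+u)=\sum_j(-1)^{j-1}u^j/j$ and exchanging $\E_Z$ with the series needs justification, since $|M(Z)-1|<1$ fails for large $\|Z\|$ (the paper works pointwise within a $Z$-dependent radius and uses Lagrange's remainder); and your argument that $Q_k(T_{1:k-1}^*,T_{1:k-1}^*)=0$ via $\partial_{T_k}(-L)\big|_{\rho=\rho_*}=-\sg^{-k}\E_Z\lb B_k(Z)\rb=0$ is an attractive alternative to the paper's explicit generating-function computation, but to match coefficients order by order you must also control the $T_k$-derivative of the remainder (or set up the identity at the level of formal series), which your sketch does not do.
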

The expansion \eqref{maineqn} generalizes the log likelihood series expansion (4.10) of \cite{fan2020likelihood}, which is specific to the orbit recovery model \eqref{orbitretrieval} in which the measure $\gamma$ on the group $G$ is the Haar (uniform) measure.  We note that our error bound decays as $(1/\sg)^{2m+2}$ when $\|\bmta\|_\infty = \mathcal O(1)$ as $\sg\to\infty$; this is a somewhat tighter bound than that of \cite{fan2020likelihood}, in which the error is shown to decay as $(\log\sg/\sg)^{2m+2}$ when $\sg\to\infty$ and $\|\ta\|/\sg = o(1/\log\sg)$. (Note that $\|\bmta\|_{\infty} = \|\ta\|$ for the orbit retrieval model). 

The expansion (4.10,\cite{fan2020likelihood}) is the same as \eqref{maineqn} except that (4.10) has no term of the form $\la T_k, Q_k\ra$. The following proposition explains why this is so. For the proof, see Proposition~\ref{app:orbit-recovery} in the appendix. 
\begin{prop} Let $\ta,\ta_*\in\R^d$, and $G \subset O(d)\subset \R^{d\times d}$ be a group. Define the random variable ${\bf g}\in G$ distributed according to ${\bf g}\sim\gamma,$ where $\gamma$ is the Haar measure on $G$. Let $T_k, T_k^*$ be the moment tensors of the distributions ${\bf g}\ta,\;{\bf g}\ta_*,\;{\bf g}\sim\gamma$, i.e.
\beq\label{orbit-dist}T_k = \E_{{\bf g}\sim\gamma}\lb\lp{\bf g}\ta\rp^{\otimes k}\rb,\quad T_k^* = \E_{{\bf g}\sim\gamma}\lb\lp{\bf g}\ta_*\rp^{\otimes k}\rb.\eeq Then for every tensor $Q \in V_k\lb T_{1:m}, T_{1:n}^*\rb$, we have
$$\lla T_k, Q\rra \in R_{2k}\lb T_{1:m}, T_{1:n}^*\rb.$$ In particular, the inner product $\la T_k, Q\ra$ depends only on moment tensors $T_j,\,j=1\dots, m$ even if $k > m$. 
\end{prop}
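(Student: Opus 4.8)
The plan is to exploit the single structural feature that distinguishes the Haar case: the moment tensors are invariant under the diagonal action of $G$. Indeed, by left-invariance of $\gamma$, for every $h\in G$ we have
\[ h^{\otimes k}T_k = \E_{{\bf g}\sim\gamma}\lb (h{\bf g}\ta)^{\otimes k}\rb = T_k, \]
and likewise $h^{\otimes k}T_k^* = T_k^*$. I would use this invariance twice: once to collapse an otherwise free group integral coming from $T_k$, and once to rewrite the resulting scalars purely in terms of moment-tensor entries. Throughout, $T_k=\E_{g_0}\lb(g_0\ta)^{\otimes k}\rb$ is a symmetric tensor, a fact I will lean on to ignore index orderings.

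By linearity of $\lla T_k,\cdot\rra$ it suffices to treat a single outer-product term $Q = T_{j_1}\otimes\cdots\otimes T_{j_a}\otimes T_{\ell_1}^*\otimes\cdots\otimes T_{\ell_b}^*$ with $\sum_i j_i + \sum_p \ell_p = k$, $j_i\le m$, $\ell_p\le n$; any permutation of the $k$ tensor slots of $Q$ leaves $\lla T_k,Q\rra$ unchanged because $T_k$ is symmetric. Writing each factor of $Q$ as an expectation over its own independent Haar element and using the rank-one factorization
\[ \lla (g_0\ta)^{\otimes k},\, v_1\otimes\cdots\otimes v_k\rra = \prod_{s=1}^k \la g_0\ta, v_s\ra, \]
where $v_1,\dots,v_k$ runs through $j_1$ copies of $g_1\ta$, and so on down to $\ell_b$ copies of $h_b\ta_*$, I can perform the expectation over each inner group element separately (each appears in exactly one factor) to obtain
\[ \lla T_k, Q\rra = \E_{g_0}\lb\prod_{i=1}^a \lla (g_0\ta)^{\otimes j_i}, T_{j_i}\rra \prod_{p=1}^b \lla (g_0\ta)^{\otimes \ell_p}, T_{\ell_p}^*\rra\rb. \]

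The two invariance steps then finish the computation. First, since $g_0^{\otimes j}T_j = T_j$ and $g_0^{\otimes j}$ is orthogonal, each factor satisfies $\lla (g_0\ta)^{\otimes j}, T_j\rra = \lla \ta^{\otimes j}, T_j\rra$ independently of $g_0$, so the outer expectation is trivial. Second, a short invariance calculation (substituting $g'=gu$ and using $\la g\ta, g'\ta'\ra = \la\ta, g^{-1}g'\ta'\ra$ with $g^{-1}g'$ again Haar) yields the identities $\lla \ta^{\otimes j}, T_j\rra = \|T_j\|^2$ and $\lla \ta^{\otimes\ell}, T_\ell^*\rra = \lla T_\ell, T_\ell^*\rra$. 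Substituting gives
\[ \lla T_k, Q\rra = \prod_{i=1}^a \|T_{j_i}\|^2 \prod_{p=1}^b \lla T_{\ell_p}, T_{\ell_p}^*\rra, \]
a constant-coefficient product of moment-tensor entries of total moment order $\sum_i 2j_i + \sum_p 2\ell_p = 2k$, hence an element of $R_{2k}$ in the sense of Definition~\ref{VkRk}; extending by linearity over the terms of $Q$ completes the proof, and in particular $T_k$ itself (order $k$) never appears. The work here is essentially bookkeeping rather than conceptual: the delicate points are verifying that the rank-one factorization is insensitive to the index arrangement of $Q$ (handled by symmetry of $T_k$), pairing each independent group element with its single factor, and applying the two invariance identities in the correct places. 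The one genuinely careful accounting is that the un-starred factor $\lla T_{\ell_p}, T_{\ell_p}^*\rra$ has order $\ell_p\le n$, so the landing space $R_{2k}\lb T_{1:m},T_{1:n}^*\rb$ is literally as stated when $m\ge n$, and in particular in the equal-order regime $m=n=k-1$ in which the Proposition is invoked in Lemma~\ref{mainthm}.
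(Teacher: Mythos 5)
Your proof is correct and follows essentially the same route as the paper's: reduce by linearity to a single outer-product term, factor $\lla T_k, Q\rra$ through the rank-one structure of $(g\ta)^{\otimes k}$, and use Haar invariance to show each factor is $g$-independent and equals $\|T_{j_i}\|^2$ or $\lla T_{\ell_p}, T_{\ell_p}^*\rra$, which is exactly the identity proved in the paper's Lemma~\ref{app:orbit-recovery-lma}. The only differences are cosmetic (the paper establishes $g$-independence by comparing with a second group element $g'$ and averaging, rather than invoking $h^{\otimes k}T_k = T_k$ directly), and your closing observation that the unstarred factors $T_{\ell_p}$ force the stated landing space to be read with $m \ge n$ is a legitimate bookkeeping point that the paper elides and that is harmless in the regime $m=n=k-1$ where the Proposition is applied.
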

\noin The proof relies crucially on the Haar property of $\gamma$, namely, that ${\bf g}\stackrel{d}{=} h{\bf g}\;\forall h\in G.$

It follows from the proposition that for the orbit retrieval model, the $\sg^{-2k}$ coefficient (for $k>1$) in the asymptotic expansion \eqref{maineqn} of $-L(\ta;\;\ta_*)$ is given by
$$\frac{1}{2(k!)}\|T_k - T_k^*\|^2 + \lla T_k, Q_k\rra + r_k = \frac{1}{2(k!)}\|T_k - T_k^*\|^2 + \tilde r_k,$$ where $\tilde r_k = \lla T_k, Q_k\rra + r_k\in R_{2k}\lb T_{1:k-1},T_{1:2k}^*\rb.$

\section{Expectation Maximization As Gradient Descent}\label{sec:EM}
In this section, we consider the EM algorithm for finite GMMs and the orbit retrieval model, assuming that the mixture weights are known. We show that in these cases, both the standard and gradient EM algorithms reduce to gradient descent on the negative log likelihood with respect to the centers. This equivalence has been pointed out in the literature, in the context of particular models (see, for example,~\cite{wu2019,fan2020likelihood}). In light of the structure of the log likelihood landscape given in Theorem~\ref{mainthmsimple}, we show that the gradient descent step size of standard EM is unnecessarily small, leading to slow convergence. 

To present the EM algorithm, it will be helpful to slightly reformulate the model.

\subsection{Model Reformulation}\label{subsec:modelreform} We will represent mixture models by $$Y = \sg Z + \ta_{\bm\chi},\;\bm\chi\sim\rho,$$ where $\bm\chi$ is a latent membership variable defined on a set $X$ which parameterizes the component distributions of the mixture. We will use $\chi\in X$ (non bold) to denote a sample of $\bm\chi$. \\
\textbf{Finite Mixture Model.} We have $X=\{1,\dots, K\}$ and $\bm\chi\sim\rho$, where $\rho(d\chi) = \sum_{j=1}^K\alpha_j\delta(\chi - j)$, assumed known. We let $\ta = (\ta_\chi)_{\chi=1}^K$ denote the $K$ centers in $\R^d$. \\
\textbf{Orbit Retrieval} Let $G\subset O(d)\subset\R^{d\times d}$ be a possibly infinite group with elements $\{{\bf g}_\chi\mid\chi\in X\}$, and $\bm\chi\sim\rho$, arbitrary. We let $\ta\in\R^d$ denote the vector which generates all the centers $\ta_\chi$ through the action of $G$, i.e. $\ta_\chi = {\bf g}_\chi\ta$, $\chi\in X$. \\

Since both of these models are parameterized by $\ta$, we denote the density of $Y$ by $q_\ta$. It is given by
\beqs
q_{\ta}(y) = \E_{\bm\chi\sim\rho}\lb g\lp\frac{y-\ta_{\bm\chi}}{\sg}\rp\rb=  \int g\lp\frac{y-\ta_\chi}{\sg}\rp\rho(d\chi).
\eeqs 
In the next section we will need the conditional distribution $\chi\vert Y$. It is given by $$q_\ta(d\chi\mid y) = w_{\ta}(y, \chi)\rho(d\chi),$$ 
where we have defined $$w_{\ta}(y,\chi) =g\lp\frac{y-\ta_\chi}{\sg}\rp\bigg/ \E_{\bm\chi\sim\rho}\lb g\lp\frac{y-\ta_{\bm\chi}}{\sg}\rp\rb.$$ Finally, the log likelihood is given by
\beqs
L(\ta;\;\ta_*) =  \E_{Y\sim q_{\ta_*}}\log\,\E_{\bm\chi\sim\rho}\lb g\lp\frac{Y-\ta_{\bm\chi}}{\sg}\rp\rb,
\eeqs where we have discarded the normalization constant. 


\subsection{Algorithm Description}\label{subsec:EM}
Assume $\ta_*$ is the ground truth parameter. Define the function $Q(\ta'\mid\ta;\;\ta_*)$, which is a surrogate for the log likelihood. It is defined as follows:
\beqs
Q(\ta'\mid\ta;\;\ta_*) &=  \E_{Y\sim q_{\ta_*}}\E_{\bm\chi\sim q_{\ta}(\cdot\mid Y)}\log\,g\lp\frac{Y-\ta'_{\bm\chi}}{\sg}\rp\\
&= -\frac{1}{2\sg^2}\E_{Y\sim q_{\ta_*}}\int \|Y-\ta'_\chi\|^2w_\ta(Y, \chi)\rho(d\chi).
\eeqs
Note that if $\ta$ is an estimate of the ground truth parameter $\ta_*$, then the distribution $q_{\ta}(d\chi\mid Y) = w_\ta(Y,\chi)\rho(d\chi)$ is our best guess for the distribution of the latent membership variable $\chi$ given the observed data $Y$. 

Given an initialization $\ta^{(0)}$, the standard and gradient EM updates are given by
\beq\label{maxQ}\begin{aligned}
\ta^{(t+1)} &=\arg\max_{\ta'}Q(\ta'\mid \ta^{(t)};\;\ta_*) & \mathrm{(standard}\;\mathrm{EM)}&\\
\ta^{(t+1)} &= \ta^{(t)} + \tau\nabla_{\ta'}Q(\ta'\mid \ta^{(t)};\;\ta_*)\big\vert_{\ta'=\ta^{(t)}} & \mathrm{(gradient}\;\mathrm{EM)}&,
\end{aligned}\eeq
where $\tau>0$ is some step size. Solving the optimization problem for the standard EM update, we have for finite GMMs the update
\beq\label{EM-GMM-standard}\ta^{(t+1)}_\chi=\frac{\E_Y\lb w_{\ta^{(t)}}(Y,\chi)Y\rb}{\E_Y\lb w_{\ta^{(t)}}(Y, \chi)\rb},\quad \chi=1,\dots, K\eeq and for the orbit retrieval model
$$\ta^{(t+1)} = \int \E_Y\lb w_{\ta^{(t)}}(Y, \chi)g_{\chi}^{-1}Y\rb \rho(d\chi).$$

\begin{prop}
We have 
$$\nabla_{\ta'}Q(\ta'\mid \ta;\;\ta_*)\big\vert_{\ta'=\ta} = \nabla_{\ta}L(\ta;\;\ta_*)$$ for both the finite mixture and orbit retrieval models. Therefore, gradient based EM with step size $\tau$ is the same as gradient ascent on $L(\ta;\;\ta_*)$ with step size $\tau$. 

For the finite mixture model, the standard EM update can be written as
\beq\label{standardEM}\ta_\chi^{(t+1)} =\ta_\chi^{(t)}+ \tau_\chi^t\nabla_{\ta_\chi}L(\ta^{(t)};\;\ta_*),\quad \tau_\chi^t=\frac{\sg^2}{\alpha_\chi\E_Y\lb w_{\ta^{(t)}}(Y, \chi)\rb},\eeq for $\chi=1,\dots, K$. For the (possibly infinite) orbit retrieval model, the standard EM update can be written as
$$\ta^{(t+1)} =\ta^{(t)}+ \sg^2\nabla_{\ta}L(\ta^{(t)};\;\ta_*).$$
\end{prop}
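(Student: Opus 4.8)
The plan is to first establish the gradient identity $\nabla_{\ta'}Q(\ta'\mid\ta;\ta_*)\big\vert_{\ta'=\ta}=\nabla_\ta L(\ta;\ta_*)$ in a way that is agnostic to the parameterization, and then to obtain the two explicit update formulas by differentiating $L$ directly. For the identity I would invoke the classical EM decomposition. Bayes' rule for the reformulated model reads $g\lp(y-\ta'_\chi)/\sg\rp = q_{\ta'}(y)\,w_{\ta'}(y,\chi)$; taking logarithms and then applying $\E_{\bm\chi\sim q_\ta(\cdot\mid y)}$ followed by $\E_{Y\sim q_{\ta_*}}$ gives
\[
Q(\ta'\mid\ta;\ta_*) = L(\ta';\ta_*) + H(\ta'\mid\ta), \qquad H(\ta'\mid\ta) := \E_{Y}\,\E_{\bm\chi\sim q_\ta(\cdot\mid Y)}\lb\log w_{\ta'}(Y,\bm\chi)\rb.
\]
Since $w_{\ta'}(Y,\cdot)\rho$ and $w_\ta(Y,\cdot)\rho$ are precisely the posteriors $q_{\ta'}(\cdot\mid Y)$ and $q_\ta(\cdot\mid Y)$, one has $H(\ta'\mid\ta)-H(\ta\mid\ta) = -\E_Y\,D_{\mathrm{KL}}\lp q_\ta(\cdot\mid Y)\,\|\,q_{\ta'}(\cdot\mid Y)\rp\le 0$, so that $\ta'=\ta$ is an interior maximizer of $H(\cdot\mid\ta)$ and $\nabla_{\ta'}H(\ta'\mid\ta)\big\vert_{\ta'=\ta}=0$. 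Differentiating the decomposition then yields the identity. This argument uses nothing about the specific parameterization, so it settles the first claim for both models simultaneously, and the gradient-EM statement is immediate from \eqref{maxQ}.

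For the explicit updates I would compute $\nabla L$ directly, using $\nabla_x g(x)=-x\,g(x)$ together with the definition of $w_\ta$. In the finite mixture case only the $\chi$-th summand of $q_\ta$ depends on $\ta_\chi$, which gives $\nabla_{\ta_\chi}q_\ta(y)=\alpha_\chi\,\sg^{-2}(y-\ta_\chi)\,g\lp(y-\ta_\chi)/\sg\rp$ and hence
\[
\nabla_{\ta_\chi}L(\ta;\ta_*) = \frac{\alpha_\chi}{\sg^2}\,\E_Y\lb (Y-\ta_\chi)\,w_\ta(Y,\chi)\rb.
\]
I would then rewrite the standard update \eqref{EM-GMM-standard} as $\ta^{(t+1)}_\chi-\ta^{(t)}_\chi = \E_Y[w_{\ta^{(t)}}(Y,\chi)(Y-\ta^{(t)}_\chi)]\big/\E_Y[w_{\ta^{(t)}}(Y,\chi)]$ and substitute the gradient expression to read off the step size $\tau_\chi^t = \sg^2\big/\lp\alpha_\chi\,\E_Y[w_{\ta^{(t)}}(Y,\chi)]\rp$, which is exactly \eqref{standardEM}.

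The orbit retrieval case is the one needing some care, because $\ta$ enters through $\ta_\chi=g_\chi\ta$, so the chain rule produces a factor $g_\chi^{T}$ that equals $g_\chi^{-1}$ precisely because $g_\chi\in O(d)$. This yields $\nabla_\ta\, g\lp(y-g_\chi\ta)/\sg\rp = \sg^{-2}(g_\chi^{-1}y-\ta)\,g\lp(y-g_\chi\ta)/\sg\rp$, and after dividing by $q_\ta(y)$ and using $\int w_\ta(y,\chi)\rho(d\chi)=1$ I obtain
\[
\nabla_\ta L(\ta;\ta_*) = \frac{1}{\sg^2}\lp\int\E_Y\lb w_\ta(Y,\chi)\,g_\chi^{-1}Y\rb\rho(d\chi) - \ta\rp.
\]
Recognizing the integral as the standard EM update for this model gives $\ta^{(t+1)}=\ta^{(t)}+\sg^2\nabla_\ta L(\ta^{(t)};\ta_*)$. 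I expect the only real obstacles to be bookkeeping: tracking the weight $\alpha_\chi$ in the finite case, exploiting the orthogonality cancellation $g_\chi^{T}=g_\chi^{-1}$ in the orbit case, and justifying the interchange of $\E_Y$ with the $\chi$-integral, which is harmless under the compact-support hypotheses already in force.
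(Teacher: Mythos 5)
Your proof is correct, but it reaches the central identity by a genuinely different route than the paper. The paper never invokes the decomposition $Q = L + H$: it computes $\nabla_{\ta_\chi}L$ and $\nabla_{\ta'_\chi}Q$ explicitly for each model (the $\ta$-derivative of the log-mixture produces exactly the posterior weight $w_\ta$) and simply observes that the two expressions agree at $\ta'=\ta$; it then obtains the standard-EM step sizes by noting that $Q(\cdot\mid\ta;\,\ta_*)$ is quadratic in $\ta'_\chi$ (resp.\ $\ta'$) with Hessian $-c\,I$, so that its argmax is reached by exactly one gradient-ascent step of size $1/c$, and computing $c$ for each model. Your route --- Bayes' rule giving $Q(\ta'\mid\ta;\,\ta_*)=L(\ta';\,\ta_*)+H(\ta'\mid\ta)$, then Gibbs' inequality making $\ta'=\ta$ a global interior maximizer of the smooth function $H(\cdot\mid\ta)$ and hence a stationary point --- is the classical EM argument: it is parameterization-agnostic, settles both models at once with no computation, and isolates the conceptual reason the surrogate is tangent to $L$ at the current iterate. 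What this buys in generality it gives back later, since the explicit gradient computations are still needed for the step-size claims, where you subtract $\ta^{(t)}$ from the closed-form maximizers \eqref{EM-GMM-standard} (and the orbit-retrieval analogue) and recognize $\nabla L$; relying on those closed forms is legitimate because the paper states them just before the proposition, though a fully self-contained write-up should add the one-line check that they solve the quadratic maximization in \eqref{maxQ}. Your computations themselves are right: $\nabla_{\ta_\chi}L=\frac{\alpha_\chi}{\sg^2}\E_Y\lb(Y-\ta_\chi)\,w_\ta(Y,\chi)\rb$ agrees with the paper's $L$-gradient (the paper's displayed $Q$-gradient carries a factor $\alpha_\chi/\sg^2$ where $\alpha_\chi/2\sg^2$ is meant, evidently a typo, since otherwise the claimed equality at $\ta'=\ta$ would fail), and the orthogonality step $g_\chi^{T}=g_\chi^{-1}$ combined with $\int w_\ta(y,\chi)\rho(d\chi)=1$ in the orbit-retrieval case is exactly the cancellation the paper uses.
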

We remark that in standard EM for finite mixtures, the step size $\tau_\chi^t$ varies with time, and is also different for different centers $\ta_\chi$.
\begin{proof} For the finite mixture, we use the fact that 
\beqs
\nabla_{\ta_\chi}\log\,&\E_{\bm\chi\sim\rho}\lb g\lp\frac{Y-\ta_{\bm\chi}}{\sg}\rp\rb \\&= -\frac{\alpha_\chi}{2\sg^2}\nabla_{\ta_\chi}\|Y-\ta_\chi\|^2g\lp\frac{Y-\ta_{\bm\chi}}{\sg}\rp\bigg/\E_{\bm\chi\sim\rho}\lb g\lp\frac{Y-\ta_{\bm\chi}}{\sg}\rp\rb\\
&= -\frac{\alpha_\chi}{2\sg^2}\nabla_{\ta_\chi}\|Y-\ta_\chi\|^2w_\ta(Y,\chi).
\eeqs

Thus, 
\beqs\nabla_{\ta_\chi}L(\ta;\;\ta_*) &=  -\frac{\alpha_\chi}{2\sg^2}\E_Y\lb\nabla_{\ta_\chi}\|Y-\ta_\chi\|^2w_\ta(Y,\chi)\rb,
\\ \nabla_{\ta'_\chi}Q(\ta'\mid \ta;\;\ta_*) &=  -\frac{\alpha_\chi}{\sg^2}\E_Y\lb \nabla_{\ta'_\chi}\|Y-\ta'_\chi\|^2w_\ta(Y, \chi)\rb.\eeqs 

For the orbit retrieval model, we use that $\|Y-\bg_{\chi}\ta\|  =\|\bg_{\chi}^{-1}Y - \ta\|$, so that
\beqs
\nabla_{\ta}\log\,\E_{\bm\chi\sim\rho}\lb g\lp\frac{Y-\ta_{\bm\chi}}{\sg}\rp\rb &= \nabla_{\ta}\log\,\E_{\bm\chi\sim\rho}\lb g\lp\frac{\bg_{\bm\chi}^{-1}Y - \ta}{\sg}\rp\rb
\\&=-\frac{1}{2\sg^2}\E_{\bm\chi\sim\rho}\lb w_\ta(Y, \bm\chi)\nabla_\ta \|\bg_{\bm\chi}^{-1}Y - \ta\|^2\rb.
\eeqs
Using this property to compute $\nabla_\ta Q$ as well, we obtain
\beqs\nabla_{\ta}L(\ta;\;\ta_*) &= -\frac{1}{2\sg^2}\E_Y\E_{\bm\chi\sim\rho}\lb w_\ta(Y, \bm\chi)\nabla_\ta \|\bg_{\bm\chi}^{-1}Y - \ta\|^2\rb\\
\nabla_{\ta'}Q(\ta'\mid \ta;\;\ta_*)&= -\frac{1}{2\sg^2}\E_Y\E_{\bm\chi\sim\rho}\lb w_\ta(Y, \bm\chi)\nabla_{\ta'}\|\bg_{\bm\chi}^{-1}Y - \ta'\|^2\rb.\eeqs
We immediately see that in both cases the gradients of $Q$ and $L$ are equal if $\ta'=\ta$. 

To see why standard EM is also gradient ascent on $L$, note that $Q$ is a quadratic function in the $\ta'_\chi$ for finite GMMs, and quadratic in $\ta'$ for orbit retrieval. Now, for a quadratic function $f(\ta') = -\frac c2\|\ta'\|^2 +x^T\ta' + \mathrm{const.}$ we can reach the global maximum in one step of gradient ascent from any point $\ta'$ by taking a step size $\frac1c$. In other words, $\ta'+\frac1c\nabla f(\ta')$ is the global maximizer of $f$.  Taking $\ta' = \ta^{(t)}$, we have 
$$\arg\max_{\ta'} Q(\ta'\mid\ta^{(t)};\;\ta_*) = \ta^{(t)}+\frac1c\nabla Q(\ta^{(t)}\mid\ta^{(t)};\;\ta_*) =\ta^{(t)}+\frac1c\nabla L(\ta^{(t)};\;\ta_*).$$
It remains to compute $c$. For the finite mixture, considering $Q$ as a function of $\ta_\chi$ we see that $$c = \frac{\alpha_\chi}{\sg^2}{\E_Y\lb w_{\ta^{(t)}}(Y, \chi)\rb}.$$ For orbit retrieval, we have
$$c = \frac{1}{\sg^2}\E_Y\int\rho(d\chi)w_{\ta^{(t)}}(Y, \chi) = \frac{1}{\sg^2}.$$
\end{proof}

\subsection{EM in Low SNR Regime}
We will use the expansion \eqref{maineqn} to informally demonstrate that in the low SNR regime, the step size in the standard EM update \eqref{standardEM} for the finite mixture model is much smaller than necessary, leading to slow convergence. The same is true for the orbit retrieval model, as shown in \cite{fan2020likelihood}.  

Let $\ta_* = (\ta_{1*}, \dots, \ta_{K*})$ be the centers of the ground truth model with $\ta_{j*}\in\R^d, j=1,\dots, K$ and $\ta = (\ta_{1}, \dots, \ta_{K})$ be the argument to the log likelihood. We define $\|\ta\|_\infty=\max_{j=1,\dots, K}\|\ta_{j}\|$. Recall that the ground truth mixture weights $\alpha_j$ are considered known, and $T_k(\ta) = \sum_{j=1}^K\alpha_j\ta_j^{\otimes k},k=1,2,\dots.$ As in Section~\ref{sec:main}, we will assume $T_1(\ta_*)=0$, $\|\ta_*\|_\infty=1$ and $\sg\gg1$. 

Now, we will consider the standard EM update in the direction of $T_1(\ta)$ and in the subspace orthogonal to it. First, we have
\begin{prop}\label{T1-EM} Let $\ta\mapsto G(\ta)$ be the standard EM update, given by \eqref{EM-GMM-standard}. Fix a constant $R>0$. Then for all $\ta$ such that $\|\ta\|_\infty/\sg\leq R,$ we have
$$\|T_1(G(\ta))\| \leq C(\|\ta\|_\infty\vee1)^2/\sg,$$ where $C$ depends on $d$ and $R$ only.
\end{prop}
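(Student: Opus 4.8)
The plan is to work directly from the explicit standard-EM update \eqref{EM-GMM-standard}, writing $G(\ta)_\chi = N_\chi/D_\chi$ with $N_\chi = \E_Y\lb w_\ta(Y,\chi)Y\rb$ and $D_\chi = \E_Y\lb w_\ta(Y,\chi)\rb$, where throughout $Y\sim q_{\ta_*}$. Everything is driven by the two cancellations coming from $\sum_\chi\alpha_\chi w_\ta(y,\chi)=1$: first $\sum_\chi\alpha_\chi N_\chi = \E_Y\lb Y\rb = T_1(\ta_*) = 0$ by the normalization $T_1(\ta_*)=0$, and second $\sum_\chi\alpha_\chi D_\chi = 1$. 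Subtracting the former from $T_1(G(\ta)) = \sum_\chi\alpha_\chi N_\chi/D_\chi$ and using $N_\chi = D_\chi G(\ta)_\chi$ gives
\[
T_1(G(\ta)) = \sum_\chi\alpha_\chi N_\chi\lp\tfrac{1}{D_\chi}-1\rp = \sum_\chi\alpha_\chi\, G(\ta)_\chi\,(1-D_\chi).
\]
It then suffices to show (i) $\|G(\ta)_\chi\|\le C(\|\ta\|_\infty\vee1)$ for each $\chi$, and (ii) $\sum_\chi\alpha_\chi|1-D_\chi|\le C(\|\ta\|_\infty\vee1)/\sg$; the claim follows by pulling $\max_\chi\|G(\ta)_\chi\|$ out of the sum. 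Note that the mixing weights cancel in (ii), which is what keeps the final constant free of $\alpha$.

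For (i), I would substitute $y=\ta_\chi+\sg z$ in the integrals defining $N_\chi$ and $D_\chi$. With $R(y)=q_{\ta_*}(y)/q_\ta(y)$ and $r(z)=R(\ta_\chi+\sg z)$, the Gaussian weight turns the update into $G(\ta)_\chi = \ta_\chi + \sg\,\bar z_\chi$, where $\bar z_\chi$ is the mean of the standard Gaussian $g$ tilted by $r$. The crucial observation is that this log-tilt has a \emph{uniformly} small gradient: since the mixture score satisfies $\nabla\log q_\mu(y) = \sg^{-2}(\bar\mu(y)-y)$ with $\bar\mu(y)$ a convex combination of the centers, the ambient $-y/\sg^2$ terms cancel in the difference, leaving $\nabla_z\log r(z) = \sg^{-1}\lp\bar\ta_*(y)-\bar\ta(y)\rp\big\vert_{y=\ta_\chi+\sg z}$, hence $\|\nabla_z\log r\|\le(\|\ta_*\|_\infty+\|\ta\|_\infty)/\sg\le 2(\|\ta\|_\infty\vee1)/\sg$ for every $z$. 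Gaussian integration by parts (Stein's identity) gives $\bar z_\chi = \E_{\tilde g}\lb\nabla_z\log r\rb$ for the tilted measure $\tilde g\propto r\,g$, so $\|\bar z_\chi\|\le\sup_z\|\nabla_z\log r\|\le 2(\|\ta\|_\infty\vee1)/\sg$ and therefore $\|G(\ta)_\chi\|=\|\ta_\chi+\sg\bar z_\chi\|\le 3(\|\ta\|_\infty\vee1)$.

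For (ii), I would use $\int q_\ta(y)w_\ta(y,\chi)\,dy = 1$, so that $1-D_\chi = \int(q_\ta-q_{\ta_*})(y)\,w_\ta(y,\chi)\,dy$. Multiplying by $\alpha_\chi$, using $0\le\alpha_\chi w_\ta\le1$ together with $\sum_\chi\alpha_\chi w_\ta=1$, and summing over $\chi$ yields $\sum_\chi\alpha_\chi|1-D_\chi|\le\|q_\ta-q_{\ta_*}\|_{L^1}=2\,\mathrm{TV}(q_\ta,q_{\ta_*})$. I then bound the total variation between the two equal-weight mixtures by the component-wise estimate $\mathrm{TV}(q_\ta,q_{\ta_*})\le\sum_\chi\alpha_\chi\,\mathrm{TV}\lp\mathcal N(\ta_\chi,\sg^2 I),\mathcal N(\ta_{\chi*},\sg^2 I)\rp$ and the elementary one-dimensional bound $\mathrm{TV}\lp\mathcal N(a,\sg^2 I),\mathcal N(b,\sg^2 I)\rp\le\|a-b\|/(\sqrt{2\pi}\,\sg)$, with $\|\ta_\chi-\ta_{\chi*}\|\le\|\ta\|_\infty+1\le 2(\|\ta\|_\infty\vee1)$. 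This gives (ii), and combining with (i) produces the stated bound with a constant that is in fact absolute (a fortiori depending only on $d$ and $R$).

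The main obstacle is step (i). A priori $G(\ta)_\chi$ is a Gaussian-weighted average of the unbounded observation $Y$ over all of $\R^d$, and in the regime where $\|\ta\|_\infty$ is comparable to $\sg$ the tilt $r$ is not a small perturbation, so the softmax weights $w_\ta$ cannot be controlled by a naive Taylor expansion. The robust estimate hinges entirely on the cancellation of the $-y/\sg^2$ terms in the score difference, which renders $\nabla_z\log r$ uniformly $\mathcal O((\|\ta\|_\infty\vee1)/\sg)$ no matter how large $\|\ta\|_\infty/\sg$ is; coupled with the Stein identity, this bounds the induced mean shift $\bar z_\chi$ without any smallness assumption on the SNR. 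The remaining manipulations are routine.
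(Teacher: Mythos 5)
Your proof is correct, and it takes a genuinely different route from the paper's. The paper (Proposition~\ref{app:T1-EM}, via Lemma~\ref{lem:weights-expansion} and a companion lemma) Taylor-expands the posterior weights in $1/\sg$ to second order: it shows $\E_Y\lb w_\ta(Y,\chi)\rb = 1+\mathcal O\lp(\|\ta\|_\infty\vee1)^2/\sg^2\rp$ and $\E_Y\lb w_\ta(Y,\chi)Y\rb = T_1(\ta_*)+\ta_\chi-T_1(\ta)+\mathcal O\lp(\|\ta\|_\infty\vee1)^2/\sg\rp$, takes the quotient, and averages over $\chi$ so that the $\ta_\chi-T_1(\ta)$ terms cancel; the hypothesis $\|\ta\|_\infty/\sg\leq R$ and the $d$-, $R$-dependence of $C$ enter precisely through the second-order Taylor remainders (moments of $\|Z\|$). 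You instead use three exact or uniform facts: the partition-of-unity identity $\sum_\chi\alpha_\chi w_\ta(y,\chi)=1$, which gives both $\sum_\chi\alpha_\chi N_\chi=T_1(\ta_*)=0$ and $\sum_\chi\alpha_\chi D_\chi=1$ and hence the exact decomposition $T_1(G(\ta))=\sum_\chi\alpha_\chi G(\ta)_\chi(1-D_\chi)$; the cancellation of the $-y/\sg^2$ terms in the score difference combined with Stein's identity, giving the pointwise bound $\|G(\ta)_\chi-\ta_\chi\|=\sg\|\bar z_\chi\|\leq\|\ta\|_\infty+\|\ta_*\|_\infty$; and the $L^1$/total-variation estimate for $\sum_\chi\alpha_\chi|1-D_\chi|$. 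What your route buys: the conclusion holds with an absolute, dimension-free constant and with no restriction on $\|\ta\|_\infty/\sg$ at all, which is strictly stronger than the stated proposition. What the paper's route buys: its expansion yields the componentwise first-order form of the EM map, $G_\chi(\ta)=T_1^*+\ta_\chi-T_1(\ta)+\epsilon_3$, and the weight expansion $\E_Y\lb w_\ta(Y,\chi)\rb=1+\mathcal O(\sg^{-2})$, both of which are reused in Section~\ref{sec:EM} for the step-size analysis of standard EM, whereas your argument controls only the average $T_1(G(\ta))$ and not the individual increments $G(\ta)_\chi-\ta_\chi$ beyond the crude uniform bound. One point you should make explicit in a write-up: the Gaussian integration by parts requires that boundary terms vanish and that $\E_g\|\nabla r\|<\infty$; since $r(z)=(q_{\ta_*}/q_\ta)(\ta_\chi+\sg z)$ grows at most exponentially in $\|z\|$ (the centers are bounded), this is immediate, but it is the one analytic hypothesis behind Stein's identity and deserves a sentence.
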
 
The proof is give in Proposition~\ref{app:T1-EM} in the appendix. Proposition~\ref{T1-EM} shows that if the EM iterates $\ta^{(t)}$ remain in a radius $\mathcal O(1)$ ball, then starting with $t=1$ the estimated first moment $T_1\lp\ta^{(t)}\rp$ is order $\mathcal O(\sg^{-1})$ away from $T_1^*=0$.

While $T_1$ nearly converges in one iteration of standard EM, the algorithm is much slower in the subspace orthogonal to $T_1$. To show this, we use the gradient descent representation of EM, 
$$\ta_\chi^{(t+1)} =\ta_\chi^{(t)}+ \tau_\chi^t\nabla_{\ta_\chi}L(\ta^{(t)};\;\ta_*),\quad\chi=1,\dots,K.$$ For $\ta$ such that $\|\ta\|_\infty=\mathcal O(1)$ with respect to $\sg$, we have 
\beq\label{LL-EM}-L(\ta;\;\ta_*)  = \mathrm{const.} +\frac12\|T_1(\ta)\|^2\sg^{-2} + q_4(\ta, \ta_*)\sg^{-4} + \mathcal O(\sg^{-6}),\eeq where $q_4$ is a homogeneous polynomial of order $4$ with respect to the entries of $\ta,\ta_*$. This follows from the representation of the log likelihood given in \eqref{maineqn}. Now, consider the gradient of \eqref{LL-EM} in the subspace orthogonal to $T_1$. On this subspace, the highest order term of $L$, given by $\|T_1(\ta)\|^2/\sg^2$, is constant (not optimized), while $q_4$ and its $\ta$-derivatives are order $\mathcal O(\sg^{-4})$. It follows that the optimal step size for gradient descent is $\mathcal O(\sg^{4})$. However, the actual step size is $\tau_\chi^t = \frac{\sg^2}{\alpha_\chi}\E_Y\lb w_{\ta^{(t)}}(Y, \chi)\rb^{-1} = \mathcal O(\sg^2)$, using that $\E_Y\lb w_{\ta^{(t)}}(Y, \chi)\rb=1+\mathcal O(\sg^{-2})$. This is shown in Lemma~\ref{lem:weights-expansion} of the appendix. 

Recall that for the orbit recovery model, the standard EM update is a gradient descent step on $-L$ with step size $\sg^2$ exactly. Numerical experiments in \cite{fan2020likelihood} show that gradient descent on $-L$ in the subspace orthogonal to $T_1$ with step size $\mathcal O\lp\sg^{4}\rp$ achieves much faster convergence than standard EM.

\section{Examples of Interest and Implications}\label{sec:examplesimplications} Recall that Theorem~\eqref{mainthmsimple} shows that in the low SNR regime, likelihood optimization for parameterizable GMMs reduces to the sequence of minimization problems
\beq\label{momentmatching}\min_{\ta\in\mathcal V_{k-1}}\|T_{k}(\ta) - T_{k}(\ta_*)\|^2, \; k=1,2,\dots,\eeq where $\mathcal V_0 = \Theta$ and
\beq\label{varieties}\mathcal V_k = \{\ta\in\Theta\mid T_\ell(\ta) = T_\ell(\ta_*),\;l=1,\dots, k\}, \quad k=1,2,\dots.\eeq


In the following two sections, we characterize the critical points of the minimization problems~\eqref{momentmatching} for two GMMs: a uniform mixture of two Gaussians in $\R^d$ and an arbitrary finite mixture of Gaussians in $\R$. We conclude the section with a discussion of the implications of the expansion for models with algebraic structure and GMMs with randomly chosen centers. We also discuss the necessary steps to make rigorous the connection between the moment matching and likelihood landscapes. 


\subsection{Uniform Mixture of Two Gaussians in $\R^d$}\label{subsec:mixtwo}
Let $Y=\sg Z + \bmta\in\R^d$, where $\bmta\sim\rho$, which belongs to the family
$$\{\rho(dv) = \frac12\delta(v-\ta_1) + \frac12\delta(v-\ta_2)\mid \ta_1,\ta_2\in\R^d\}.$$ 
Motivated by ~\cite{two-mixture}, we study the moment matching minimization problems in the following coordinates: 
\beq\label{alphabet}\alpha = \frac12\ta_1 + \frac12\ta_2,\quad \beta =\ta_1 - \ta_2.\eeq Define $\alpha_*,\beta_*$ analogously for the ground truth parameters. This is a natural reparameterization for the landscape, since
\beq
T_1(\rho) = \alpha,\quad T_2(\rho) = \alpha\alpha^T + \frac14\beta\beta^T.
\eeq
We see that the first moment $T_1^*$ determines $\alpha_*$, while $\beta_*$ is determined up to sign from $T_2^*$ given that $\alpha = \alpha_*$. Swapping $\ta_1$ and $\ta_2$ does not change the mixture distribution (since it is uniform), so $\alpha$ and $\pm\beta$ uniquely specify the distribution. It therefore suffices to consider the first two moment-matching optimization problems. The first unconstrained optimization problem, $\min_{\alpha\in\R^d}\|\alpha-\alpha_*\|^2$ has only a global minimum at $\alpha=\alpha_*$. Now, on the manifold $\alpha = \alpha_*$, the second optimization problem reduces to $$\min_{\beta\in\R^d}\|\beta\beta^T - \beta_*\beta_*^T\|^2.$$ We see that the points $\beta = \pm \beta_*$ are global minima, while $\beta=0$ is a saddle point.

This aligns with the results of ~\cite{two-mixture} on the fixed points of EM. The authors reformulate the EM updates in the $\alpha, \beta$ coordinates \eqref{alphabet}. Letting $\alpha^{(t)}, \beta^{(t)},t=0,1,\dots$ be the EM iterates, they show that $\alpha^{(t)}$ converges to $\alpha_*$ as $t\to\infty$, while $\beta^{(t)}$ converges to $\pm\beta_*$ if $\la \beta^{(0)}, \beta_*\ra\neq 0$ and $\beta^{(t)}$ converges to $0$ if $\la \beta^{(0)}, \beta_*\ra=0$. 

\subsection{Mixture of $K$ Gaussians in $\R$}
Let $Y = \sg Z + \bmta\in\R$, where $\bmta\in\R$ is distributed according to $\rho$ in the family
$$\mathcal R_{\alpha\text{-mix}} =\{\rho_\ta(dx)=\sum_{j=1}^K\alpha_j\delta(x-\ta_j)\mid \ta=(\ta_1, \dots, \ta_K)\in\R^K\}.$$ Here, $\alpha_j$ are positive weights summing to $1$. They are assumed known, so that the unknown parameters are $\ta=(\ta_1, \dots, \ta_K)\in\R^K$. Interestingly, if the mixture is uniform ($\alpha_j=1/K\;\forall j$), then in the low SNR regime this model is equivalent to the following orbit retrieval model studied in \cite{fan2020likelihood}:
$Y = \sg Z + \bmta\in\R^K$, where $\bmta\in\R^K$ is distributed according to $\nu$ in the family
$$ \mathcal R_{\mathrm{orbit}} = \{\nu_\ta(dx)=\frac{1}{K!}\sum_{j=1}^{K!}\delta(x-{\bf g}_j\ta)\mid \ta\in\R^K\}.$$ Here, $G = \{{\bf g}_j,\; j=1, \dots, K!\}\subset O(K)\subset \R^{K\times K}$ is a subgroup of the orthogonal group acting on vectors in $\R^K$ by permuting their entries. In other words, the orbit of $\ta$ under $G$ is the set of all permutations of the entries of $\ta$. 

The two models are equivalent in the sense that there is a one-to-one mapping 
\beq\label{onetoone}\{T_{1:k}(\rho)\mid\rho\in \mathcal R_{1/K\text{-mix}}\} \longleftrightarrow \{T_{1:k}(\rho)\mid\rho\in \mathcal R_{\mathrm{orbit}}\}.\eeq To show this, define the polynomials $p_\ell(\ta) = \frac1K\sum_{j=1}^K\ta_j^\ell$, so that $T_\ell(\rho_\ta) = p_\ell(\ta)$ for $\rho_\ta\in \mathcal R_{1/K\text{-mix}}$. Now, let $\nu_\ta$ be the corresponding measure in $\mathcal R_{\mathrm{orbit}}$. The entries of the tensor $T_\ell(\nu_\ta)$ are $\ell$-degree polynomials in $\R\lb \ta_1, \dots, \ta_K\rb$ which are invariant under permutation of the $\ta_j$. But the polynomials $p_{j},j=1,\dots, \ell$ generate the permutation invariant polynomials of degree at most $\ell$ (see \cite{fan2020likelihood} and the references therein), showing that both sets in \eqref{onetoone} are in one-to-one correspondence with $\{(p_1(\ta),\dots, p_k(\ta))\mid \ta\in \R^K\}$. 

In particular, \cite{fan2020likelihood} shows that the moment-matching problem for the orbit retrieval model reduces to 
$$\min_{x\in\mathcal V_k} (p_{k+1}(\ta) - p_{k+1}(\ta_*))^2,$$ where $\mathcal V_k = \{\ta\in\R^K\mid p_j(\ta) = p_j(\ta_*), j=1,\dots, k\}.$ This is precisely the moment-matching problem for a uniform mixture on $\R$.

We now generalize results in \cite{fan2020likelihood} on critical points of the above moment matching landscape to the case of non-uniform mixtures in $\R$. Fix positive weights $\alpha = (\alpha_1, \dots, \alpha_K)$ summing to $1$, and define $$p_\ell(\ta) = \sum_{j=1}^K\alpha_j\ta_j^\ell,$$ so that $p_\ell(\ta) = T_\ell(\rho_\ta)$ for a distribution $\rho_\ta\in\mathcal R_{\alpha\text{-mix}}$ on $\R$. 
For a fixed $\ta_* = (\ta_1^*, \dots, \ta_K^*)$ let $\mathcal V_k$ be the variety $\mathcal V_k = \{\ta\in\R^K\mid p_j(\ta) = p_j(\ta_*), j=1,\dots, k\}.$ 

The following result characterizes critical points of the moment matching objective function that are not global minima. 
\begin{prop}\label{mixture-in-R}The following holds for any generic $\ta_*$: Define $f_{n+1}:\R^K\to\R$ by $$f_{n+1}(\ta) = \frac12(p_{n+1}(\ta) -p_{n+1}^*)^2,$$ where $p_{n+1}^* = p_{n+1}(\ta_*)$. Then 
\begin{enumerate}[(a)]
\item A point $x= (x_1, \dots, x_K)\in\mathcal V_n\setminus \mathcal V_{n+1}$ is a critical point of $f_{n+1}\vert_{\mathcal V_n}$ if and only if it is a critical point of $p_{n+1}\vert_{\mathcal V_n}$, if and only if exactly $n$ coordinates $x_j$ are distinct.  
\item Let $x\in\mathcal V_n\setminus \mathcal V_{n+1}$ be a critical point of $f_{n+1}\vert_{\mathcal V_n}$. Assume without loss of generality that $x_1 > x_2 > \dots > x_n$ are the distinct centers, and let $(m_1, \dots, m_n)$ be the multiplicity vector, i.e. $m_i$ is the number of times $x_i$ repeats. We have the following classification of $x$: 
\begin{itemize}
\item If the multiplicity vector has the form $(m_1, 1, m_3, 1, \dots)$ then $x$ is a local minimum of $f_{n+1}\vert_{\mathcal V_n}$ if $p_{n+1}(x)> p_{n+1}^*$ and a local maximum if $p_{n+1}(x)<p_{n+1}^*$.
\item If the multiplicity vector has the form $(1, m_2, 1, m_4, \dots)$, then $x$ is a local minimum of $f_{n+1}\vert_{\mathcal V_n}$ if $p_{n+1}(x)< p_{n+1}^*$ and a local maximum if $p_{n+1}(x)>p_{n+1}^*$. 
\item If the multiplicity vector is not of either form, then $x$ is a saddle point of $f_{n+1}\vert_{\mathcal V_n}$ and of $p_{n+1}\vert_{\mathcal V_n}$ 
\end{itemize}
\item There are no local minima of $f_{n+1}\vert_{\mathcal V_n}$ on $\mathcal V_n\setminus \mathcal V_{n+1}$ if the weights $\alpha_j$ are uniform.
\end{enumerate}
\end{prop}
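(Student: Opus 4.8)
The plan is to treat (a), (b), (c) in order, since (b) builds on the critical-point classification of (a), and (c) is a sign refinement of (b).

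For (a), I would use Lagrange multipliers. Writing $\nabla p_\ell(x) = \ell(\alpha_1 x_1^{\ell-1},\dots,\alpha_K x_K^{\ell-1})$ and noting $\nabla f_{n+1} = (p_{n+1}-p_{n+1}^*)\nabla p_{n+1}$ with $p_{n+1}(x)-p_{n+1}^*\neq 0$ on $\mathcal V_n\setminus\mathcal V_{n+1}$, the first equivalence is immediate: $x$ is critical for $f_{n+1}|_{\mathcal V_n}$ iff it is critical for $p_{n+1}|_{\mathcal V_n}$. For the second equivalence, the Lagrange condition $\nabla p_{n+1}(x)\in\mathrm{span}\{\nabla p_1(x),\dots,\nabla p_n(x)\}$ is, after dividing the $j$-th entry by $\alpha_j>0$, exactly the statement that every coordinate $x_j$ is a root of $P(t)=(n+1)t^n-\sum_{\ell=1}^n \ell\lambda_\ell t^{\ell-1}$, a polynomial of degree exactly $n$; hence $x$ has at most $n$ distinct coordinates, and conversely any $x$ with $\le n$ distinct coordinates is critical (choose $P$ to be $(n+1)$ times a monic degree-$n$ polynomial vanishing at those values, which fixes the $\lambda_\ell$). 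To upgrade ``at most $n$'' to ``exactly $n$'' I would invoke genericity: on the stratum where a fixed partition of the coordinates into $r$ blocks is imposed, the block weights are the fixed sums of the $\alpha_j$, so $x\in\mathcal V_n$ becomes the $n$ equations $\sum_{i=1}^r \beta_i u_i^\ell = p_\ell^*$ in the $r$ block-values $u_i$; for $r<n$ this is overdetermined and has no solution for generic $\theta_*$. At a point with exactly $n$ distinct values the gradients $\nabla p_1,\dots,\nabla p_n$ are independent (a weighted Vandermonde has full rank), so $\mathcal V_n$ is smooth there and the Lagrange calculus is valid.

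For (b), the key observation is that at such a critical point $x$ the intrinsic Hessian of $f_{n+1}|_{\mathcal V_n}$ equals $c\cdot H$, where $c=p_{n+1}(x)-p_{n+1}^*$ and $H$ is the intrinsic Hessian of $p_{n+1}|_{\mathcal V_n}$: differentiating $\tfrac12(p_{n+1}-p_{n+1}^*)^2$ twice along a curve in $\mathcal V_n$ and using $\nabla_{\mathcal V_n}p_{n+1}=0$ kills the squared-first-derivative term. So the classification reduces to the signature of $H$ multiplied by $\mathrm{sign}(c)$. To compute $H$, I would perturb $x_j\mapsto x_j+\delta_j$, expand the constraints $p_\ell=p_\ell^*$ ($\ell\le n$) and the objective $p_{n+1}$ to second order, and group coordinates by common value. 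Writing $\sigma_{i,1}=\sum_{j\in\text{group }i}\alpha_j\delta_j$ and $\sigma_{i,2}=\sum_{j\in\text{group }i}\alpha_j\delta_j^2\ge 0$, the linearized constraints force (via an $n\times n$ Vandermonde inversion) $\sigma_{i,1}$ to be second order, and the identity $\sum_i q(x_i)\sigma_{i,1}=-\tfrac12\sum_i q'(x_i)\sigma_{i,2}$ for $\deg q\le n-1$, applied to the degree-$(\le n-1)$ interpolant of $t^n$, produces a cancellation leaving
\[ H(v,v) = (n+1)\sum_{i=1}^n \pi'(x_i)\,\Big(\sum_{j\in\text{group }i}\alpha_j v_j^2\Big),\qquad \pi(t)=\prod_{i=1}^n(t-x_i). \]
Since $\mathrm{sign}\,\pi'(x_i)=(-1)^{i-1}$ for $x_1>\dots>x_n$, and the inner form is positive definite on the $(m_i-1)$-dimensional tangent block of group $i$ while a singleton contributes no tangent direction, $H$ is positive definite exactly when every even-indexed group is a singleton, negative definite exactly when every odd-indexed group is a singleton, and indefinite otherwise. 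Combining with the factor $\mathrm{sign}(c)$ gives the three cases.

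For (c), parts (a)--(b) reduce the claim to a sign statement: a spurious local minimum of $f_{n+1}|_{\mathcal V_n}$ would require either pattern $(m_1,1,m_3,\dots)$ with $c>0$ or pattern $(1,m_2,1,\dots)$ with $c<0$, so it suffices to show that for uniform weights the sign of $c$ is always the opposite of what these demand. I would compute $c$ via the quadrature identity $c = -\int t\,\pi(t)\,d\mu_*$, where $\mu_*=\tfrac1K\sum_j\delta_{\theta_j^*}$ and $\mu_x=\tfrac1K\sum_i m_i\delta_{x_i}$ agree on all moments of order $\le n$ (so $\int t\pi\,d\mu_x=0$). The sign of this functional I would extract from the difference of cumulative distribution functions $G=F_{\mu_x}-F_{\mu_*}$, which is orthogonal to all polynomials of degree $\le n-1$, so a Chebyshev-type sign-change analysis applies; the equal-weight structure is what pins down both the number of sign changes and the boundary sign and hence forces the ``wrong'' sign of $c$. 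This sign determination is the main obstacle: the reductions in (a)--(b) are essentially linear algebra, whereas (c) requires controlling the global interlacing between the optimal nodes $x_i$ and the data $\theta_j^*$. As independent corroboration, for uniform weights the model coincides with the permutation orbit-retrieval model discussed above, whose moment-matching landscape is known to have no spurious local minima, consistent with the small cases one can verify by hand.
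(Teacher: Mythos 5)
Your treatment of (a) and (b) is correct and is essentially the paper's own argument. For (a) you use the same Lagrange reduction ($\nabla f_{n+1} = (p_{n+1}-p_{n+1}^*)\nabla p_{n+1}$ with a nonvanishing scalar factor on $\mathcal V_n\setminus\mathcal V_{n+1}$) and the same degree-$n$ polynomial/Vandermonde observation forcing at most $n$ distinct coordinates; for the facts that generic points of $\mathcal V_n$ have at least $n$ distinct entries and that $\mathcal V_n$ is nonsingular there, the paper cites Lemma 4.23 of \cite{fan2020likelihood}, while you sketch a dimension count in the same spirit. For (b), your second-order perturbation of the constraints (the identity $\sum_i q(x_i)\sigma_{i,1} = -\tfrac12\sum_i q'(x_i)\sigma_{i,2}$ for $\deg q\le n-1$, applied to the degree-$(n-1)$ interpolant of $t^n$) is a hands-on version of the paper's computation of the Lagrangian Hessian $\nabla^2 p_{n+1}-\sum_j\lambda_j\nabla^2 p_j = \mathrm{diag}\big(\alpha_1 q_n'(x_1),\dots,\alpha_K q_n'(x_K)\big)$ restricted to the tangent plane: your formula $H(v,v)=(n+1)\sum_i \pi'(x_i)\sum_{j\in\mathrm{group}\,i}\alpha_j v_j^2$ coincides with the paper's because $q_n=(n+1)\pi$, and the group-sum characterization of the tangent space and the alternating-sign argument for $\pi'$ at the ordered roots are identical. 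So (a) and (b) stand.

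The genuine gap is (c). Your reduction of (c) to a sign statement via (a)--(b), and your quadrature identity $c=-\int t\,\pi(t)\,d\mu_*$ (using that $\mu_x$ and $\mu_*$ share moments up to order $n$ and that $\pi$ vanishes at the atoms of $\mu_x$), are both correct; but the Chebyshev/CDF sign-change analysis that is supposed to force the ``wrong'' sign of $c$ is only named, not executed, and you yourself flag it as the main obstacle. As written, (c) is unproven. You should know, however, that the paper does not prove (c) either: its entire proof is the sentence ``(c) is shown in \cite{fan2020likelihood}.'' The fact you offer as ``independent corroboration'' --- that for uniform weights the model is equivalent to the permutation orbit-retrieval model, whose moment-matching landscape \cite{fan2020likelihood} shows has no spurious local minima --- is precisely the paper's justification, with the equivalence established earlier in the paper through the one-to-one correspondence of moment varieties. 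So the fix is simply to promote that closing remark from corroboration to the actual proof: either invoke the equivalence and cite \cite{fan2020likelihood}, or carry out the interlacing/sign-change argument in full, which is genuinely nontrivial and is exactly where the uniform-weight hypothesis must enter (part (b) shows spurious local minima can exist for non-uniform weights).
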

\begin{example}\textbf{No local minima of $f_2$ on $\mathcal V_1$}\\
Suppose $x = (x_1,\dots, x_K)$ is a critical point of $f_2\vert_{\mathcal V_1}$ such that $x\notin \mathcal V_2$. This implies $x_1=\dots=x_K = p_1^*$, i.e. $m_1=K>1$. But then $p_2(x) = (p_1^*)^2 < p_2^*$ (recall that $p_1^*, p_2^*$ are the first and second moments of the distribution $\rho_{x^*}$, respectively), so $x$ is a local maximum. 
\end{example}
\begin{proof}[Proof of Proposition~\ref{mixture-in-R}]
A point $x\in \mathcal V_n$ is a critical point of $f_{n+1}\vert_{\mathcal V_n}$ if and only if $\nabla f_{n+1}(x)$ lies in the span of $\nabla  p_j(x), j=1,\dots, n$. We have $$\nabla f_{n+1} = (p_{n+1}- p_{n+1}^*)\nabla p_{n+1},$$ so if $x\notin \mathcal V_{n+1}$ then $p_{n+1}(x) - p_{n+1}^*\neq 0$, implying $\nabla p_{n+1}(x)$ also lies in the span of $\nabla p_j(x), j=1,\dots, n$. Hence $x$ is a critical point of $p_{n+1}\vert_{\mathcal V_n}$. 

Now, by arguments analogous to those in Lemma 4.23 of \cite{fan2020likelihood}, every point in $\mathcal V_n$ has at least $n$ distinct entries (for generic $x_*$), and $\mathcal V_n$ is nonsingular (i.e. the gradients $\nabla p_j(x), j=1,\dots, n$ are linearly independent for every $x\in\mathcal V_n$).

We show that a critical point $x$ of $p_{n+1}\vert_{\mathcal V_n}$ can have at most $n$ distinct entries. Note that $\partial_j p_k(x) = k\alpha_jx_j^{k-1}$. Since $\nabla p_{n+1}(x)$ lies in the span of the gradients $\nabla p_k(x)$, there exist $\lambda_1, \dots, \lambda_n$ such that 
\beq\label{vander1}\bpm \alpha_1(n+1)x_1^n\\ \vdots \\ \alpha_K(n+1)x_K^n\epm = \lambda_0\bpm \alpha_1\\ \vdots \\ \alpha_K\epm +\lambda_{1}\bpm 2\alpha_1x_1^{1}\\ \vdots \\ 2\alpha_Kx_K^{1}\epm +  \dots + \lambda_{n-1}\bpm n\alpha_1x_1^{n-1}\\ \vdots \\ n\alpha_Kx_K^{n-1}\epm.\eeq Define the polynomial \beq\label{pn}q_n(x) = (n+1)x^n - (n\lambda_{n-1}x^{n-1}+\dots + 2\lambda_1x+\lambda_0).\eeq Now, $q_n$ is an $n$th order polynomial, and \eqref{vander1} gives that $q_n(x_1) = \dots = q_n(x_K) = 0$ (since the $\alpha_j$ are nonzero). This implies that there are at most $n$ distinct points among $x_1,\dots,x_K$.

The second assertion follows from \cite{arnold}, but we provide a proof for the sake of completeness. We will use the following characterization of critical points on manifolds, reviewed in Appendix~\ref{app:miscellany}: \\

\noin Let $f:\R^K\to \R$ and $\mathcal M\subset \R^K$ be the intersection of level sets of functions $g_1, \dots, g_n$. Let $x\in\mathcal M$ be a critical point of $f\vert_{\mathcal M}$ and $c_1, \dots, c_n\in\R$ be such that \beq\label{critpoint}\nabla f(x) = \sum_{j=1}^nc_j\nabla g_j(x).\eeq Then $x$ is a saddle, local minimum, or local maximum of $f$ on $\mathcal M$ iff the quadratic form
\beq\label{critpoint2} \nabla^2 f(x) - \sum_{j=1}^nc_j\nabla^2g_j(x)\eeq is indeterminate, positive definite, or negative definite, respectively, on the tangent plane to $\mathcal M$ at $x$. \\

\noin We apply this result with $f=f_{n+1}$, $g_j=p_j$ and $\mathcal M = \mathcal V_n$. Let $x\in\mathcal V_n\setminus \mathcal V_{n+1}$ be a critical point of $f_{n+1}\vert_{\mathcal V_n}$. Without loss of generality, assume $x_1>x_2 >\dots > x_n$ are the distinct points. Letting $\lambda_j, j=1,\dots, n$ be as in \eqref{vander1}, we have $$\nabla f_{n+1}(x) = (p_{n+1}(x)- p_{n+1}^*)\sum_{j=1}^n\lambda_j\nabla p_j,$$ so that $c_j = (p_{n+1}(x)- p_{n+1}^*)\lambda_j$. Now the Hessian of $f_{n+1}$ is given by
\beq
\nabla^2 f_{n+1} = (p_{n+1} - p_{n+1}^*)\nabla^2 p_{n+1}+ \nabla p_{n+1}\nabla^T p_{n+1},
\eeq
where $\nabla p_{n+1}$ is a column vector. Since $\nabla p_{n+1}(x)$ is a linear combination of $\nabla p_j(x), j=1, \dots, n$, it is orthogonal to vectors in the tangent plane of $\mathcal V_{n+1}$ at $x$. We therefore drop it from the quadratic form and consider
\beqs
(p_{n+1} - p_{n+1}^*)\nabla^2 p_{n+1} &- \sum_{j=1}^n(p_{n+1} - p_{n+1}^*)\lambda_j\nabla^2p_j(x)\\
&=  (p_{n+1} - p_{n+1}^*)\lp\nabla^2 p_{n+1}(x) - \sum_{j=1}^n\lambda_j\nabla^2p_j(x)\rp\\
&= (p_{n+1} - p_{n+1}^*)\mathrm{diag}\big(\alpha_1q_n'(x_1),\dots, \alpha_Kq_n'(x_K)\big),
\eeqs
where the polynomial $q_n$ is as in \eqref{pn}. We now characterize the vectors $v\in\R^K$ in the tangent plane to $\mathcal V_n$ at $x$, i.e. perpendicular to $\nabla p_j(x), j=1,\dots, n$.  First, define the vectors $u_k\in \R^n,\,k=1,\dots,n$  by $$u_k = (x_1^{k-1},\dots, x_n^{k-1})^T, \; k=1,\dots, n.$$ Then the matrix $V\in\R^{n\times n}$ with columns $u_1, \dots, u_n$ is a Vandermonde matrix with determinant
$$\mathrm{det}(V) = \prod_{1\leq i < j\leq n}(x_i - x_j) \neq 0.$$ Thus, $u_1, \dots, u_n$ are linearly independent. Now, let $v$ be perpendicular to $\nabla p_j(x), j=1,\dots, n$. For such a $v$, we have
\beq\label{vm}
0 = \lla {v}, \nabla p_k\rra = k\sum_{i=1}^nx_i^{k-1}\sum_{\substack{j\;\mathrm{s.t.}\\x_j = x_i}}\alpha_jv_j = k\lla u_k, \tilde v\rra,
\eeq
where we have defined $\tilde v = (\tilde v_1,\dots, \tilde v_n)\in\R^n$ by $$\tilde {v}_{i} = \sum_{\substack{j\;\mathrm{s.t.}\\x_j = x_i}}v_j\alpha_j,\; i=1,\dots,n.$$ 
Taking $k=1,\dots,n$ in \eqref{vm}, we see that $\tilde v$ is orthogonal to the $n$ linearly independent vectors $u_1, \dots, u_n$ and is therefore identically zero. The condition $\tilde v=0$ is clearly also sufficient for $v$ to lie in the subspace orthogonal to $\nabla p_j(x), j=1,\dots,n$. Note that if $x_i$ is non-repeating for some $i$, then $0 = \tilde v_i= v_i\alpha_i$, from which we infer that $v_i = 0$. Now, for such a $v$, we have
\beqs\label{quadform}
 v^T(p_{n+1} - p_{n+1}^*)&\mathrm{diag}\big(\alpha_1q_n'(x_1),\dots, \alpha_Kq_n'(x_K)\big)v \\&= (p_{n+1} - p_{n+1}^*)\sum_{i=1}^nq_n'(x_i)\sum_{\substack{j\;\mathrm{s.t.}\\x_j = x_i}}\alpha_jv_j^2\\
 &=  (p_{n+1} - p_{n+1}^*)\sum_{\substack{1\leq i\leq n,\\x_i\;\mathrm{repeats}}}q_n'(x_i)\sum_{\substack{j\;\mathrm{s.t.}\\x_j = x_i}}\alpha_jv_j^2.
\eeqs
where in the last line we have used that $v_i=0$ for all $i$ such that $x_i$ is non-repeating. Now, let $i\in\{1,\dots, n\}$ be such that $x_i$ repeats. Note that as $v_j$ for $j$ such that $x_j=x_i$ range over the set satisfying $\sum_{\substack{j\;\mathrm{s.t.}\\x_j = x_i}}v_j\alpha_j = 0$, the number $\sum_{\substack{j\;\mathrm{s.t.}\\x_j = x_i}}\alpha_jv_j^2$ can take any value in $[0,\infty)$. We therefore see that the quadratic form of \eqref{quadform} is indeterminate if $q_n'(x_i)$ takes both positive and negative values for  repeating $x_i$. If $q_n'(x_i)$ are all of the same sign $s$ for repeating $x_i$, then the quadratic form is positive if $p_{n+1}(x) - p_{n+1}^*$ also has sign $s$, and negative if $p_{n+1}(x) - p_{n+1}^*$ has sign $-s$. Now, recall that $q_n$ is an order $n$ polynomial with roots $x_1, \dots, x_n$. Since these points are all distinct, this implies $q_n$ is a positive multiple of $(x-x_1)(x-x_2)\dots (x-x_n)$. The derivative of such a polynomial has alternating sign from root to root, and $q_n'(x_1) > 0$, since $x_1$ is the rightmost root on the real line. This finishes the proof of (b). 

Finally, (c) is shown in \cite{fan2020likelihood}. 
\end{proof}

\subsection{Algebraically Structured Models and Discussion}\label{subsec:algmodel} 
There are many important inference problems that are naturally modelled as GMMs with algebraic structure imposed on the centers. A motivating application is that of molecule imaging using Cryo-Electron Microscopy (cryo-EM), in which the goal is to reconstruct the density of a molecule given partial observations of it. The imaging data can be modeled by a GMM which generalizes the orbit recovery model in several ways. We describe the model in full generality, since it encapsulates most of the algebraically structured models of interest.  

The observations in cryo-EM are given by noisy projections of the molecule taken from different unknown viewing directions. Moreover, the molecule may be observed in one of several conformations. 
A common model assumption is to consider the noise to be Gaussian, in which case we can model the data by the GMM $Y = \sg Z + \bmta,$ where $Z$ is the additive Gaussian noise and $\bmta$ encodes the projection, rotation, and conformation of the molecule.

Specifically, let $\ta_j\in\R^d, j=1,\dots, K$ represent the densities of the molecule in its $K$ different conformations. These are the signals we wish to recover. Let $\bm\chi\in\{1,2,\dots, K\}$ be a random variable representing the probability to observe conformation $j=1,\dots, K$, with $\mathbb P(\chi = j) = \alpha_j, j=1,\dots, K$. This distribution is also unknown. Next, let $G$ be the group of rotations on $\R^d$, and ${\bf g}\sim \mathrm{Haar}(G)$ be a random variable which has uniform distribution over $G$. Finally, let $\Pi:\R^d\to \R^m$, with $m<d$, be the tomographic projection, a linear projection operator corresponding to the imaging procedure. We can then write the GMM as
\beq\label{cryoEM}Y = \sg Z + \bmta,\quad\bmta=\Pi({\bf g}\ta_{\bm\chi}).\eeq To summarize, the centers of this mixture, given by the support of $\bmta$, are the projections of the orbits under the continuous group $G$ of $K$ points in $\R^d$. The following are simplifications of this general model:
\begin{enumerate}
\item \textbf{Discrete Homogeneous Orbit Retrieval.} This is a type of orbit retrieval model~\eqref{orbitretrieval} described in Section~\ref{sec:main}. Here, there is no projection operator and only one orbit. One special case of interest is Multireference Alignment (MRA), in which the group $G= \{g_0,\dots, g_{d-1}\}$ is the group which acts on vectors in $\R^d$ by cyclically shifting their entries, i.e.
$$(g_j\ta)_k = \ta_{j+k\mod d},\quad j,k=0,\dots, d-1.$$ 
\item \textbf{Orbit Retrieval with non-uniform weights.} In this case, the distribution of ${\bf g}$ is not restricted to be uniform over $G$, and is unknown. 
\item \textbf{Continuous Orbit Retrieval.} Here, the group $G$ may be infinite. As an example, continuous MRA is a generalization of discrete MRA, in which shifts of entries are generalized to continuous shifts of periodic functions on the torus, i.e. $\tau_x\ta(y) = \ta(y+x\mod 1), x,y\in [0,1)$. The periodic functions are assumed bandlimited so that they can be represented in a finite-dimensional Fourier basis.  
\item \textbf{Heterogeneous Orbit Retrieval.} There is no projection operator, but the centers form $K>1$ orbits of a group $G$.
\end{enumerate}

A connection between the log-likelihood and the moments of the mixture was established in~\cite{bandeira17_opt, perry17_sample} for homogeneous MRA. In that paper, upper and lower bounds on the KL divergence (essentially the negative log likelihood) are given in the form of a series similar to ours, in which each term is the squared norm of the difference between true and estimated moments. This was then used to understand the sample complexity of the orbit retrieval problem, heavily exploiting the fact that the moments, due to the model's algebraic structure, correspond to invariant polynomials with respect to the group action. This showed that in the low SNR regime, the sample complexity of MRA increases from the standard $\mathcal O(1/\mathrm{SNR})$ to $\mathcal O(1/\mathrm{SNR}^3)$, a previously unexplained phenomenon first observed in experiments performed in the context of Cryo-EM~\cite{sig98}. This connection was then extended to the general setting~\eqref{cryoEM} in~\cite{bandeira17_moments}.

While these results help in understanding the statistical complexity of the models, they fail to explain why iterative methods such as EM appear to perform well in practice. (For more on EM and maximum likelihood estimation in cryo-EM, see e.g.~\cite{MLE-cryoEM}). The asymptotic expansion of the log likelihood and its connection to least squares moment matching is a step toward understanding why this is so. It is important to note however that understanding the roughness of the landscape of least squares of moments can be a highly non-trivial task. 

Nevertheless, even without a theoretical understanding of the roughness of the moments landscape, the connection between moment methods and more classical iterative approaches such as EM is itself of interest (and unexpected). The connection is especially tight if the method used for inverting the moments is minimization of an objective function of the form $\min_\ta\sum_k\lambda_k\|T_k(\ta) - T_k^*\|^2$. This is one of the methods used in a series of papers in which the moment-based approach was suggested for algebraically structured mixture models~\cite{bendory18_inv, Bendory_2019, ma2020mra2D, lan2020multitargetdetection}. In fact, numerical simulations in these papers demonstrate this connection. The experiments suggest that the two methods have similar performances for MRA and some of its extensions mentioned above. 

A particularly interesting example is that of Heterogeneous MRA, in which there are $M=Kd$ mixture components corresponding to the orbits under cyclic shifts of $K$ vectors in $\R^d$. Statistically, it is known~\cite{bandeira17_moments} that moments up to degree 3 are enough to resolve the model (provided the vectors are generic) even for $K$ growing linearly with $d$. However, numerical experiments, in which the vectors are chosen at random, suggest that algorithms start failing above $K\substack{>\\ \sim}\sqrt d$, 
and that the moment matching landscape has spurious local minima in this regime. It is conceivable that, for vectors chosen randomly from a Gaussian distribution, the third moment matching landscape is benign for $K\ll \sqrt{d}$ and riddled with spurious critical points when $K\substack{>\\ \sim}\sqrt d$.
 If such a phase transition is established, our expansion could then be used as a vehicle to transfer such results into an understanding of the performance of EM and similar methods.\\

\subsubsection{Random centers} Outside of algebraically structured GMMs, another interesting model is a GMM with ``average-case'' centers: take $M$ randomly sampled vectors in $\R^d$ from a Gaussian distribution and consider the GMM with these vectors as centers (and fixed isotropic covariances). The question of whether the mixture can be recovered from third moments is equivalent to low-rank tensor decomposition (the third moment tensor is a $d\times d\times d$ tensor with rank $\leq M$). This problem is believed to exhibit a statistical-to-computational gap: while the low rank decomposition is decidable for $M\ll d^2$ it is believed to be computationally hard for $M \gg d^{3/2}$~\cite{AlexThesis}. This is precisely the regime in which algorithms for moment inversion appear to fail in heterogeneous MRA, since the orbits of $K\sim\sqrt d$ vectors form the centers of a GMM with $M=Kd\sim d^{3/2}$ mixture components.  A characterization of the roughness of the landscape of low-rank tensor decomposition in these regimes could, with the help of our expansion, potentially be transferred to study the performance of EM in such a mixture model. \\

\subsection{Towards finite sample guarantees}
To make the connection rigorous between likelihood optimization and the series of minimization problems~\eqref{momentmatching} in low SNR models, one must prove that the path of gradient descent on the negative log likelihood is well-approximated by the stagewise least squares moment minimization. 
In order to study these algorithms in the finite sample case, one must also quantify the deviation of the sample log likelihood and its first two derivatives from the population log likelihood and its first two derivatives, respectively. 

\cite{fan2020likelihood} carries out this program to draw conclusions about log likelihood optimization in the case of homogeneous orbit retrieval. The tools developed in that paper lay the groundwork for analysis of more general models. In particular, the authors exploit the algebraic structure of the model to reparameterize the gradient descent dynamics in a basis of invariant polynomials under the group action. The varieties~\eqref{varieties} are then level sets of these polynomials, simplifying the analysis of the landscape of~\eqref{momentmatching}. 

We have not rigorously established the connection between the two landscapes for general GMMs or performed a finite sample analysis here, but we expect that doing so should be possible with the help of techniques developed in~\cite{fan2020likelihood}, as well as those used in the present paper for the derivation of the likelihood expansion.

\section{Log Likelihood Asymptotic Expansion}\label{sec:expansion} 

In this section, we prove the asymptotic expansion of the population log likelihood given in Lemma~\ref{mainthm}, highlighting key parts of the argument and deferring technical lemmas to the appendix. Recall that the log likelihood is given by 
\beqs
L(\rho;\,\rho_*) &=  \E_{\ta_*,Z}\log\,\E_{\ta}\lb\e\,\left(-\|\sg Z + \ta_*-\ta\|^2/2\sg^2\right)\rb,
\eeqs
where $\ta\sim\rho,\ta_*\sim\rho_*$, and $\rho,\rho_*$ are compactly supported distributions on $\R^d$. Note that in this section only, we use $\ta,\ta_*$ to denote random variables, rather than $\bmta,\bmta_*$. 
We begin with the following key observation. 
\begin{lemma}
Let $Z'\in\R^d$ be a random vector independent of $Z, \ta,$ and $\ta_*$ such that $Z'\sim\mathcal N(0, I)$. Then
\beq\label{simplification}
L(\rho;\,\rho_*) = -\frac d2 + \E_{\ta_*,Z}\log\,\E_{\ta, Z'}\lb \e\,\lp\frac{1}{\sg}(\ta-\ta_*)^T(Z+iZ')\rp\rb.
\eeq
where 
\end{lemma}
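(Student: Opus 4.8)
The plan is to separate the $Z$-dependence from the $\ta$-dependence in the exponent and then to linearize the leftover Gaussian factor $e^{-\|\ta-\ta_*\|^2/2\sg^2}$ through a Hubbard--Stratonovich (characteristic-function) identity, which is exactly what introduces the auxiliary Gaussian $Z'$. First I would expand the square. Writing $w = \ta - \ta_*$, we have $\|\sg Z + \ta_* - \ta\|^2 = \|\sg Z - w\|^2 = \sg^2\|Z\|^2 - 2\sg Z^Tw + \|w\|^2$, so that
\[
-\frac{\|\sg Z + \ta_*-\ta\|^2}{2\sg^2} = -\frac{\|Z\|^2}{2} + \frac{Z^Tw}{\sg} - \frac{\|w\|^2}{2\sg^2}.
\]
The factor $e^{-\|Z\|^2/2}$ depends only on $Z$, so it pulls out of the inner $\E_{\ta}$ and contributes an additive $-\|Z\|^2/2$ inside the logarithm; taking $\E_{\ta_*,Z}$ and using $\E\|Z\|^2 = d$ produces the constant $-d/2$ and leaves
\[
L(\rho;\,\rho_*) = -\frac d2 + \E_{\ta_*,Z}\log\,\E_{\ta}\left[\e\left(\frac{Z^Tw}{\sg} - \frac{\|w\|^2}{2\sg^2}\right)\right].
\]

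Next I would linearize the remaining Gaussian factor. Since the characteristic function of $Z'\sim\mathcal N(0,I)$ is $\E_{Z'}[\e(it^TZ')] = \e(-\|t\|^2/2)$, taking $t = w/\sg$ gives the identity $\e(-\|w\|^2/2\sg^2) = \E_{Z'}[\e(iw^TZ'/\sg)]$. Because $\e(Z^Tw/\sg)$ does not involve $Z'$, I can fold it inside the $Z'$-expectation and combine the two exponents, using $w^T(Z+iZ') = Z^Tw + i\,w^TZ'$, into $\e(\sg^{-1}w^T(Z + iZ'))$. Substituting back $w = \ta - \ta_*$ and interchanging $\E_{\ta}$ with $\E_{Z'}$ yields exactly the claimed expression.

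The only point requiring care is the interchange of expectations and the fact that the integrand is now complex. I would justify Fubini by noting that $\ta$ is compactly supported and $|\e(iw^TZ'/\sg)| = 1$, so the integrand is absolutely integrable against $\rho \otimes \mathcal N(0,I)$. Moreover, the inner expectation $\E_{\ta,Z'}[\e(\sg^{-1}(\ta-\ta_*)^T(Z+iZ'))]$ is, by construction, equal to the manifestly real and strictly positive quantity $\E_{\ta}[\e(Z^Tw/\sg - \|w\|^2/2\sg^2)]$, so the outer logarithm is well defined and no branch ambiguity arises. This integrability/interchange bookkeeping is the only subtlety; the remainder is the elementary algebra above.
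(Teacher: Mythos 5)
Your proposal is correct and follows essentially the same route as the paper: expand the square, factor out $e^{-\|Z\|^2/2}$, and use the Gaussian characteristic function identity $e^{-\|w\|^2/2\sg^2} = \E_{Z'}[e^{iw^TZ'/\sg}]$ to introduce the auxiliary variable $Z'$ (the paper phrases this as the characteristic function of the scalar $(\ta-\ta_*)^TZ'$, which is the same identity). Your added attention to the Fubini interchange and the reality/positivity of the inner expectation is a harmless refinement of details the paper leaves implicit.
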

\begin{proof}
Consider the random variable $(\ta-\ta_*)^TZ'$. For $\ta,\ta_*$ fixed, it is a mean zero Gaussian with variance $\|\ta_* - \ta\|^2$ and hence has
characteristic function
\beq\label{Zprime}
\E_{Z'}\lb e^{it(\ta-\ta_*)^TZ'}\rb = \e\;\lp-\frac12t^2\|\ta-\ta_*\|^2\rp
\eeq
Now, we have
$$-\frac{1}{2\sg^2}\|\sg Z + \ta_*-\ta\|^2 = -\frac12\|Z\|^2 + \frac{1}{\sg}(\ta - \ta_*)^TZ - \frac{1}{2\sg^2}\|\ta_* - \ta\|^2.$$ Using \eqref{Zprime} with $t=1/\sg$, we then have
\beqs\E_{\ta}\big[\e\,&\left(-\frac{1}{2\sg^2}\|\sg Z + \ta_*-\ta\|^2\right)\big] \\&=\e\,\lp-\frac12\|Z\|^2\rp\E_{\ta,Z'}\lb\e\;\,\lp\frac{1}{\sg}(\ta - \ta_*)^TZ+\frac{1}{\sg}i(\ta - \ta_*)^TZ'\rp\rb\\
&=\e\,\lp-\frac12\|Z\|^2\rp\E_{\ta,Z'}\lb\e\;\,\lp\frac{1}{\sg}(\ta - \ta_*)^T(Z+iZ')\rp\rb.\eeqs
Taking the logarithm and expectation with respect to $\ta_*, Z$ gives \eqref{simplification}
\end{proof}

The remainder of the proof centers around a finite Taylor expansion about $t=0$ of 
\beq f(t, Z, \ta_*) =\log\,\E_{\ta, Z'}\lb \e\,\lp t(\ta-\ta_*)^T(Z+iZ')\rp\rb.\eeq Note that $f$ is $C^{\infty}$ in $t$ for every $Z$. Hence, for every $m$, $f$ has a finite Taylor expansion of the form
\beq\label{rep1}
f(t, Z,\ta_*) = \sum_{p=1}^{2m+1}\kappa_p\frac{t^p}{p!} + \partial_t^{2m+2}f(\xi)\frac{t^{2m+2}}{(2m+2)!}.
\eeq
Here, the $\kappa_j$ and $\xi$ both depend on $Z,\ta_*$, and $|\xi| < |t|.$ This expansion is valid for every $t\in\R$ and $Z, \ta_*\in\R^d$. Substituting \eqref{rep1} into \eqref{simplification} with $t=1/\sg$, we have
\beq\label{Lfiniteexpansion}
L(\rho;\,\rho_*) = -d/2 + \sum_{p=1}^{2m+1}\E_{\ta_*,Z}[\kappa_p]\frac{\sg^{-p}}{p!} + \E_{\ta_*,Z}\left[\partial_t^{2m+2}f(\xi)\right]\frac{\sg^{-2m-2}}{(2m+2)!}.
\eeq
To prove Lemma~\ref{mainthm}, it remains to compute the expectations of $\kappa_p,\,p=1,\dots, 2m+1$ and upper bound the expectation of the error term. The following theorem summarizes the results of these computations.
\begin{thm}\label{mainthm:revised} 
We have $$\E_{\ta_*,Z}[\kappa_{2k+1}]=0,\quad k=0,1,\dots$$ and
\beqs\frac{1}{(2k)!}E_{\ta_*,Z}[\kappa_{2k}]= -\frac{1}{2(k!)}\|T_k - T_k^*\|^2 &+ \mathbbm 1_{k>1}\lp\lla T_k,\; Q_k\rra + r_k\rp, \\&k=1,\dots, m,\eeqs where $r_k\in R_{2k}\lb T_{1:k-1}, T_{1:2k}^*\rb,$ $Q_k\in V_k\lb T_{1:k-1}, T_{1:k-1}^*\rb$, and $Q_k$ is such that $$Q_k(T_{1:k-1}^*, T_{1:k-1}^*) = 0.$$ The error term is bounded above by
$$\bigg|\E_{\ta_*,Z}\left[\partial_t^{2m+2}f(\xi)\right]\bigg|\frac{\sg^{-2m-2}}{(2m+2)!} \leq (m+1)!\lp C\frac{\delta}{\sg}\rp^{2m+2}\lp 1\vee \frac{\delta}{\sg}\rp^{2m+2},$$ where $C$ is a $d$-dependent constant and 
 $$\delta = \max\{\|x-x_*\|\mid x\in \mathrm{supp}(\ta), x_*\in\mathrm{supp}(\ta_*)\}.$$
\end{thm}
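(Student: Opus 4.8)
The plan is to compute the Taylor coefficients $\kappa_p(Z,\ta_*)$ of $f(t,Z,\ta_*)$ together with their expectations over $Z$ and $\ta_*$, and separately to bound the Taylor remainder. The first step is to integrate out the auxiliary variable $Z'$: since $(\ta-\ta_*)^TZ'$ is, conditionally on $\ta,\ta_*$, a centered Gaussian of variance $\|\ta-\ta_*\|^2$, one obtains
\[ M(t):=\E_{\ta,Z'}\lb\e\lp t(\ta-\ta_*)^T(Z+iZ')\rp\rb=\E_{\ta}\lb\e\lp t(\ta-\ta_*)^TZ-\tfrac{t^2}{2}\|\ta-\ta_*\|^2\rp\rb. \]
Recognizing the integrand as the generating function of the tensor Hermite polynomials $\mathrm{He}_n(Z)\in(\R^d)^{\otimes n}$, which satisfy $\sum_n\frac{t^n}{n!}\la \mathrm{He}_n(Z),S^{\otimes n}\ra=\e\lp tS^TZ-\tfrac{t^2}{2}\|S\|^2\rp$ for any $S\in\R^d$, gives the clean moment formula $\mu_n=\la \mathrm{He}_n(Z),A_n\ra$ with $A_n=A_n(\ta_*)=\E_\ta[(\ta-\ta_*)^{\otimes n}]$. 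The cumulants $\kappa_p$ are then the usual partition-sum polynomials in $\mu_1,\dots,\mu_p$, and $\E_{\ta_*,Z}[\kappa_p]/p!$ is the $t^p$-coefficient of $\E_{\ta_*,Z}[f]$.

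For the vanishing of the odd cumulants I would use parity. Replacing $(Z,Z')\mapsto(-Z,-Z')$ fixes the law of $Z'$ and shows $f(t,-Z,\ta_*)=f(-t,Z,\ta_*)$; matching Taylor coefficients gives $\kappa_p(-Z,\ta_*)=(-1)^p\kappa_p(Z,\ta_*)$, so $\E_Z[\kappa_{2k+1}]=0$ for every $\ta_*$, hence $\E_{\ta_*,Z}[\kappa_{2k+1}]=0$.

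The heart of the computation is the even cumulants. Applying $\E_Z$ to the partition-sum expression for $\kappa_{2k}$ and using the Wick/diagram formula for expectations of products of Hermite polynomials — each such expectation being a sum over cross-block perfect matchings, with no within-block contractions — reduces every term to a full contraction of a product of the tensors $A_{n_i}$ of total moment order $\sum n_i=2k$. The coefficient $-\frac{1}{2(k!)}\|T_k-T_k^*\|^2$ arises entirely from the partition of $[2k]$ into two blocks of size $k$: there are $\tfrac12\binom{2k}{k}$ such partitions, each carries M\"obius weight $-1$, and the orthogonality relation $\E_Z\big[\la \mathrm{He}_k(Z),A_k\ra^2\big]=k!\,\|A_k\|^2$ gives a total $-\tfrac{(2k)!}{2\,k!}\|A_k\|^2$; dividing by $(2k)!$ and taking $\E_{\ta_*}$ (expanding $A_k=\sum_j\binom kj(-1)^{k-j}T_j\otimes\ta_*^{\otimes(k-j)}$, whose top term is $T_k$ and whose $\ta_*^{\otimes k}$ term yields $T_k^*$) produces $-\frac{1}{2(k!)}\|T_k-T_k^*\|^2$ plus terms of strictly lower complexity. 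Because $\E_Z[\mu_{2k}]=0$ and, more generally, every surviving term has total moment order exactly $2k$, all remaining contributions lie in $R_{2k}[T_{1:k-1},T^*_{1:2k}]$; collecting those linear in $T_k$ (after removing the $-2\la T_k,T_k^*\ra$ already contained in the square) gives $\la T_k,Q_k\ra$ with $Q_k\in V_k[T_{1:k-1},T^*_{1:k-1}]$, and the rest gives $r_k$. The identity $Q_k(T^*_{1:k-1},T^*_{1:k-1})=0$ I would read off from this explicit form; it reflects that on the variety $T_{1:k-1}=T^*_{1:k-1}$ the $\sg^{-2k}$ coefficient of $-L$ is minimized exactly at $T_k=T_k^*$, consistent with $\rho_*$ being the global maximizer of $L$. (The purely $\ta_*$-dependent pieces that appear at $k=1$ are absorbed into the constant $C(\rho_*)$.)

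The main obstacle is the error bound. Since the remainder is evaluated at a \emph{real} $\xi\in(0,1/\sg)$, and $M(\xi)=\E_\ta\,\e\lp\xi(\ta-\ta_*)^TZ-\tfrac{\xi^2}2\|\ta-\ta_*\|^2\rp>0$ is real and positive, $f(\xi)=\log M(\xi)$ and its derivatives are well defined, with $\partial_t^{2m+2}f(\xi)$ equal to the $(2m+2)$-th cumulant of the $\xi$-tilted law. I would bound it by expressing $\partial_t^{2m+2}\log M$ through the Bell-polynomial (cumulant-from-moment) combinatorics in the tilted moments $M^{(j)}(\xi)/M(\xi)$, each of which is a tilted expectation of a Hermite polynomial in $(\ta-\ta_*)^TZ$ carrying the $\|\ta-\ta_*\|^2$ correction. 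Using $\|\ta-\ta_*\|\le\delta$, the polynomial growth of $\mathrm{He}_j$, and Gaussian moments of $\|Z\|$ when finally taking $\E_Z$, these estimates produce the factor $(C\delta/\sg)^{2m+2}$; the separation of the two scales $\delta\|Z\|/\sg$ and $(\delta/\sg)^2$ in the successive derivatives is what yields the extra $(1\vee \delta/\sg)^{2m+2}$, while the number of partition/Bell terms together with the Gaussian moment growth produces the $(m+1)!$. Obtaining this bound uniformly in $\xi$ with the sharp factorial and the correct two-scale dependence on $\delta/\sg$ is the most delicate step, and I would defer its lengthy but routine estimates to the appendix.
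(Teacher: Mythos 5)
Your reduction of the $Z$-average to Hermite calculus is sound, and in fact runs parallel to the paper: integrating out $Z'$ to get $\mu_n=\la \mathrm{He}_n(Z),A_n\ra$ with $A_n=\E_{\ta}\lb(\ta-\ta_*)^{\otimes n}\rb$, the parity argument for the vanishing of odd cumulants, and the observation that a block of size $>k$ admits no cross-block perfect matching (so such partitions die under $\E_Z$) correctly reproduce what the paper obtains via the complex vector $W=Z+iZ'$ and circular symmetry (Proposition~\ref{EZW}). The error-bound sketch is also essentially the paper's strategy (cumulant formula for $\partial_t^{n}\log M$ plus bounds on the tilted moments $M^{(j)}(\xi)/M(\xi)$), though you defer the estimates.

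The genuine gap is in the even-cumulant computation, at the claim that $-\frac{1}{2(k!)}\|T_k-T_k^*\|^2$ ``arises entirely from the partition of $[2k]$ into two blocks of size $k$.'' Those partitions contribute $c_{k,k}\,k!\,\E_{\ta_*}\|A_k\|^2$ with $c_{k,k}=-\frac12\binom{2k}{k}$, but $\E_{\ta_*}\|A_k\|^2$ is \emph{not} $\|T_k-T_k^*\|^2$ plus lower-complexity terms: expanding $A_k=T_k+\dots+(-1)^k\ta_*^{\otimes k}$, the cross term is $2(-1)^k\la T_k,T_k^*\ra$, which has the wrong sign whenever $k$ is even. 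For $k=2$ the two-block partitions alone give $-\tfrac14\lp\|T_2\|^2+2\la T_2,T_2^*\ra\rp+\dots$, not $-\tfrac14\|T_2-T_2^*\|^2$. The correct cross term $-2\la T_k,T_k^*\ra$ emerges only after summing the contributions of \emph{all} partitions $\lambda\in S_{2k}$ with $\ell_{\max}=k$ (for $k=2$, the partition $\{2,1,1\}$ with $c_{2,1,1}=12$ is essential); what is needed is the identity
\begin{equation*}
\sum_{\substack{\lambda\in S_{2k},\;\lambda\neq\{k,k\}\\ \ell_{\max}=k}}c_\lambda \;=\; -2c_{k,k}\lp 1+(-1)^k\rp,
\end{equation*}
together with the precise bookkeeping of all terms linear in $T_k$. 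This nontrivial fact about the cumulant coefficients is exactly the content of the paper's Lemma~\ref{lma:toFinishForReal} (and Lemma~\ref{app:end-of-proof}), proved there by a generating-function argument relating the partition sum to $\frac{d^k}{dt^k}(1+q_\omega(t))^{-1}\big\vert_{t=0}$ and then to $\frac{d^k}{dt^k}\frac{1+q^*(t)}{1+q(t)}\big\vert_{t=0}$; your proposal neither states nor proves it, and the parenthetical ``after removing the $-2\la T_k,T_k^*\ra$ already contained in the square'' presupposes a square that the two-block partitions do not produce. Relatedly, the property $Q_k(T_{1:k-1}^*,T_{1:k-1}^*)=0$ cannot be ``read off'' from an appeal to $\rho_*$ being the global maximizer of $L$ (a heuristic, not a proof); in the paper it follows from the explicit form $Q_k=\sum_{\ell=1}^{k-1}(T_\ell^*-T_\ell)\otimes\sum_{\lambda\in S_{k-\ell}}d_\lambda\bigotimes_{\ell'\in\lambda}T_{\ell'}$, which is itself an output of the same generating-function computation you are missing.
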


We now outline the main steps of the proof of Theorem~\ref{mainthm:revised}. In Section~\ref{sec:logsumexp}, we obtain expressions for the $\kappa_p$. We do so by taking advantage of generalized moment-cumulant relationships described below. We obtain
\beq\label{rep2}
f(t,Z,\ta_*) = \sum_{p=1}^{\infty}\frac{\kappa_p(Z, \ta_*)}{k!}t^k\quad\forall\,|t|<R_Z,
\eeq
where $R_Z$ is a $Z$-dependent radius of convergence, within which the series converges uniformly in $t$. That  the radius of convergence depends on $Z$ will not be an issue, as we only care about the coefficients $\kappa_p$ of \eqref{rep2}. Indeed, the $\kappa_p$ of \eqref{rep1} and \eqref{rep2} are the same. In Section~\ref{sec:errbound}, we upper bound the error term by explicitly computing $\partial_t^{n+1}f$ and bounding its $Z,\ta_*$-expectation.

In Section~\ref{sec:Zexpect}, we compute the $Z$-expectation of the $\kappa_p$, and in Section~\ref{sec:thetaexpect}, we compute the $\ta_*$-expectations of the $Z$-expectations. \\

In several key steps of the proof, we make use of the polynomials which express the cumulants of a distribution in terms of its moments. We will apply these moment-cumulant relations in a more general setting, in which the ``moments'' are coefficients of any Taylor expansion satisfying certain conditions. 
Before proceeding with the proof, we describe these generalized moment-cumulant relations.

\subsection{Generalized Moment-Cumulant Relations}
Let $X$ be a random variable with moment-generating function 
$$M_X(t) = \E\lb e^{tX}\rb = 1 + \sum_{k=1}^{\infty}\frac{\mu_k}{k!}t^k,\quad |t|<R,$$ and cumulant generating function
$$\kappa_X(t) = \log M_X(t) = \sum_{m=1}^{\infty}\frac{\kappa_m}{m!}t^m.$$ The $\mu_k$ are the moments of $X$, $\mu_k = \E[X^k]$, and the cumulants $\kappa_m$ are given by the following polynomials $\kappa_m(\mu_1,\dots, \mu_m)$:
\beq\label{kappa-mu}
\kappa_m(\mu_1,\dots, \mu_m) = \sum_{\lambda\in S_m}c_{\lambda}\prod_{k\in\lambda}\mu_k,
\eeq
where $S_m$ is the set of all finite lists $\lambda$ of positive integers whose sum is $m$. The $c_{\lambda}$ are universal constants, and we will rarely need to know their exact values. As an example,
$$\kappa_4 = c_4\mu_4 + c_{3,1}\mu_3\mu_1 + c_{2,2}\mu_2^2 + c_{2,1,1}\mu_2\mu_1^2.$$
The moment-cumulant relations \eqref{kappa-mu} are typically applied in the context of random variables. However, they arise in a more general context: for a function $f$ with Taylor series coefficients $\mu_k$, \eqref{kappa-mu} describes how the Taylor series coefficients $\kappa_m$ of $\log f$ relate to the $\mu_k$. More concretely, we have the following result, proved in Proposition~\ref{app:prop:logM} of the Appendix. 
\begin{prop}\label{cum-mom-prop}
Let $M(t)$ be a real analytic function in the neighborhood $|t-t_0| < R$, for which $M(t_0)\neq 0$. If $$\sup_{|t-t_0|<R}\left|\frac{M(t)}{M(0)} - 1\right| < 1,$$ then 
\beq\label{gen-kappa-mu}\frac{d^m}{dt^m}\log M(t)\big\vert_{t=t_0} = \kappa_m\lp \frac{M^{(1)}(t_0)}{M(t_0)},\frac{M^{(2)}(t_0)}{M(t_0)}\dots, \frac{M^{(m)}(t_0)}{M(t_0)}\rp,\eeq where the function $\kappa_m$ is defined by \eqref{kappa-mu}.

In particular, if $u(t)$ is given by the convergent series expansion 
$$u(t) = \sum_{k=1}^{\infty}\frac{\mu_k}{k!}(t-t_0)^k,\quad |t-t_0| < R,$$ and $$\sup_{|t-t_0|<R}|u(t)|< 1,$$ then
$$\log\lp 1+u(t) \rp = \sum_{m=1}^{\infty}\frac{\kappa_m}{m!}(t-t_0)^m, \quad |t-t_0| < R,$$ where the $\kappa_m$ are given by \eqref{kappa-mu}.
\end{prop}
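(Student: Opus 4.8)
The plan is to recognize that the moment--cumulant relations \eqref{kappa-mu} are a purely formal consequence of the identity $M=\exp(\log M)$: their derivation never uses that the $\mu_k$ are genuine moments of a random variable, only that $M$ admits a convergent power series with $M(t_0)\neq 0$. The first step is therefore a reduction to the normalized case $t_0=0$ and $M(0)=1$. The translation $s=t-t_0$ disposes of $t_0$, and replacing $M$ by $M/M(t_0)$ disposes of the normalization: since $\log M = \log M(t_0) + \log\!\big(M/M(t_0)\big)$ and $\log M(t_0)$ is a constant, all derivatives of order $m\ge 1$ are unchanged, while $M^{(k)}(t_0)/M(t_0)$ is precisely the $k$-th derivative at $0$ of the normalized function. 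Hence it suffices to show, for analytic $M$ with $M(0)=1$, that the $m$-th Taylor coefficient of $\log M$ equals $\kappa_m\big(M^{(1)}(0),\dots,M^{(m)}(0)\big)/m!$.

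The core step is to pin down these coefficients by the formal recursion rather than by manipulating the combinatorial sum in \eqref{kappa-mu}. Writing $K=\log M$, setting $\mu_k=M^{(k)}(0)$ and $\kappa_m=K^{(m)}(0)$ with $\mu_0=1$, the relation $M'=M\,K'$ yields, on extracting the coefficient of $t^m$, the identity
\[
\mu_{m+1}=\sum_{i=0}^{m}\binom{m}{i}\kappa_{i+1}\,\mu_{m-i},
\qquad\text{so}\qquad
\kappa_{m+1}=\mu_{m+1}-\sum_{i=0}^{m-1}\binom{m}{i}\kappa_{i+1}\,\mu_{m-i}.
\]
This recursion expresses each $\kappa_m$ as a single universal polynomial in $\mu_1,\dots,\mu_m$, and it is exactly the relation that defines the cumulant polynomials in \eqref{kappa-mu}, obtained there by specializing $M$ to a moment generating function $M_X$ with $\mu_k$ its moments. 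Because the recursion is blind to whether the $\mu_k$ arise from a probability distribution, the universal polynomials it produces coincide with the $\kappa_m$ of \eqref{kappa-mu}. (If one prefers to read \eqref{kappa-mu} as asserted only for genuine moment sequences, the two polynomials agree on the moment vectors of finitely supported measures, which fill an open subset of $\R^m$, and therefore agree identically.) This gives \eqref{gen-kappa-mu} at the level of formal coefficients.

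It remains to turn the formal identity into a genuine analytic one, which is where the hypotheses enter. For the local derivative formula \eqref{gen-kappa-mu} we only need that $\log(1+\cdot)$ has radius of convergence $1$ and that $M-1$ vanishes at $0$, so the composition $\log M=\log\!\big(1+(M-1)\big)$ converges near $0$ and its Taylor coefficients are the formal substitutions computed above. For the ``in particular'' statement I would use the global bound: $\sup_{|t-t_0|<R}|u(t)|<1$ keeps $1+u(t)$ inside the disk $\{|w-1|<1\}$ on which the principal branch of $\log$ is analytic, so $\log(1+u(t))$ is analytic on all of $|t-t_0|<R$ and equals its Taylor series there, with coefficients $\kappa_m/m!$ by the previous step. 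I expect the only real obstacle to be this convergence bookkeeping --- verifying that formal power-series manipulations transfer to honest analytic equalities on the stated disk --- since the algebraic content is entirely captured by the one-line recursion and is essentially tautological once framed formally.
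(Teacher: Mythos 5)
Your proof is correct, and it takes a genuinely different route from the paper's own argument (Proposition~\ref{app:prop:logM} in the appendix). The paper proves \eqref{gen-kappa-mu} by series composition: writing $u(t)=M(t)/M(t_0)-1$, it expands $\log(1+u(t))=\sum_{j\ge1}\frac{(-1)^{j-1}}{j}u(t)^j$, substitutes the power series of $u$, rearranges to collect like powers of $t-t_0$, and observes that the resulting coefficient of $(t-t_0)^m/m!$ is a universal polynomial in $\mu_1,\dots,\mu_m$ of the form \eqref{kappa-mu}; since the same procedure applied to a genuine moment generating function produces the cumulants, the universal constants $c_\lambda$ are the moment--cumulant coefficients. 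You avoid series rearrangement altogether: the Leibniz recursion extracted from $M'=MK'$ determines each $\kappa_m$ as the unique universal polynomial in $\mu_1,\dots,\mu_m$ consistent with it, and you identify this polynomial with the one defined by \eqref{kappa-mu} because moment generating functions satisfy the same recursion, upgrading agreement on moment sequences to a polynomial identity via the (correct) observation that moment vectors of finitely supported measures fill an open subset of $\R^m$. What each approach buys: your recursion argument is purely local, using only $m$-fold differentiability and $M(t_0)\neq0$, so it simultaneously substantiates the unlabeled remark following the proposition (that \eqref{gen-kappa-mu} is nothing more than repeated differentiation of $\log M$), and it needs no justification for rearranging a doubly infinite sum; the paper's composition argument, in exchange, exhibits the partition form \eqref{kappa-mu} with its constants explicitly and delivers the full-disk expansion in a single stroke. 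One caveat applies equally to both proofs: the step ``analytic on $|t-t_0|<R$, hence equal to its Taylor series on all of $|t-t_0|<R$'' is a fact about complex analytic functions on complex disks, not about real analytic functions on intervals; your framing via the principal branch of $\log$ on $\{w:|w-1|<1\}$ is the right one provided $u$ and the bound $\sup|u|<1$ are read on the complex disk (on the real interval alone the conclusion can fail, e.g.\ for $u(t)=\tfrac12\sin(Nt)$ with $N$ large, where $1+u$ vanishes at complex points of modulus $O(1/N)$), and the paper's proof requires the same reading; in the paper's actual application in Section~\ref{sec:logsumexp} the bound does hold on the complex disk, so nothing is lost.
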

\begin{remark}
Note that \eqref{gen-kappa-mu} holds for any function $M(t)$ with $m$ derivatives in a neighborhood of $t_0$, and $M(t_0)\neq 0$. In other words, \eqref{gen-kappa-mu} is also simply the expression for the $m$th derivative of $\log M$ that comes from repeatedly applying the rules of differentiation. 
\end{remark}

In the following four sections of the proof, we will make frequent use of the identity
$$\la x^{\otimes k},\; y^{\otimes k}\ra = (x^Ty)^k, \quad x,y\in\R^d, \;k=1,2,\dots.$$ 
\subsection{Log-Sum-Exp Taylor Series}\label{sec:logsumexp}
Recall that 
\beq f(t, Z, \ta_*) =\log\,\E_{\ta}\E_{Z'}\lb \e\,\lp t(\ta-\ta_*)^T(Z+iZ')\rp\rb.\eeq
In this section, we derive the series expansion
$$f(t,Z,\ta_*) = \sum_{p=1}^{\infty}\frac{\kappa_p(Z, \ta_*)}{p!}t^p\quad\forall\,|t|<R_Z,$$ for $R_Z$ to be specified. Throughout the section, $Z$ and $\ta_*$ are considered constant. Let $W = Z+ iZ'$, $w = \ta-\ta_*$, and $$M(t) = M(t,Z,\ta_*) = \E_{\ta,Z'}\lb \e\,\lp tw^TW\rp\rb$$ so that $f = \log M$. 
Recall that $\delta =\sup\{\|x-x_*\|\mid x\in\mathrm{supp}(\ta), x_*\in\mathrm{supp}(\ta_*)\}$, so that $\|w\|\leq\delta$. Now, 
$$ \e\,\lp tw^TW\rp = \lim_{m\to\infty}\left(1+\sum_{k=1}^m(w^TW)^k\frac{t^k}{k!}\right).$$ The partial sums are each bounded in absolute value by $\e\,\lp\delta t(\|Z\|+\|Z'\|)\rp$ which has finite $\ta,Z'$-expectation. We can therefore interchange summation and expectation to get
\beqs\label{expandpj}
M(t)&=\E_{\ta,Z'}\lb \e\,\lp tw^TW\rp\rb\\ &=1+\sum_{k=1}^{\infty}\E_{\ta,Z'}\left[(w^TW)^k\right]\frac{t^k}{k!}\\
&=  1 + \sum_{k=1}^{\infty}\bigg\la \E_{\ta}\lb w^{\otimes k}\rb,\; \E_{Z'}\left[W^{\otimes k}\right]\bigg\ra\frac{t^k}{k!}\quad\forall\;t\in\R.
\eeqs
Denote the coefficients in this expansion by $$\mu_k = \lla \E_{\ta}\lb w^{\otimes k}\rb,\; \E_{Z'}\left[W^{\otimes k}\right]\rra = \lla \E_{\ta}\lb w^{\otimes k}\rb,\; \E_{\mathrm{Im}\,W}\left[W^{\otimes k}\right]\rra.$$ We now apply the generalized moment-cumulant relations to Taylor expand the logarithm of $M(t)$. In order to do so, we first limit the range of $t$ to ensure $M(t)$ remains in a small neighborhood of $1$. Now, we can write $M(t)$ as
$$M(t)= \E_{\ta}\;\e\,\lp w^TZt -\frac12t^2\|w\|^2\rp.$$ Using that $\|w\|\leq\delta$, we have $$\left|w^TZt -\frac12t^2\|w\|^2\right| \leq \frac12\quad\mathrm{if}\;|t|<R_Z= \frac{1}{\delta\max(4\|Z\|, \sqrt2)}.$$
Therefore, for $|t|<R_Z$ we have
\beqs
\left|M(t) - 1\right|&\leq \E_{\ta}\left|\e\,\lp w^TZt -\frac12t^2\|w\|^2\rp - 1\right|\\
&\leq \frac74 \E_{\ta}\left|w^TZt -\frac12t^2\|w\|^2\right|\leq \frac78,
\eeqs
where we have used the fact that $|e^x-1| < 7|x|/4$ when $|x|<1$. For $|t|<R_Z$ we can thus make use of the generalized moment-cumulant relations of Proposition~\ref{cum-mom-prop} to write
$$f(t) = \log\;M(t) =\sum_{p=1}^{\infty}\kappa_p\frac{t^p}{p!},$$ where
\beqs
\kappa_p &=\kappa_p(Z,\ta_*) = \sum_{\lambda\in S_p}c_{\lambda}\prod_{\ell\in\lambda}\mu_{\ell} \\
&= \sum_{\lambda\in S_p}c_{\lambda}\prod_{\ell\in\lambda}\lla \E_{\ta}\lb w^{\otimes \ell}\rb,\; \E_{\mathrm{Im}\,W}\left[W^{\otimes \ell}\right]\rra.\\
&=\sum_{\lambda\in S_p}c_{\lambda}\lla\bigotimes_{\ell\in\lambda}\E_{\ta}\lb w^{\otimes \ell}\rb,\;\E_{\mathrm{Im}\,W_{\lambda}}\left[\bigotimes_{\ell\in\lambda}W_\ell^{\otimes \ell}\right]\rra. 
\eeqs
In the third line, $W_{\lambda}$ denotes the set $\{W_\ell\mid\ell\in\lambda\}$. We replaced $W=Z+iZ'$ with vectors $W_\ell = Z + iZ_\ell, \,\ell\in\lambda,$ where the $Z_\ell\in\R^d$ are i.i.d. standard normal and independent of $Z$. This allowed us to write the product of expectations as the expectation of a product.

\subsection{Error Term Upper Bound}\label{sec:errbound} In this section, the constant $C$ depends on $d$ only and may change value from line to line. Recall that the error term is given by $$\E_{Z, \ta_*}\lb\partial_t^{2m+2}f(\xi)\rb\frac{\sg^{-2m-2}}{(2m+2)!},$$ for $\xi$ such that $0<\xi<1/\sg$. To bound it, we first compute $\partial_t^{n}f$ at a generic point $t$ (with $n=2m+2$). Recall that $w=\ta-\ta_*$, $W=Z+iZ'$, and $f=\log M$ for $M(t, Z,\ta_*) = \E_{\ta,Z'}\lb \e\,\lp tW^Tw\rp\rb$. By \eqref{gen-kappa-mu}, we have
\beqs\label{df}
\partial_t^nf &= \kappa_n\left(\partial_t^1M/M,\dots, \partial_t^nM/M\right)\\
&= \sum_{\lambda\in S_n}c_{\lambda}\prod_{\ell\in\lambda}\frac{\partial_t^\ell M}{M}\eeqs at any point $t$, since $M(t)$ is never zero. We therefore have the error bound
\beq\label{expabsdf}
\frac{\sg^{-n}}{n!}\E_{Z,\ta_*}\left|\partial_t^nf\right| \leq  \frac{\sg^{-n}}{n!}\E_{\ta_*}\sum_{\lambda\in S_n}|c_{\lambda}|\,\E_Z\prod_{\ell\in\lambda}\left|\frac{\partial_t^\ell M}{M}\right|.\eeq

We now compute the $t$-derivative of $M$ in the following indirect way, which will yield an expression that is simpler to bound. Let $t=t_0 + s$; we will write $M$ in terms of $s$ and take its derivative at $s=0$. We have $$M(t_0+s) = \E_{\ta}\lb e^{(t_0+s)w^TZ}\E_{Z'}\;e^{i(t_0+s)w^TZ'}\rb,$$ and note that 
$$\E_{Z'}\lb e^{i(t_0+s)w^TZ'}\rb = e^{-t_0s\|w\|^2}\E_{Z'}\lb e^{it_0w^TZ'}\rb\E_{Z'}\lb e^{isw^TZ'}\rb.$$ Therefore, 
\beqs 
M(t_0+s) &= \E_{\ta}\lb e^{-t_0s\|w\|^2}\E_{Z'}\lb e^{t_0w^TW}\rb\E_{Z'}\lb e^{sw^TW}\rb\rb\\
&= \E_{\ta}\lb \E_{Z'}\lb e^{t_0w^TW}\rb\E_{Z'}\lb e^{sw^T(W-t_0w)}\rb\rb.\eeqs We now take the derivative at $s=0$, passing it inside both expectations. This is justified since the resulting derivative is absolutely integrable. We obtain
\beqs\partial_t^\ell M(t_0) = \E_{\ta}\lb \E_{Z'}\lb e^{t_0w^TW}\rb\E_{Z'}\lb \lp w^T(W-t_0w)\rp^\ell\rb\rb.\eeqs
Noting that $\E_{Z'}\lb e^{tw^TW}\rb >0,$ we have
\beqs\label{M-bound}
\left|\frac{\partial_t^\ell M(t)}{M(t)}\right| &\leq \frac{\E_{\ta}\lb \E_{Z'}\, e^{tw^TW}\E_{Z'}\left| w^T(W-tw)\right|^\ell\rb}{\E_{\ta}\lb \E_{Z'}\, e^{tw^TW}\rb}\\
&\leq\sup_{\ta}\E_{Z'}\left| w^T(W-tw)\right|^\ell \\
&\leq \delta^\ell\E_{Z'}(\|W\| + |t|\delta)^\ell
\eeqs
Now, fix $\lambda\in S_n$, and let $W_\ell = Z+iZ'_\ell,\,\ell\in\lambda$, where $Z'_\ell$ are independent copies of $Z'$. We have
\beqs
\E_Z\left|\prod_{\ell\in\lambda}\frac{\partial_t^\ell M}{M}\right| &\leq\E_Z\prod_{\ell\in\lambda}\delta^\ell\E_{\mathrm{Im}\,W_\ell}(\|W_\ell\| + |t|\delta)^\ell = \delta^n\E\prod_{\ell\in\lambda}(\|W_\ell\| + |t|\delta)^\ell\\
&\leq (2\delta)^n\E\lp \frac12(|t|\delta)^n + \frac12\sum_{\ell\in\lambda}\frac{\ell}{n}\|W_\ell\|^n\rp = (2\delta)^n\lp\frac12(|t|\delta)^n + \frac12\E\|W\|^n \rp\\
&\leq \lp\frac n2\rp!(C\delta)^n(1\vee |t|\delta)^n
\eeqs
where expectation with no subscripts denotes expectation with respect to all random variables and $C$ is a constant depending on $d$ only. Now, note that this upper bound is independent of $\lambda$.  We also have
$$\sum_{\lambda\in S_n}|c_\lambda| \leq n^n\leq n!e^n.$$ This follows from ~\cite{fan2020likelihood}, in which it was shown that the sum of the absolute values of the coefficients arising in an $n$th order multivariate cumulant is bounded above by $n^n$. It is straightforward to see that this bound applies to univariate cumulants as well. Substituting these bounds in the error bound \eqref{expabsdf} and using $|t|=|\xi|<1/\sg$, we obtain
\beqs\frac{\sg^{-n}}{n!}\E_{Z, \ta_*}\left|\partial_t^nf\right|&\leq \frac{\sg^{-n}}{n!}\lp\frac n2\rp!(C\delta)^n(1\vee |t|\delta)^n\sum_{\lambda\in S_n}|c_\lambda| \\
&\leq \lp\frac n2\rp!\lp C\frac{\delta}{\sg}\rp^n\lp1\vee \frac{\delta}{\sg}\rp^n.
\eeqs
Taking $n=2m+2$ gives the desired error bound. 

\subsection{Z-Expectation of Cumulants}\label{sec:Zexpect} We will use the following notation in this section: for a list $\lambda$, we define $\ell_{\max}$ as the maximum element in the list, and $\lambda\setminus\ell$ is the list with one copy of $\ell$ removed. Now, recall that $w=\ta-\ta_*$, and 
$$\kappa_p = \sum_{\lambda\in S_p}c_{\lambda}\lla\bigotimes_{\ell\in\lambda}\E_{\ta}\lb w^{\otimes \ell}\rb,\;\E_{\mathrm{Im}\,W_{\lambda}}\left[\bigotimes_{\ell\in\lambda}W_\ell^{\otimes \ell}\right]\rra,$$ where $W_{\lambda}$ denotes the set $\{W_\ell\mid\ell\in\lambda\}$ and $W_\ell=Z+iZ_\ell$, where $Z, Z_\ell,\,\ell\in\lambda$ are independent standard normal vectors in $\R^d$. 
We therefore have
$$\E_{\ta_*,Z}\lb\kappa_p\rb = \sum_{\lambda\in S_p}c_{\lambda}\lla\E_{\ta_*}\lb\bigotimes_{\ell\in\lambda}\E_{\ta}\lb w^{\otimes \ell}\rb\rb,\;\E_{W_{\lambda}}\left[\bigotimes_{\ell\in\lambda}W_\ell^{\otimes \ell}\right]\rra.$$ We make two initial observations:
\begin{enumerate}
\item If $p=2k+1$ is odd, then the expectation on the right side of the inner product is zero, so that $\E[\kappa_{2k+1}]=0$. This follows from the fact that all Gaussian random variables appearing in the expectation are mean zero, and the total order of the product is $2k+1$. 
\item Suppose $p=2k$. If $\lambda$ is such that all $\ell\in\lambda$ are less than $k$, then the $\ta_*$ expectation on the left is a polynomial only of moment tensors $T_p$ with $p<k$ (though it could depend on moment tensors $T_p^*$ for $p$ up to $2k$.) This is proved in Lemma~\ref{lemma:thstar-expect} in the appendix. Therefore, the inner product for such $\lambda$s will contribute only to the remainder polynomial $r_k(T_{1:k-1}, T_{1:2k}^*).$
\end{enumerate}
We therefore consider $p=2k$, and discard from the sum those $\lambda\in S_{2k}$ in which all numbers are less than $k$. Let $\ell_{\max}$ denote the maximum number in a list $\lambda$. We have 
\beqs\label{EkappaZ}\E_{\ta_*,Z}\lb\kappa_{2k}\rb = \E_{\ta_*}\sum_{\substack{\lambda\in S_{2k}\\\ell_{\max}\geq k}}c_{\lambda}&\lla\bigotimes_{\ell\in\lambda}\E_{\ta}\lb w^{\otimes \ell}\rb,\;\E\left[\bigotimes_{\ell\in\lambda}(Z+iZ_\ell)^{\otimes \ell}\right]\rra\\& + r_k(T_{1:k-1}, T_{1:2k}^*).\eeqs

Using results in \cite{moments}, we now prove the following proposition.
\begin{prop}\label{EZW}
Let $\lambda\in S_{2k}$ be such that $\ell_{\max}$, the maximum of the list, is at least $k$. Let $Z, Z_\ell,\ell\in\lambda$ be i.i.d. standard normal vectors in $\R^d$. Then
\beq
\E\left[\bigotimes_{\ell\in\lambda}(Z+iZ_\ell)^{\otimes \ell}\right] = \begin{cases}0,\quad &\ell_{\max}>k\\
2^{-k}\E\left[W^{\otimes k}\otimes \overline W^{\otimes k}\right],\quad &\ell_{\max}=k,\end{cases}
\eeq
where $W = Z+iZ_{\ell_{\max}}$. 
\end{prop}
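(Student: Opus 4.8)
The plan is to evaluate this order-$2k$ Gaussian tensor moment by Wick's theorem (Isserlis' theorem), for which the results of \cite{moments} supply the needed machinery. Group the $2k$ tensor slots into $n$ \emph{blocks}, one for each part (occurrence) of $\lambda$, the block attached to a part $\ell$ carrying the factor $(Z+iZ_\ell)^{\otimes\ell}$. For a fixed index assignment $(a_1,\dots,a_{2k})$ the corresponding entry is a product of $2k$ mean-zero jointly Gaussian linear forms, so its expectation is the sum over all pairings of the $2k$ slots of the products of pairwise covariances. The computation that drives everything is the covariance of two slots: because the real part $Z$ is shared by all blocks while the imaginary parts $Z_\ell$ are independent across blocks,
\[
\E\big[(Z+iZ_\ell)_a\,(Z+iZ_{\ell'})_b\big]=\begin{cases}\delta_{ab}-\delta_{ab}=0,&\text{slots in the same block},\\ \delta_{ab},&\text{slots in different blocks}.\end{cases}
\]
In particular any pairing that matches two slots of the same block contributes zero.

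Hence only pairings with \emph{no} within-block pair survive, each contributing $\prod_{(s,s')}\delta_{a_sa_{s'}}$, and the proposition reduces to a statement about such pairings. Let $B$ be a block of maximal size $\ell_{\max}$. Each of its $\ell_{\max}$ slots must be matched to a slot outside $B$, and there are only $2k-\ell_{\max}$ such slots. When $\ell_{\max}>k$ we have $\ell_{\max}>2k-\ell_{\max}$, so no admissible pairing exists and the tensor vanishes identically, which is the first case.

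For the case $\ell_{\max}=k$ the complement of $B$ has exactly $k$ slots, so every admissible pairing is forced to be a perfect matching between the $k$ slots of $B$ and the $k$ slots of its complement; in particular no two complement slots are ever paired, even when the complement splits into several parts of $\lambda$, since all of them are consumed as partners of $B$. I then run the same Wick expansion on the right-hand side $2^{-k}\E[W^{\otimes k}\otimes\overline W^{\otimes k}]$ with $W=Z+iZ_{\ell_{\max}}$: there $\E[W_aW_b]=\E[\overline W_a\overline W_b]=0$ while $\E[W_a\overline W_b]=2\delta_{ab}$, so only matchings between the $W^{\otimes k}$ slots and the $\overline W^{\otimes k}$ slots survive, each such matching contributing $2^k\prod_{(s,s')}\delta_{a_sa_{s'}}$, whence the prefactor $2^{-k}$ cancels the $2^k$. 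Identifying the slots of $B$ with those of $W^{\otimes k}$ and the complement slots with those of $\overline W^{\otimes k}$ matches the two expressions term by term.

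The remaining point to verify is that this identification is unambiguous: both sides are symmetric under permutations within the two groups of $k$ slots — the right-hand side manifestly, and the left-hand side because summing over all matchings of $B$ with its complement symmetrizes the complement indices — so the internal block structure of the complement, and any tie among several parts equal to $k$, are immaterial. I expect the main obstacle to be exactly this bookkeeping: combining the vanishing of within-block covariances with the size constraint to see that the admissible pairings collapse precisely onto the $(k,k)$ cross-matchings of the right-hand side, and checking the symmetry that makes the slot identification between the two different block structures well defined.
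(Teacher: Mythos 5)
Your proof is correct, but it takes a genuinely different route from the paper. You expand the Gaussian tensor moment directly by Wick/Isserlis (valid for complex linear combinations of jointly Gaussian reals by multilinearity), and the whole argument rests on the two covariance identities $\E[(Z+iZ_\ell)_a(Z+iZ_\ell)_b]=0$ (within a block) and $\E[(Z+iZ_\ell)_a(Z+iZ_{\ell'})_b]=\delta_{ab}$ (across blocks), after which the dichotomy is pure pairing combinatorics: a block of size $\ell_{\max}>k$ cannot find enough partners outside itself, and when $\ell_{\max}=k$ the admissible pairings are exactly the $(k,k)$ cross-matchings, which you then match term by term against the Wick expansion of $2^{-k}\E[W^{\otimes k}\otimes\overline W^{\otimes k}]$ using $\E[W_aW_b]=0$, $\E[W_a\overline W_b]=2\delta_{ab}$. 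The paper instead black-boxes Proposition 2 of \cite{moments} on circularly symmetric vectors: since the family $(Z+iZ_\ell)_{\ell\in\lambda}$ is not jointly circularly symmetric, it introduces auxiliary independent Gaussians $Z'_\ell$, sets $W_\ell=Z'_\ell+iZ_\ell$, rewrites $Z+iZ_\ell=\tfrac12(W+W_\ell+\overline W-\overline W_\ell)$, and then counts conjugated versus unconjugated factors to kill the $\ell_{\max}>k$ case and isolate $\E[W^{\otimes k}\otimes\overline W^{\otimes k}]$. The two arguments are morally the same counting (circular symmetry is precisely the vanishing of pseudo-covariances that drives your Wick computation), but yours is more elementary and self-contained — no auxiliary randomness, no appeal to the external proposition — while the paper's choice keeps its toolkit uniform, since the same result from \cite{moments} is reused to evaluate the surviving expectation in Proposition~\ref{app:ZexpectFinal}. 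One point you rightly flag, which the paper handles only implicitly: the stated tensor identity requires the convention that the $\ell_{\max}$ block occupies the first $k$ slots (the paper silently writes $W^{\otimes\ell_{\max}}$ first), since the sum over cross-matchings is symmetric within each group of $k$ slots but does depend on which slots form the distinguished group; your explicit symmetry remark closes that gap cleanly.
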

The proof relies on the following result of \cite{moments} 
\begin{prop}\label{momentprop}[Proposition 2 of \cite{moments}]
Let $V = \lp V^1,\dots, V^p\rp\in\mathbb C^p$ be jointly circularly symmetric. Then $$\E\left[\lp V^1\rp^{k_1}\dots \lp V^p\rp^{k_p}\overline {V^1}^{m_1}\dots\overline {V^p}^{m_p}\right]\neq 0$$ only if $k_1 + \dots + k_p = m_1 + \dots + m_p$. 
\end{prop}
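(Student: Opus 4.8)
The plan is to exploit the defining invariance of joint circular symmetry directly, with no appeal to Gaussianity or to any combinatorial structure. By assumption the vector $V$ satisfies $V \stackrel{d}{=} e^{i\phi}V$ for every real $\phi$, where the same phase $\phi$ multiplies every coordinate simultaneously. Consequently the moment $M := \E\lb \lp V^1\rp^{k_1}\cdots \lp V^p\rp^{k_p}\,\overline{V^1}^{m_1}\cdots\overline{V^p}^{m_p}\rb$ is unchanged if each $V^j$ is replaced by $e^{i\phi}V^j$.

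First I would substitute $e^{i\phi}V$ into the moment and pull out the scalar phases. Since $\lp e^{i\phi}V^j\rp^{k_j} = e^{i\phi k_j}\lp V^j\rp^{k_j}$ and $\overline{e^{i\phi}V^j}^{\,m_j} = e^{-i\phi m_j}\,\overline{V^j}^{m_j}$, the product inside the expectation acquires an overall scalar factor $e^{i\phi D}$ with $D = \sum_{j}k_j - \sum_j m_j$. Because this factor is deterministic it comes outside the expectation, so invariance of the distribution of $V$ gives
$$M = e^{i\phi D}\,M \qquad \text{for all } \phi\in\R.$$

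The conclusion is then immediate: if $M\neq 0$ we may cancel it to obtain $e^{i\phi D}=1$ for every $\phi$, which forces $D=0$, i.e. $\sum_j k_j = \sum_j m_j$. Contrapositively, whenever $\sum_j k_j \neq \sum_j m_j$ the moment $M$ must vanish, which is exactly the claimed statement.

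There is essentially no analytic obstacle here beyond bookkeeping. The one point to verify is that $M$ is finite, so that the deterministic phase can legitimately be factored through the expectation; in the intended application the coordinates are of the form $V^j = Z + iZ_\ell$ with $Z,Z_\ell$ standard Gaussian, all of whose mixed moments exist, so finiteness is automatic and the interchange of $e^{i\phi D}$ with $\E$ requires nothing more than linearity of expectation.
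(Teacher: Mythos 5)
Your proof is correct. Note that the paper itself gives no proof of this proposition: it is imported verbatim as Proposition~2 of the cited reference on complex Gaussian moments, so there is nothing in the text to compare against line by line. Your argument is the standard and essentially canonical one: joint circular symmetry means the \emph{same} phase $e^{i\phi}$ multiplies every coordinate simultaneously, the distributional identity $V \stackrel{d}{=} e^{i\phi}V$ turns the mixed moment $M$ into $e^{i\phi D}M$ with $D = \sum_j k_j - \sum_j m_j$, and $M = e^{i\phi D}M$ for all $\phi$ forces $M=0$ unless $D=0$. The two things you were right to be careful about are exactly the ones that matter: (i) the phase must be common to all coordinates, which is what ``jointly'' circularly symmetric supplies (coordinate-wise circular symmetry alone would not suffice), and (ii) the expectation must exist so the deterministic phase can be pulled out; in the paper's application the coordinates are affine in Gaussians, so all mixed moments are finite. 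Since your argument needs no Gaussianity at all, it is in fact slightly more general than the Gaussian-moment context of the cited reference, which is a small bonus rather than a defect.
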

\begin{proof}[Proof of Proposition~\ref{EZW}]
Note that each $Z+iZ_\ell$ is a circularly symmetric random vector. However, $(Z+iZ_\ell)_{\ell\in\lambda}\in\mathbb C^{|\lambda|d}$ is not jointly circularly symmetric, because the real parts are correlated while the imaginary parts are independent. We can nevertheless take advantage of Proposition~\ref{momentprop} as follows:  first, define $Z'_\ell\in\R^d,\,\ell\in\lambda\setminus\ell_{\max}$ to be standard normal and independent of $Z, Z_\ell,\,\ell\in\lambda$ and of each other. Define $W = Z+ iZ_{\ell_{\max}}$, and $W_\ell= Z'_\ell+iZ_\ell,\,\ell\in\lambda\setminus\ell_{\max}$. Thus, $(W;\,(W_\ell)_{\ell\in\lambda\setminus\lambda_{\max}})\in\mathbb C^{|\lambda|d}$ is circularly symmetric. Now, for $\ell\in\lambda\setminus\ell_{\max}$ we write $$Z+iZ_\ell = \frac12(W+\overline W) + \frac12(W_\ell - \overline W_\ell) = \frac12(W+W_\ell + \overline{W}-\overline{W_\ell}).$$ With this notation, we have
 $$\bigotimes_{\ell\in\lambda}(Z+iZ_\ell)^{\otimes \ell} = W^{\otimes \ell_{\max}}\otimes\lp\frac12\rp^{2k-\ell_{\max}}\bigotimes_{\ell\in\lambda\setminus\ell_{\max}}(W+W_\ell +\overline W - \overline W_\ell)^{\otimes \ell}.$$ Note that each entry in this tensor is of the form given in the proposition, i.e. it is a product of some number of conjugated and unconjugated complex random variables which are jointly circularly symmetric Gaussian. We count how many conjugated and unconjugated variables appear in a typical entry of this tensor. There are at least $\ell_{\max}$ unconjugated variables (from the $\ell_{\max}$ copies of $W$) , and at most $2k - \ell_{\max} \leq \ell_{\max}$ conjugated variables. Thus, we immediately see that the expectation is zero if $\ell_{\max}  > k$. If $\ell_{\max}=k$, the expectations of all terms with fewer than $k$ conjugated variables are zero. Writing $k$ instead of $\ell_{\max}$, we then have
$$\E\left[W^{\otimes k}\otimes\bigotimes_{\ell\in\lambda\setminus k}(W+W_\ell +\overline W - \overline W_\ell)^{\otimes \ell}\right]=\E\left[W^{\otimes k}\otimes\bigotimes_{\ell\in\lambda\setminus k}(\overline W- \overline W_\ell)^{\otimes \ell}\right].$$ But note that $W_\ell$ is independent of $W$, so the expectation of products involving both entries of $W$ and entries of $\overline W_\ell$ will split up into a product of two expectations, one of which involves only entries of $\overline W_\ell$. But this expectation is zero by the proposition, since it involves only conjugated variables. Hence only the products involving $W$ alone survive. We obtain
$$\E\left[W^{\otimes k}\otimes\bigotimes_{\ell\in\lambda\setminus k}(\overline W - \overline W_\ell)^{\otimes \ell}\right] = \E\left[W^{\otimes k}\otimes\bigotimes_{\ell\in\lambda\setminus k}\overline W^{\otimes \ell}\right] = \E\left[W^{\otimes k}\otimes \overline W^{\otimes k}\right].$$ We recall the additional factor $\lp\frac12\rp^{2k-\ell_{\max}}=2^{-k}$ to conclude.
\end{proof}
Substituting this back into the cumulant formula, we have 
\beqs\E_Z\lb\kappa_{2k}\rb&=2^{-k}\sum_{\substack{\lambda\in S_{2k}\\\ell_{\max}=k}}c_{\lambda}\lla \E_{\ta}\lb w^{\otimes k}\rb\otimes\bigotimes_{\ell\in\lambda\setminus k}\E_{\ta}\lb w^{\otimes \ell}\rb,\;\E\left[W^{\otimes k}\otimes \overline W^{\otimes k}\right]\rra\\
&= 2^{-k}\sum_{\substack{\lambda\in S_{2k}\\\ell_{\max}=k}}c_{\lambda}\E_W\lb\E_{\ta}\lb(w^TW)^k\rb\prod_{\ell\in\lambda\setminus k}\E_{\ta}\lb(w^T\overline W)^{\ell}\rb\rb.\eeqs
This expectation is straightforward to evaluate (see Proposition~\ref{app:ZexpectFinal} in the Appendix), and we obtain
\beqs\label{EkappaAgain}
\E_Z\left[\kappa_{2k}\right] &= k!\sum_{\substack{\lambda\in S_{2k}\\\ell_{\max}=k}}c_{\lambda}\lla \E_{\ta}\lb w^{\otimes k}\rb, \; \bigotimes_{\ell\in\lambda\setminus k}\E_{\ta}\lb w^{\otimes \ell}\rb\rra \\
&= k!\lla \E_{\ta}\lb w^{\otimes k}\rb, \; \sum_{\substack{\lambda\in S_{2k}\\\ell_{\max}=k}}c_{\lambda}\bigotimes_{\ell\in\lambda\setminus k}\E_{\ta}\lb w^{\otimes \ell}\rb\rra.
\eeqs
\subsection{$\ta_*$-Expectation of Cumulants}\label{sec:thetaexpect}


Recall that $w = \ta - \ta_*$, and that
\beq
T_\ell = \E\lb\ta^{\otimes \ell}\rb,  \quad T_\ell^* = \E\lb\ta_*^{\otimes \ell}\rb, \quad \ell=1,2,\dots.
\eeq
We also remind the reader of the definitions of total moment order and the spaces $V_k, R_k$:
\begin{defn*} Consider $$S = \bigotimes_{i=1}^n S_{k_i}=S_{k_1}\otimes S_{k_2}\otimes\dots\otimes S_{k_n},$$ where $S_{k_i}$ is either $T_{k_i}$ or $T_{k_i}^*$. We define the \textbf{total moment order} of $S$ to be $\sum_{i=1}^n k_i,$ i.e. the sum of all moment orders. We also say that the total moment order of each entry of $S$ is $\sum_{i=1}^n k_i$; in other words, the total moment order of products of entries of moment tensors is the sum of all moment orders in the product. 
\end{defn*}
\begin{defn*}
We define $V_k\lb T_{1:m}, T_{1:n}^*\rb$ as the set of all constant coefficient linear combinations of outer products of moment tensors $T_j, j\leq m,\, T_\ell^*,\ell\leq n$, of total moment order $k$. We define $R_k\lb T_{1:m}, T_{1:n}^*\rb$ as the set of all constant coefficient linear combinations of products of entries of moment tensors $T_j,T_\ell^*,j\leq m,\ell\leq n$, of total moment order $k$.
\end{defn*}

 In the previous section, we have shown that 
\beq\label{EZkappa}\E_Z\left[\kappa_{2k}\right] = k!\lla \E_{\ta}\lb w^{\otimes k}\rb, \; \sum_{\substack{\lambda\in S_{2k}\\\ell_{\max}=k}}c_{\lambda}\bigotimes_{\ell\in\lambda\setminus k}\E_{\ta}\lb w^{\otimes \ell}\rb\rra + s(T_{1:k-1},\ta_*),\eeq where $s$ is such that $\E_{\ta_*}\lb s(T_{1:k-1},\ta_*)\rb \in R_k\lb T_{1:k-1}, T_{1:2k}^*\rb$.  For brevity, define the tensor on the right of \eqref{EZkappa} by $J_k$, that is
$$J_k = \sum_{\substack{\lambda\in S_{2k}\\\ell_{\max}=k}}c_{\lambda}\bigotimes_{\ell\in\lambda\setminus k}\E_{\ta}\lb w^{\otimes \ell}\rb.$$ To get a sense of $J_k$, let us write out $J_3$:
$$J_3=c_{3,3}\E_{\ta}\lb w^{\otimes 3}\rb + c_{3,2,1}\E_{\ta}\lb w^{\otimes 2}\rb\otimes \E_{\ta}\lb w\rb + c_{1,1,1}\E_{\ta}\lb w\rb\otimes \E_{\ta}\lb w\rb\otimes \E_{\ta}\lb w\rb$$

In this section, we take the $\ta_*$-expectation of the inner product in \eqref{EZkappa}, that is, of $\la \E_{\ta}\lb w^{\otimes k}\rb, J_k\ra$.  We are interested only in terms involving $T_k$, the highest order moment tensor of $\ta$ appearing in this inner product.  Therefore, we can separate out $T_k$ on each side of the inner product and discard terms in which $T_k$ appears on neither side. The discarded terms will collectively be denoted $s(T_{1:k-1},\ta_*)$. Note that the $\ta$-moment tensors which arise upon expanding $\E_{\ta}\lb(\ta-\ta_*)^{\otimes p}\rb$ are $T_j,\,j\leq p$. Therefore, $T_k$ appears only through $\E_{\ta}\lb(\ta-\ta_*)^{\otimes k}\rb$, which arises on the left of the inner product with coefficient $1$ and on the right with coefficient $c_{k,k}$. We have
\beqs
\bigg\la\E_{\ta}\lb w^{\otimes k}\rb, \; J_k\bigg\ra &= \bigg\la \E_{\ta}\lb \ta^{\otimes k}\rb, \; J_k\bigg\ra
+ \bigg\la \E_{\ta}\lb w^{\otimes k}-\ta^{\otimes k}\rb, \; c_{k,k}\E_{\ta}\lb \ta^{\otimes k}\rb\bigg\ra + s(T_{1:k-1},\ta_*)\\
&= \bigg\la \E_{\ta}\lb \ta^{\otimes k}\rb, \; J_k + c_{k,k}\E_{\ta}\lb w^{\otimes k}-\ta^{\otimes k}\rb\bigg\ra+ s(T_{1:k-1},\ta_*).
\eeqs
We may then write
\beqs\label{toFinish}
\E_{\ta_*}\E_Z\lb\kappa_{2k}\rb &= k!\E_{\ta_*}\bigg\la\E_{\ta}\lb w^{\otimes k}\rb, \; J_k\bigg\ra \\
&= k!\E_{\ta_*} \bigg\la \E_{\ta}\lb \ta^{\otimes k}\rb, \; J_k + c_{k,k}\E_{\ta}\lb w^{\otimes k}-\ta^{\otimes k}\rb\bigg\ra +  \E_{\ta_*}\lb s(T_{1:k-1},\ta_*) \rb \\
&= k!\E_{\ta'}\E_{\ta_*}\bigg\la \ta'^{\otimes k}, \; J_k + c_{k,k}\E_{\ta}\lb w^{\otimes k}-\ta^{\otimes k}\rb\bigg\ra +r_k(T_{1:k-1}, T_{1:2k}^*).
\eeqs
To get the last line, we substitute $\E_{\ta}\lb \ta^{\otimes k}\rb$ with $\E_{\ta'}\lb \ta'^{\otimes k}\rb$, where $\ta'$ is an independent copy of $\ta$. We then pull the $\ta'$-expectation out of the inner product. The fact that $\E_{\ta_*}\lb s(T_{1:k-1},\ta_*)\rb = r_k(T_{1:k-1}, T_{1:2k}^*)$ is proved in the appendix, see Lemma~\ref{r-nu}. 

The following lemma will complete the proof of Theorem~\ref{mainthm:revised} .
\begin{lemma}\label{lma:toFinish} Let $x\in\R^d$ be constant. Then
\beq\label{lma:eqn:toFinish}\E_{\ta_*}\bigg\la x^{\otimes k}, \; J_k + c_{k,k}\E_{\ta}\lb w^{\otimes k}-\ta^{\otimes k}\rb\bigg\ra = \bigg\la x^{\otimes k},\; -2c_{k,k}T_k^* + c_{k,k}T_k + Q_k\bigg\ra,\eeq where $Q_k\in V_k\lb T_{1:k-1}, T_{1:k-1}^*\rb$. Moreover, $Q$ is independent of $x$ and satisfies $Q_{k}(T_{1:k-1}^*, T_{1:k-1}^*)=0$.
\end{lemma}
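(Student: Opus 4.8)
The plan is to contract the whole identity against $x^{\otimes k}$, reducing the tensor statement to a one–dimensional generating–function computation. Write $A = x^T\ta$ and $b = x^T\ta_*$. Using $\la x^{\otimes p},\bigotimes_i S_{k_i}\ra = \prod_i\la x^{\otimes k_i}, S_{k_i}\ra$ (the additive extension of $\la x^{\otimes k}, y^{\otimes k}\ra = (x^Ty)^k$) together with $\la x^{\otimes\ell},\E_\ta[w^{\otimes\ell}]\ra = \E_\ta[(A-b)^\ell] =: \nu_\ell(b)$, the left–hand side of \eqref{lma:eqn:toFinish} becomes the scalar
\[
F(x) = \E_{\ta_*}\Big[\sum_{\lambda:\,\ell_{\max}=k}c_\lambda\prod_{\ell\in\lambda\setminus k}\nu_\ell(b)\Big] + c_{k,k}\,\E_{\ta_*}\big[\nu_k(b)-a_k\big],
\]
where $a_j := \la x^{\otimes j},T_j\ra = \E[A^j]$ and $a_j^* := \la x^{\otimes j},T_j^*\ra = \E[b^j]$. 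Expanding each $\nu_\ell(b)$ binomially and taking $\E_{\ta_*}$ (which sends $b^p\mapsto a_p^*$) shows every monomial that appears has the shape $a_q^*\prod_i a_{j_i}$ with $q+\sum_i j_i = k$; since $a_q^*\prod_i a_{j_i} = \la x^{\otimes k}, T_q^*\otimes\bigotimes_i T_{j_i}\ra$, each such monomial is the $x$–contraction of an element of $V_k$. Thus it suffices to isolate the coefficients of $a_k$ and $a_k^*$ in $F$ and to show that the remaining part vanishes under $a_j=a_j^*$.

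The key step is a closed form for $F$. Let $\Phi(t)=\E_\ta[e^{tA}]=\sum_{j\ge0}a_j t^j/j!$ and $\Phi_*(t)=\E_{\ta_*}[e^{tb}]=\sum_{j\ge0}a_j^* t^j/j!$. I first record the purely combinatorial identity
\[
\sum_{\lambda:\,\ell_{\max}=k}c_\lambda\prod_{\ell\in\lambda\setminus k}y_\ell
= \frac{(2k)!}{k!}\,[t^k]\frac{1}{M_y(t)} - c_{k,k}\,y_k,\qquad M_y(t):=\sum_{j\ge0}\frac{y_j}{j!}t^j,\ y_0=1,
\]
valid for any sequence $(y_j)$. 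It follows from differentiating the cumulant polynomial $\kappa_{2k}$ of \eqref{kappa-mu}, viewed as $\kappa_{2k}=(2k)![t^{2k}]\log M_y(t)$, in the variable $y_k$: since $\partial_{y_k}\log M_y = t^k/(k!\,M_y)$ one gets $\partial_{y_k}\kappa_{2k}=\tfrac{(2k)!}{k!}[t^k]M_y^{-1}$, whereas the partition expansion gives $\partial_{y_k}\kappa_{2k}=\sum_{\lambda\ni k}c_\lambda\,m_k(\lambda)\prod_{\ell\in\lambda\setminus k}y_\ell$; because a part $>k$ together with a part $=k$ would exceed $2k$, the partitions of $2k$ containing $k$ are exactly those with $\ell_{\max}=k$, and the multiplicity $m_k(\lambda)$ exceeds $1$ only for the single partition $(k,k)$, accounting for the correction $c_{k,k}y_k$. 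Applying this with $y_\ell=\nu_\ell(b)$ and using $M_{\nu(b)}(t)=\E_\ta[e^{t(A-b)}]=e^{-bt}\Phi(t)$ turns the inner sum into $\tfrac{(2k)!}{k!}[t^k]\big(e^{bt}/\Phi(t)\big)-c_{k,k}\nu_k(b)$. Taking $\E_{\ta_*}$ replaces $e^{bt}$ by $\Phi_*(t)$, the two terms $\pm c_{k,k}\E_{\ta_*}[\nu_k(b)]$ cancel against the correction term, and I obtain
\[
F(x) = \frac{(2k)!}{k!}\,[t^k]\frac{\Phi_*(t)}{\Phi(t)} - c_{k,k}\,a_k.
\]

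Everything now follows by inspecting this formula. Writing $\Phi_*/\Phi=(1+\sum_{j\ge1}a_j^*t^j/j!)\,\Phi^{-1}$, the coefficient of $t^k$ contains $a_k^*$ only linearly (as $a_k^*/k!$) and $a_k$ only linearly (as $-a_k/k!$, from the linear part of $\Phi^{-1}$), all other contributions being polynomial in $a_{1:k-1},a_{1:k-1}^*$; multiplying by $(2k)!/k!$ and using $(2k)!/(k!)^2=\binom{2k}{k}=-2c_{k,k}$ (where $c_{k,k}=-\tfrac12\binom{2k}{k}$ is the coefficient of $\mu_k^2$ in $\kappa_{2k}$) yields $F = c_{k,k}a_k - 2c_{k,k}a_k^* + \tfrac{(2k)!}{k!}G(a_{1:k-1},a_{1:k-1}^*)$. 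This is exactly \eqref{lma:eqn:toFinish}, with $Q_k$ the ($x$–independent) $V_k$–tensor lifting $\tfrac{(2k)!}{k!}G$. Finally $Q_k(T^*,T^*)=0$ drops out of the same formula: setting $a_j=a_j^*$ for all $j$ makes $\Phi_*/\Phi\equiv 1$, so $[t^k](\Phi_*/\Phi)=0$ for $k\ge1$, which forces $G$ to vanish on $a_j=a_j^*$, $j\le k-1$ (as $G$ is free of $a_k,a_k^*$). I expect the main obstacle to be pinning down the restricted–partition generating–function identity and recognizing the factorization $M_{\nu(b)}(t)=e^{-bt}\Phi(t)$ that produces the ratio $\Phi_*/\Phi$; once these are in hand, the coefficient extraction and the vanishing of $Q_k(T^*,T^*)$ are immediate. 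The only remaining point to address is that the argument produces $Q_k$ through its contractions $\la x^{\otimes k},\cdot\ra$ for all $x$, but this is exactly what is needed, since $Q_k$ enters the expansion only via $\la T_k,Q_k\ra$ with $T_k$ symmetric.
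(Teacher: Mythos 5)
Your proposal is correct, and its overall architecture coincides with the paper's: contract against $x^{\otimes k}$ to reduce the tensor identity to a scalar moment identity (the paper does exactly this, taking $X=x^T\ta$, $X_*=x^T\ta_*$ in Lemma~\ref{app:equiv} and proving the scalar statement as Lemma~\ref{lma:toFinishForReal}); convert the restricted partition sum into a $t^k$-coefficient of an inverse moment generating function; use $\E_\ta\lb e^{t(A-b)}\rb=e^{-bt}\Phi(t)$ so that averaging over $\ta_*$ produces the ratio $\Phi_*/\Phi$; and finish by coefficient extraction together with $c_{k,k}=-\frac12\binom{2k}{k}$. Where you genuinely depart from the paper is in the derivation of the central combinatorial identity. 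The paper (steps \textbf{A} and \textbf{B} of Lemma~\ref{lma:toFinishForReal}) multiplies the restricted sum by $\omega_k$, writes it as the difference $\kappa_{2k}(\omega_1,\dots,\omega_k,0,\dots,0)-\kappa_{2k}(\omega_1,\dots,\omega_{k-1},0,\dots,0)$, evaluates the resulting cumulant of the auxiliary function $M(t)=1-\frac{\omega_kt^k}{k!}(1+q_\omega(t))^{-1}$ (only the partitions $(2k)$ and $(k,k)$ survive), and then must divide by $\omega_k$, invoking a continuity argument to cover $\omega_k=0$. Your route --- differentiating $\kappa_{2k}=(2k)!\,[t^{2k}]\log M_y(t)$ in the variable $y_k$ and noting that the partitions of $2k$ containing a part $k$ are exactly those with $\ell_{\max}=k$, with $(k,k)$ the only one in which $k$ repeats --- yields the same identity in a few lines, with no auxiliary function, no multiplication and division by $\omega_k$, and no degenerate case; working with the full formal series $\Phi,\Phi_*$ also spares you the paper's truncation bookkeeping in step \textbf{C}. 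One point you should tighten is the final claim $Q_k(T_{1:k-1}^*,T_{1:k-1}^*)=0$. You deduce that the scalar polynomial $G$ vanishes identically on the diagonal $a_j=a_j^*$, but a tensor lift of a polynomial is not unique (tensor factors do not commute while scalars do), so diagonal vanishing of $G$ does not by itself force an arbitrary lift to vanish under the substitution $T_j\mapsto T_j^*$: for instance $a_1a_2^*-a_2a_1^*$ lifts to $T_1\otimes T_2^*-T_2\otimes T_1^*$, which is nonzero at $T_j=T_j^*$. Two clean fixes: either take the symmetrized lift (legitimate, since $Q_k$ is only ever paired with the symmetric tensors $x^{\otimes k}$ or $T_k$, as you observe), or note that $G(a,a)\equiv 0$ means $G$ lies in the ideal generated by $(a_j^*-a_j)$, $j\le k-1$, and lift that factorized representation --- which is precisely the explicit form $\pi=\sum_{\ell}(\mu_\ell^*-\mu_\ell)\sum_{\lambda}d_\lambda\prod_{\ell'\in\lambda}\mu_{\ell'}$ that the paper's step \textbf{D} produces, making the vanishing manifest.
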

To finish the proof of Theorem~\ref{mainthm:revised}, note that substituting $x=\ta'$ and taking the $\ta'$-expectation gives
$$\E_{\ta'}\E_{\ta_*}\bigg\la \ta'^{\otimes k}, \; J_k + c_{k,k}\E_{\ta}\lb w^{\otimes k}-\ta^{\otimes k}\rb\bigg\ra = c_{k,k}\|T_k\|^2 -2c_{k,k}\la T_k,\, T_k^*\ra+ \la T_k, Q_k\ra.$$
Substituting into \eqref{toFinish} and dividing by $(2k)!$ gives
$$\frac{1}{(2k)!}\E_{\ta_*}\E_Z\lb\kappa_{2k}\rb = \frac{k!}{(2k)!}c_{k,k}\|T_k-T_k^*\|^2 + \la T_k, Q_k\ra + r_k,$$ where we have absorbed $c_{k,k}\|T_k^*\|^2$ into $r_k$.  In Lemma~\ref{cMM} in the appendix, we show that $$c_{k,k} = -\frac12{2k\choose k},\;\mathrm{so}\,\mathrm{that}\;\frac{k!}{(2k)!}c_{k,k} = -\frac{1}{2(k!)}.$$ Hence, the coefficient in front of the squared difference of $k$th moments is as in Theorem~\ref{mainthm:revised}.
Now, Lemma~\ref{lma:toFinish} will follow from the following Lemma:
\begin{lemma}\label{lma:toFinishForReal}
Let $X, X_*\in\R$ be bounded random variables, and let $W=X-X_*$. Let
$$\mu_j = \E\lb X^j\rb, \quad \mu_j^* = \E\lb X_*^j\rb, \quad \omega_j = \E_X\lb W^j\rb.$$ (Note that $\omega_j$ is random with respect to $X_*$.) Let $c_\lambda$ be the universal coefficients of the moment-cumulant relations ~\eqref{kappa-mu}. Then
\beq\label{XXstarW} \E_{X_*}\bigg[c_{k,k}\omega_k + \bigg(\sum_{\substack{\lambda\in S_{2k}\\\ell_{\max}=k}}c_{\lambda}\prod_{\ell\in\lambda\setminus k}\omega_\ell\bigg)\bigg] = 2c_{k,k}(\mu_k - \mu_k^*) + \pi(\mu_{1:k-1}, \mu_{1:k-1}^*),\eeq
where $\pi\in R[\mu_{1:k-1}, \mu_{1:k-1}^*]$ and satisfies $\pi(\mu_{1:k-1}^*, \mu_{1:k-1}^*) = 0$. 
\end{lemma}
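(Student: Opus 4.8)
The plan is to recognize the bracketed expression on the left of \eqref{XXstarW} as $\partial/\partial\omega_k$ of a \emph{truncated} cumulant polynomial, and then to evaluate that derivative through the generating-function form of the moment-cumulant relations (Proposition~\ref{cum-mom-prop}). This collapses the entire left-hand side, before the $X_*$-average, to a single Taylor coefficient of $e^{tX_*}/A(t)$, where $A(t)=\E_X\lb e^{tX}\rb$ and (writing $W=X-X_*$) $B(t)=\E_X\lb e^{tW}\rb=\sum_{m\geq0}\omega_m t^m/m!=e^{-tX_*}A(t)$. Taking $\E_{X_*}$ then replaces $e^{tX_*}$ by the moment generating function $A_*(t)=\E\lb e^{tX_*}\rb$, and all three assertions of the lemma are read off from the coefficient of $t^k$ in $A_*(t)/A(t)$.

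In detail, write $\kappa_n(\omega_{1:n})=\sum_{\lambda\in S_n}c_\lambda\prod_{\ell\in\lambda}\omega_\ell$ as in \eqref{kappa-mu}, and let $\kappa_{2k}^{\leq k}(\omega_{1:k})$ be the sub-sum over those $\lambda\in S_{2k}$ all of whose parts are at most $k$ (equivalently $\kappa_{2k}$ with $\omega_{k+1}=\dots=\omega_{2k}=0$). The only partition of $2k$ with a repeated largest part $k$ is $(k,k)$, so every $\lambda$ with $\ell_{\max}=k$ other than $(k,k)$ contains exactly one copy of $k$; differentiating in $\omega_k$ gives
\beq
c_{k,k}\omega_k + \sum_{\substack{\lambda\in S_{2k}\\ \ell_{\max}=k}}c_\lambda\prod_{\ell\in\lambda\setminus k}\omega_\ell \;=\; \frac{\partial}{\partial\omega_k}\,\kappa_{2k}^{\leq k}(\omega_{1:k}),
\eeq
since the single-$k$ partitions reproduce the sum and $(k,k)$ contributes $\partial_{\omega_k}(c_{k,k}\omega_k^2)=2c_{k,k}\omega_k$. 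By Proposition~\ref{cum-mom-prop}, $\kappa_{2k}^{\leq k}$ is $(2k)!$ times the coefficient of $t^{2k}$ in $\log\lp 1+\sum_{m=1}^k\omega_m t^m/m!\rp$; as $\partial_{\omega_k}$ of the argument is $t^k/k!$, and (writing $[t^k]F$ for the coefficient of $t^k$ in $F$) the reciprocal below is unchanged modulo $t^{k+1}$ by restoring the full series $B(t)$, we obtain
\beq
\frac{\partial}{\partial\omega_k}\,\kappa_{2k}^{\leq k} \;=\; \frac{(2k)!}{k!}\,[t^k]\,\frac{1}{1+\sum_{m=1}^k\omega_m t^m/m!} \;=\; \frac{(2k)!}{k!}\,[t^k]\,\frac{1}{B(t)} \;=\; \frac{(2k)!}{k!}\,[t^k]\,\frac{e^{tX_*}}{A(t)}.
\eeq
Taking $\E_{X_*}$, which acts on the degree-$k$ polynomial in $X_*$ by sending $e^{tX_*}$ to $A_*(t)$, yields
\beq\label{eq:toFinishForReal-gf}
\E_{X_*}\Big[\,c_{k,k}\omega_k + \sum_{\substack{\lambda\in S_{2k}\\ \ell_{\max}=k}}c_\lambda\prod_{\ell\in\lambda\setminus k}\omega_\ell\,\Big] \;=\; \frac{(2k)!}{k!}\,[t^k]\,\frac{A_*(t)}{A(t)}.
\eeq

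The three conclusions now follow from \eqref{eq:toFinishForReal-gf}. The coefficient $[t^k]\,A_*/A$ is a polynomial in $\mu_{1:k},\mu_{1:k}^*$ (with $\mu_0=\mu_0^*=1$) in which the order-$k$ moments appear only linearly: $\mu_k^*$ enters through $[t^k]A_*=\mu_k^*/k!$ and $\mu_k$ through $[t^k](1/A)=-\mu_k/k!+\dots$, so the top-order part of \eqref{eq:toFinishForReal-gf} is $\binom{2k}{k}(\mu_k^*-\mu_k)=2c_{k,k}(\mu_k-\mu_k^*)$ using $c_{k,k}=-\tfrac12\binom{2k}{k}$ (Lemma~\ref{cMM}). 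All remaining terms involve only $\mu_{1:k-1},\mu_{1:k-1}^*$ and constitute $\pi$. Finally, setting $\mu_j=\mu_j^*$ for all $j$ makes $A=A_*$, so $A_*/A\equiv1$ and $[t^k](A_*/A)=0$ for $k\geq1$; since the top-order part also vanishes on this diagonal, we conclude $\pi(\mu_{1:k-1}^*,\mu_{1:k-1}^*)=0$ (indeed the entire left-hand side vanishes when $X\stackrel{d}{=}X_*$, a consistency check with $\rho=\rho_*$ maximizing the likelihood).

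The main obstacle is purely the first step: seeing that the somewhat ad hoc combination on the left of \eqref{XXstarW} is exactly $\partial_{\omega_k}\kappa_{2k}^{\leq k}$, and then carrying out the bookkeeping that makes the reciprocal power series collapse to $A_*/A$. The points requiring care are (i) that $(k,k)$ is the unique partition of $2k$ with a repeated part $k$, which is what produces the factor $2c_{k,k}$ rather than $c_{k,k}$; (ii) that truncating the generating function at order $k$ leaves $[t^k]$ of the reciprocal unchanged, so that $B(t)=e^{-tX_*}A(t)$ may be used in full; and (iii) that boundedness of $X,X_*$ makes $A,A_*$ analytic near $0$ with $A(0)=1\neq0$, so that Proposition~\ref{cum-mom-prop} applies and the termwise interchange of $\E_{X_*}$ with coefficient extraction is justified.
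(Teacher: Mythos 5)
Your proposal is correct, and it reaches the paper's conclusion by a genuinely different opening move. The paper's proof (steps \textbf{A}--\textbf{B}) multiplies the bracketed sum by $\omega_k$, writes the product as a difference of two truncated cumulants $\kappa_{2k}(\omega_{1:k},0,\dots,0)-\kappa_{2k}(\omega_{1:k-1},0,\dots,0)$, identifies this with $-(2k)!\frac{d^{2k}}{dt^{2k}}\log M(t)\big\vert_{t=0}$ for the auxiliary function $M(t)=1-\frac{\omega_k t^k}{k!}(1+q_\omega(t))^{-1}$, computes $M^{(k)}(0)$ and $M^{(2k)}(0)$ (using both $c_{2k}=1$ and $c_{k,k}=-\frac12{2k\choose k}$ from Lemma~\ref{cMM}), and finally divides by $\omega_k$, which forces a separate continuity argument to cover the case $\omega_k=0$. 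You instead observe that the bracket is exactly $\partial_{\omega_k}\kappa_{2k}(\omega_{1:k},0,\dots,0)$ — the factor $2c_{k,k}\omega_k$ coming precisely from the unique repeated-part partition $(k,k)$ — and differentiate the generating-function identity of Proposition~\ref{cum-mom-prop} directly, landing at once on $\frac{(2k)!}{k!}[t^k](1+q_\omega(t))^{-1}$ with no auxiliary $M$, no use of $c_{2k}=1$, and no division or edge case at $\omega_k=0$. From that point on the two arguments coincide: your truncation-to-full-series step, the identity $B(t)=e^{-tX_*}A(t)$, the passage of $\E_{X_*}$ to get $A_*(t)/A(t)$, and the coefficient extraction are exactly the paper's steps \textbf{C}--\textbf{D}. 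What your route buys is brevity and transparency in the combinatorial step; what the paper's route buys is an explicit factored formula $\pi=\sum_{\ell=1}^{k-1}(\mu_\ell^*-\mu_\ell)\sum_{\lambda\in S_{k-\ell}}d_\lambda\prod_{\ell'\in\lambda}\mu_{\ell'}$, which makes the diagonal vanishing and the total-moment-order-$k$ homogeneity immediate and is reused verbatim in Lemma~\ref{app:equiv} to rewrite $\pi$ as $\la x^{\otimes k},Q_k\ra$ and construct $Q_k\in V_k$; if you wanted the same downstream payoff you would expand $[t^k]\big((A_*-A)A^{-1}\big)$ as a convolution, which recovers that factored form with one extra line. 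Two small points of care, both handled implicitly by the paper as well: the interchange of $\E_{X_*}$ with coefficient extraction uses that $X\indep X_*$ (true in the application, where $X=x^T\ta$, $X_*=x^T\ta_*$), and the vanishing $\pi(\mu_{1:k-1}^*,\mu_{1:k-1}^*)=0$ should be read as the formal statement that substituting $\mu_j=\mu_j^*$ into the polynomial $[t^k](A_*/A)$ gives $[t^k]1=0$, which your argument supports.
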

We show how Lemma~\ref{lma:toFinish} follows from Lemma~\ref{lma:toFinishForReal} in the appendix; see Lemma~\ref{app:equiv}. The idea is to take $X = x^T\ta$ and $X_* = x^T\ta_*$.
\begin{proof}[Proof of Lemma~\ref{lma:toFinishForReal}]
Define the polynomials
 $$q(t) =  \sum_{j=1}^k\mu_jt^j/j!,\quad q_*(t) =  \sum_{j=1}^k\mu^*_jt^j/j!, \quad q_\omega(t) =  \sum_{j=1}^k\omega_jt^j/j!.$$ The proof consists of the following sequence of computations. 
%
\begin{enumerate}[wide=0pt,label=\textbf{\Alph*.}]
\item We show that
$$\omega_k\bigg (c_{k,k}\omega_k + \sum_{\substack{\lambda\in S_{2k}\\\ell_{\max}=k}}c_{\lambda}\prod_{\ell\in\lambda\setminus p}\omega_\ell\bigg) =c_{k,k}\omega_k^2  -\kappa_{2k}(M^{(1)}(0), \dots, M^{(2k)}(0)),$$ where
$$M(t) = 1-\frac{\omega_kt^k}{k!}(1+q_\omega(t))^{-1}.$$
\item We show that
$$c_{k,k}\omega_k^2 - \kappa_{2k}(M^{(1)}(0), \dots, M^{(2k)}(0))= -2c_{k,k}\omega_k\frac{d^k}{dt^k}(1+q_\omega(t))^{-1}\big\vert_{t=0},$$ and hence
\beq\label{ckomega} c_{k,k}\omega_k + \sum_{\substack{\lambda\in S_{2k}\\\ell_{\max}=k}}c_{\lambda}\prod_{\ell\in\lambda\setminus p}\omega_\ell = -2c_{k,k}\frac{d^k}{dt^k}(1+q_\omega(t))^{-1}\big\vert_{t=0}.\eeq
This is true even if $\omega_k=0$. Indeed, \eqref{ckomega} is true if $\omega_k\neq 0$, and both sides of the equation are continuous with respect to $\omega_k$ (the right hand side is also a polynomial in $\omega_k$), so we can set $\omega_k = \epsilon$ and take $\epsilon\to0$. Taking the $X_*$ expectation, we get
\beqs\label{final}\E_{X_*}\bigg[c_{k,k}\omega_k +& \sum_{\substack{\lambda\in S_{2k}\\\ell_{\max}=k}}c_{\lambda}\prod_{\ell\in\lambda\setminus p}\omega_\ell  \bigg] \\&= -2c_{k,k}\E_{X_*}\bigg[\frac{d^k}{dt^k}(1+q_\omega(t))^{-1}\big\vert_{t=0}\bigg]\eeqs
\item We show that 
$$\E_{X_*}\bigg[\frac{d^k}{dt^k}(1+q_\omega(t))^{-1}\big\vert_{t=0}\bigg] = \frac{d^k}{dt^k}\frac{1 + q^*(t)}{1 + q(t)}\bigg\vert_{t=0}.$$
\item We show that
$$\frac{d^k}{dt^k}\frac{1 + q^*(t)}{1 + q(t)}\bigg\vert_{t=0} = \mu_k^* - \mu_k + \pi(\mu_{1:k-1}, \mu_{1:k-1}^*),$$ where $\pi$ has the desired properties.  
\end{enumerate}

We prove {\bf A.} here and delegate the rest to the appendix; see Lemma~\ref{app:end-of-proof}. Consider the moment-cumulant relation $\kappa_{2k}(\omega_1, \dots, \omega_{2k}) = \sum_{\lambda\in S_{2k}}c_\lambda\prod_{\ell\in\lambda}\omega_\ell$. We write it as follows:
\beqs\label{kappatilde}
\kappa_{2k}(\omega_1, \dots, \omega_{2k})= \sum_{\substack{\lambda\in S_{2k}\\\lambda_{\mathrm{max}}> k}}c_{\lambda}\prod_{\ell\in\lambda}\omega_{\ell} +\omega_k\sum_{\substack{\lambda\in S_{2k}\\\lambda_{\mathrm{max}}=k}}c_{\lambda}\prod_{\ell\in\lambda\setminus k}\omega_{\ell} +\sum_{\substack{\lambda\in S_{2k}\\\lambda_{\mathrm{max}}<k}}c_{\lambda}\prod_{\ell\in\lambda}\omega_{\ell}
\eeqs
The sum in the middle contains the expression we are interested in. We will express it in terms of the function $\kappa_{2k}$ by strategically replacing some of the arguments with zero. Indeed, if we replace $\omega_\ell$ with zero for all $\ell > k$, then the first sum will vanish. If we also replace $\omega_k$ with zero, we get the third sum. The second sum is then the difference of these two. Summarizing, we have
\beqs \label{kappa-eqn}\omega_k\sum_{\substack{\lambda\in S_{2k}\\\lambda_{\mathrm{max}}=k}}c_{\lambda}\prod_{\ell\in\lambda\setminus k}\omega_{\ell} = \kappa_{2k}(\omega_1, \dots, \omega_k, 0, \dots, 0) - \kappa_{2k}(\omega_1, \dots, \omega_{k-1},0, 0, \dots, 0).\eeqs
Now, recall that $q_\omega(t) = \sum_{j=1}^k\omega_jt^j/j!$. Then by definition, 
\beqs\label{kappa-eqn-II}\kappa_{2k}(\omega_1, \dots, \omega_k) &= (2k)!\frac{d^{2k}}{dt^{2k}}\log\,\lp 1 +  q_\omega(t)\rp\big\vert_{t=0}\\
\kappa_{2k}(\omega_1, \dots, \omega_{k-1}) &= (2k)!\frac{d^{2k}}{dt^{2k}}\log\lp 1 + q_\omega(t)-\frac{\omega_kt^k}{k!}\rp\big\vert_{t=0},
\eeqs where we have omitted the zero arguments for brevity. Let $M(t) = 1-\frac{\omega_kt^k}{k!}(1+q_\omega(t))^{-1}$. Substituting \eqref{kappa-eqn-II} into \eqref{kappa-eqn}, we have
\beqs \omega_k\sum_{\substack{\lambda\in S_{2k}\\\lambda_{\mathrm{max}}=k}}c_{\lambda}\prod_{\ell\in\lambda\setminus k}\omega_{\ell} &= (2k)!\frac{d^{2k}}{dt^{2k}}\lb\log\,\lp 1+ q_\omega(t)\rp - \log\lp 1+q_\omega(t)-\frac{\omega_kt^k}{k!}\rp\rb\big\vert_{t=0}\\
&=-(2k)!\frac{d^{2k}}{dt^{2k}}\log\, M(t)\big\vert_{t=0}.
\eeqs By Proposition~\ref{cum-mom-prop} and since $M(0)=1$, this is given by  $\kappa_{2k}(M^{(1)}(0), \dots, M^{(2k)}(0))$.

\end{proof}


\bibliographystyle{alpha}
\bibliography{biblio}


\appendix

\section{Generalized Moment-Cumulant Relationship}\label{app:cum-mom}
Recall the definition of the polynomials $\kappa_m(\mu_1,\dots, \mu_m)$:
\beq\label{app:kappa-mu}
\kappa_m(\mu_1,\dots, \mu_m) = \sum_{\lambda\in S_m}c_{\lambda}\prod_{k\in\lambda}\mu_k,
\eeq
where $S_m$ is the set of all finite lists $\lambda$ of positive integers whose sum is $m$, and $c_{\lambda}\in\R$ are univeral constants.
\begin{prop}\label{app:prop:logM}
Let $M(t)$ be a real analytic function in the neighborhood $|t-t_0| < R$, for which $M(t_0)\neq 0$. If $$\sup_{|t-t_0|<R}\left|\frac{M(t)}{M(0)} - 1\right| < 1,$$ then 
$$\frac{d^m}{dt^m}\log M(t)\big\vert_{t=t_0} = \kappa_m\lp \frac{M^{(1)}(t_0)}{M(t_0)},\frac{M^{(2)}(t_0)}{M(t_0)}\dots, \frac{M^{(m)}(t_0)}{M(t_0)}\rp,$$ where the function $\kappa_m$ is defined by \eqref{app:kappa-mu}.

In particular, if $u(t)$ is given by the convergent series expansion 
$$u(t) = \sum_{k=1}^{\infty}\frac{\mu_k}{k!}(t-t_0)^k,\quad |t-t_0| < R,$$ and $$\sup_{|t-t_0|<R}|u(t)|< 1,$$ then
$$\log\lp 1+u(t) \rp = \sum_{m=1}^{\infty}\frac{\kappa_m}{m!}(t-t_0)^m, \quad |t-t_0| < R,$$ where the $\kappa_m$ are given by \eqref{app:kappa-mu}.
\end{prop}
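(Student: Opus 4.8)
The plan is to reduce everything to a single formal power-series identity for $\log(1+u)$ and then use analyticity to pass from formal coefficients to genuine Taylor coefficients, identifying the universal constants $c_\lambda$ along the way. First I would normalize. Since $\log(cM) = \log c + \log M$ for a nonzero constant $c$, every derivative of order $m\geq 1$ of $\log M$ is unchanged when $M$ is replaced by $M/M(t_0)$, and likewise each ratio $M^{(k)}(t_0)/M(t_0)$ is invariant under this rescaling. Translating so that $t_0 = 0$ and rescaling so that $M(0)=1$, it suffices to prove that for analytic $M$ with $M(0)=1$ and $\sup_{|t|<R}|M(t)-1|<1$, the $m$-th Taylor coefficient of $\log M$ at $0$ equals $\kappa_m(M^{(1)}(0),\dots,M^{(m)}(0))/m!$.

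Next I would set $u(t) = M(t)-1 = \sum_{k\geq 1}\mu_k t^k/k!$ with $\mu_k = M^{(k)}(0)$. The hypothesis gives $|u(t)|<1$ on the disc, so $1+u$ stays within distance $<1$ of $1$; the principal logarithm is analytic there and $\log(1+u) = \sum_{n\geq 1}(-1)^{n+1}u(t)^n/n$ converges uniformly on compact subdisks. Substituting the series for $u$, expanding, and collecting the coefficient of $t^m$ produces some universal polynomial $P_m(\mu_1,\dots,\mu_m)$ in the first $m$ coefficients; uniform convergence justifies differentiating term by term, so that $\frac{d^m}{dt^m}\log M\big|_{t=0} = m!\,P_m(\mu_1,\dots,\mu_m)$.

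The only substantive step is to identify $P_m$ with the cumulant polynomial $\kappa_m$ of \eqref{app:kappa-mu}. Here I would exploit that \eqref{app:kappa-mu} is, by definition, the polynomial expressing the cumulants of a random variable $X$ in terms of its moments $\mu_k = \E[X^k]$; equivalently, for every compactly supported $X$ one has the convergent identity $\log\E[e^{tX}] = \sum_m \kappa_m(\mu_1,\dots,\mu_m)\,t^m/m!$ for small $|t|$. Applying the computation of the previous paragraph to $M(t)=\E[e^{tX}]$ (which is analytic near $0$, has $M(0)=1$, and satisfies $\sup_{|t|<R}|M-1|<1$ for $R$ small) shows this same expansion also equals $\sum_m P_m(\mu_1,\dots,\mu_m)\,t^m/m!$. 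Hence $P_m$ and $\kappa_m$ agree on every moment vector $(\mu_1,\dots,\mu_m)$ realizable by a compactly supported distribution. That set is the truncated moment body, which has nonempty interior in $\R^m$, and two polynomials agreeing on a set with nonempty interior coincide; therefore $P_m = \kappa_m$ as polynomials, and so the identity persists for the arbitrary (not necessarily moment-valued) arguments $M^{(k)}(0)$. Undoing the normalization replaces $M^{(k)}(0)$ by $M^{(k)}(t_0)/M(t_0)$ and yields the stated formula, while the \emph{in particular} clause is simply the special case $M = 1+u$, whose hypotheses are exactly $\sup|u|<1$.

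The main obstacle is precisely this identification: one must ensure the $c_\lambda$ in \eqref{app:kappa-mu} are genuinely universal constants, independent of whether the $\mu_k$ are interpreted as moments. This is why I would route the argument through the moment body rather than attempting to read off the constants directly — the fact that realizable moment vectors fill out a full-dimensional region is exactly what upgrades the probabilistic moment-cumulant identity into the formal polynomial identity needed for an arbitrary analytic $M$.
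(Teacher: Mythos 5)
Your proposal is correct, and its computational engine is the same as the paper's: normalize so that $t_0=0$ and $M(t_0)=1$, set $u=M-1$, expand $\log(1+u)=\sum_{j\geq 1}(-1)^{j-1}u^j/j$ under the hypothesis $\sup|u|<1$, and collect powers of $t-t_0$ using absolute convergence. Where you genuinely differ is in the final identification of the collected coefficient $P_m$ with the cumulant polynomial $\kappa_m$ of \eqref{app:kappa-mu}. The paper treats this as immediate: the rearrangement visibly produces a polynomial of the form $\sum_{\lambda\in S_m}c_\lambda\prod_{k\in\lambda}\mu_k$ with coefficients not depending on the $\mu_k$, and since the probabilistic moment--cumulant relation is obtained by applying this very rearrangement to $M_X(t)=\E[e^{tX}]$, the constants are the $c_\lambda$ by construction. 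You instead evaluate both polynomials on moment vectors of compactly supported random variables, where they agree by the definition of cumulants, and then upgrade this to a polynomial identity using the fact that truncated moment vectors fill out a set with nonempty interior in $\R^m$. This buys robustness — your argument never inspects how the $c_\lambda$ were extracted, only that they satisfy the probabilistic identity — at the cost of one extra fact, which you assert without proof. That fact is true and easy to supply if pressed: for fixed positive weights $w_i$ summing to $1$, the map $(x_1,\dots,x_m)\mapsto\bigl(\sum_i w_ix_i,\dots,\sum_i w_ix_i^m\bigr)$ has Jacobian $(jw_ix_i^{j-1})_{j,i}$, which is an invertible scaling of a Vandermonde matrix whenever the $x_i$ are distinct, so the image of the moment map contains an open set. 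With that one line added, your proof is complete and fully rigorous; indeed it is arguably tighter than the paper's on the universality point, which the paper dispatches with ``clearly.''
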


\begin{proof}
We have
$$\frac{M(t)}{M(t_0)} = 1+\sum_{k=1}^{\infty}\frac{M^{(k)}(t_0)}{M(t_0)}\frac{(t-t_0)^k}{k!},\quad |t-t_0| < R.$$
Define $\mu_k = M^{(k)}(t_0)/M(t_0)$, and $u(t) = M(t)/M(t_0) - 1$, so that
$$u(t) = \sum_{k=1}^{\infty}\frac{\mu_k}{k!}(t-t_0)^k,\quad |t-t_0| < R.$$
If $\sup_{|t-t_0| < R}|u(t)| < 1$, then $f(t) = \log(M(t)/M(t_0)) = \log(1+u(t))$ is also real analytic in this neighborhood, and therefore has a convergent Taylor series expansion 
$$f(t) = \sum_{m=1}^{\infty}f^{(m)}(t_0)\frac{(t-t_0)^m}{m!},\quad |t-t_0| < R.$$ Note that $$f^{(m)}(t_0) = \frac{d^m}{dt^m}\log M(t)\big\vert_{t=t_0}.$$
On the other hand, since the series defining $u(t)$ is absolutely convergent and since $|u(t)|$ stays below $1$, we also have
\beqs
f(t) = \log(1+u(t)) &= \sum_{j=1}^{\infty}\frac{(-1)^{j-1}}{j}u(t)^j\\
&=  \sum_{j=1}^{\infty}\frac{(-1)^{j-1}}{j}\left(\sum_{k=1}^{\infty}\frac{\mu_k}{k!}(t-t_0)^k\right)^j\\
&= \sum_{m=1}^{\infty}\frac{\kappa_m}{m!}(t-t_0)^m,
\eeqs
where in the last line, the $\kappa_m$ are obtained by expanding the powers of the series and rearranging terms to combine like powers of $t-t_0$. Note that the lowest power of $t-t_0$ in $u(t)^k$ is $k$; therefore, the term $(t-t_0)^m$ arises only in the series expansions of $u(t)^1, \dots, u(t)^m$. Moreover, the coefficient of $(t-t_0)^m$ in the expansions of $u(t)^k,\,k=1,\dots,m$ will depend only on $\mu_1, \dots, \mu_m$. Therefore, $\kappa_m$ is a polynomial of $\mu_1,\dots,\mu_m$, and it is clear that this polynomial should be of the form\eqref{app:kappa-mu}. The coefficients $c_{\lambda}$ clearly do not depend on the particular values of the $\mu_k$, so they are universal constants. 

Summarizing, we have shown that
$$\frac{d^m}{dt^m}\log M(t)\big\vert_{t=t_0} = \kappa_m(\mu_1,\dots, \mu_m) = \kappa_m\lp \frac{M^{(1)}(t_0)}{M(t_0)},\frac{M^{(2)}(t_0)}{M(t_0)}\dots, \frac{M^{(m)}(t_0)}{M(t_0)}\rp.$$
The second assertion about the Taylor expansion of $\log(1+u(t))$ clearly follows. 
\end{proof}

\begin{lemma}\label{cMM}
The coefficient $c_{2p}$ in front of the term $\mu_{2p}$ in the polynomial defining $\kappa_{2p}$ (see \eqref{app:kappa-mu}) is $c_{2p}=1$. The coefficient $c_{p,p}$ in front of the term $\mu_p^2$ is given by $c_{p,p} = -\frac12{2p\choose p}.$
\end{lemma}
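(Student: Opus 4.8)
The plan is to read off both coefficients directly from the power-series definition of $\log M(t)$, exploiting the fact that each $\mu_k$ carries ``moment-degree one'', so that a monomial of a given total degree in the moments can only come from a single power of $u$. First I would set $u(t) = M(t)-1 = \sum_{k\geq 1}\frac{\mu_k}{k!}t^k$ and use the expansion $\log(1+u) = \sum_{j\geq 1}\frac{(-1)^{j-1}}{j}u^j$, valid under the hypotheses of Proposition~\ref{app:prop:logM}. Comparing with $\log M(t) = \sum_{m\geq 1}\frac{\kappa_m}{m!}t^m$ shows that the coefficient of $t^{2p}$ in $\log M$ equals $\kappa_{2p}/(2p)!$, and by the defining polynomial \eqref{app:kappa-mu} the $\mu_{2p}$-part of this coefficient is $c_{2p}\mu_{2p}/(2p)!$ while the $\mu_p^2$-part is $c_{p,p}\mu_p^2/(2p)!$.

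The key observation is to grade everything by total degree in the moments: since $u$ is homogeneous of moment-degree one, $u^j$ is homogeneous of moment-degree $j$. Hence the degree-one monomial $\mu_{2p}$ can only arise from the $j=1$ term $u$, and the degree-two monomial $\mu_p^2$ can only arise from the $j=2$ term $-\tfrac12 u^2$. This reduces the whole computation to inspecting two explicit series.

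For $c_{2p}$, I would note that the coefficient of $t^{2p}$ in $u$ is $\mu_{2p}/(2p)!$, and $\mu_{2p}$ appears nowhere else at moment-degree one; matching $c_{2p}\mu_{2p}/(2p)! = \mu_{2p}/(2p)!$ gives $c_{2p}=1$. For $c_{p,p}$, I would extract the $\mu_p^2 t^{2p}$ coefficient of $-\tfrac12 u^2$: in the product $u\cdot u$ the only way to produce the monomial $\mu_p^2$ is to select the term $\frac{\mu_p}{p!}t^p$ from each of the two factors, contributing $\frac{1}{(p!)^2}$ (and automatically the correct power $t^{2p}$, since the two subscripts must sum to the $t$-exponent). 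Thus $-\tfrac12 u^2$ contributes $-\frac{1}{2(p!)^2}\mu_p^2 t^{2p}$, and matching with $c_{p,p}\mu_p^2/(2p)!$ yields $c_{p,p} = -\frac{(2p)!}{2(p!)^2} = -\tfrac12\binom{2p}{p}$.

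The only point requiring care --- and the sole ``obstacle'', though a mild one --- is justifying that no higher power $u^j$, $j\geq 3$, contributes to either monomial. This is immediate from the moment-degree grading being respected term by term, together with the fact that within $u^2$ the subscripts of the two selected factors must sum to $2p$, which forces both to equal $p$ for the monomial $\mu_p^2$. With these exclusions in hand, the two coefficient identities follow by the single-series inspection above.
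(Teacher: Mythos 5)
Your proposal is correct and follows essentially the same route as the paper's own proof: both extract the coefficients from the expansion $\log(1+u)=\sum_{j\geq 1}\frac{(-1)^{j-1}}{j}u^j$ with $u=\sum_k \mu_k t^k/k!$, observe that $\mu_{2p}$ (moment-degree one) can only arise from $u^1$ and $\mu_p^2$ (moment-degree two) only from $-\tfrac12 u^2$, and then match the $t^{2p}$ coefficients against $\kappa_{2p}/(2p)!$ to get $c_{2p}=1$ and $c_{p,p}=-\tfrac12\binom{2p}{p}$. Your explicit moment-degree grading argument is simply a more careful phrasing of the paper's remark that the moments appear individually in $u$, so products of two moments can only come from $u^2$.
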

\begin{proof}
Let $\epsilon = \sum_{k=1}^{\infty}\mu_kt^k/k!$, so that $$\log(1+\epsilon) = \sum_{j=1}^{\infty}\frac{(-1)^{j+1}}{j!}\epsilon^j.$$ Note that the moments appear individually in $\epsilon$. Thus, the moment $\mu_{2p}$ by itself can only appear in $\epsilon^1$, where it has coefficient $1/(2p)!$. Since $\kappa_{2p}$ is $(2p)!$ times the $t^{2p}$ coefficient, we must have $c_{2p}=1$. 

Now, similarly, $\mu_p^2$ is a product of two moments and can therefore only appear in $\epsilon^2$. In the expansion of $\epsilon^2$, $\mu_p^2t^{2p}$ will appear with coefficient $1/(p!)^2.$ Multiplying by $-1/2$, we have that the coefficient of $\mu_p^2t^{2p}$ in the expansion of  $\log(1+\epsilon)$ is $-1/2(p!)^2$. Multiplying by $(2p)!$, we arrive at 
$$c_{p,p} = -\frac12\frac{(2p)!}{p!p!} = -\frac12{2p\choose p}.$$
\end{proof}

\begin{lemma}\label{app:end-of-proof}
Let $X, X_*\in\R$ be bounded random variables, and let $W=X-X_*$. We denote the moments of $X,X_*,W$ as
$$\mu_j = \E\lb X^j\rb, \quad \mu_j^* = \E\lb X_*^j\rb, \quad \omega_j = \E_X\lb W^j\rb.$$ Note that $\omega_j$ is random with respect to $X_*$. Let $c_\lambda$ be the universal coefficients of the moment-cumulant relationships ~\eqref{app:kappa-mu}. Then
\beq\label{app:XXstarW} \E_{X_*}\bigg[\bigg(\sum_{\substack{\lambda\in S_{2k}\\\ell_{\max}=k}}c_{\lambda}\prod_{\ell\in\lambda\setminus k}\omega_\ell\bigg) + c_{k,k}\omega_k\bigg] = 2c_{k,k}(\mu_k - \mu_k^*) + \pi(\mu_{1:k-1}, \mu_{1:k-1}^*),\eeq
where $\pi$ is a polynomial in $\mu_{1:k-1}, \mu_{1:k-1}^*$ with universal coefficients (independent of the values of the moments) such that each monomial has total order $k$ and which satisfies $\pi(\mu_{1:k-1}^*, \mu_{1:k-1}^*)=0$. 
\end{lemma}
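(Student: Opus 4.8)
The plan is to prove the identity by carrying out steps \textbf{B}, \textbf{C}, \textbf{D} outlined after the statement of Lemma~\ref{lma:toFinishForReal}, taking as given the algebraic identity of step \textbf{A} established there, namely
\[
\omega_k\Big(c_{k,k}\omega_k + \sum_{\substack{\lambda\in S_{2k}\\\ell_{\max}=k}}c_\lambda\prod_{\ell\in\lambda\setminus k}\omega_\ell\Big) = c_{k,k}\omega_k^2 - \kappa_{2k}\big(M^{(1)}(0),\dots,M^{(2k)}(0)\big),
\]
with $M(t) = 1 - \tfrac{\omega_k t^k}{k!}(1+q_\omega(t))^{-1}$ and $q_\omega(t) = \sum_{j=1}^k \omega_j t^j/j!$. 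Everything reduces to computing a single $k$-th Taylor coefficient of a ratio of truncated moment generating functions.

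\textbf{Step B (extracting the leading term).} Since $M(0)=1$, Proposition~\ref{cum-mom-prop} gives $\kappa_{2k}(M^{(1)}(0),\dots,M^{(2k)}(0)) = (2k)!\,[t^{2k}]\log M(t)$. Writing $M = 1+u$ with $u = -\tfrac{\omega_k t^k}{k!}(1+q_\omega)^{-1} = O(t^k)$ and $\log(1+u) = u - \tfrac12 u^2 + \dots$, I note $u^j = O(t^{jk})$, so for $k\ge 1$ only $u$ and $u^2$ reach order $t^{2k}$. This yields $[t^{2k}]\log M = -\tfrac{\omega_k}{(k!)^2}\tfrac{d^k}{dt^k}(1+q_\omega)^{-1}\big|_0 - \tfrac{\omega_k^2}{2(k!)^2}$, and invoking Lemma~\ref{cMM} to write $(2k)!/(k!)^2 = -2c_{k,k}$ gives $c_{k,k}\omega_k^2 - \kappa_{2k} = -2c_{k,k}\,\omega_k\tfrac{d^k}{dt^k}(1+q_\omega)^{-1}\big|_0$. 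Dividing by $\omega_k$ (extending to $\omega_k=0$ by continuity, as both sides are polynomial in $\omega_k$) and then taking $\E_{X_*}$ reduces the left side of the claim to $-2c_{k,k}\,\E_{X_*}\big[\tfrac{d^k}{dt^k}(1+q_\omega)^{-1}\big|_0\big]$.

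\textbf{Step C (the probabilistic step, which I expect to be the crux).} The key observation is that $1+q_\omega(t)$ is the order-$k$ truncation of $\E_X[e^{tW}] = e^{-tX_*}M_X(t)$, the moment generating function of $W=\ta-\ta_*$ with $X_*$ held fixed, where $M_X(t)=\E_X[e^{tX}]$. Because the $t^k$ coefficient of the reciprocal of a power series depends only on its first $k$ coefficients, $(1+q_\omega(t))^{-1}$ agrees with $e^{tX_*}M_X(t)^{-1}$ through order $t^k$. Differentiating $k$ times at $0$ and then taking $\E_{X_*}$—legitimate since $X,X_*$ are bounded, so the coefficient is a finite polynomial in the moments—replaces $e^{tX_*}$ by $M_{X_*}(t)=\E_{X_*}[e^{tX_*}]$ and produces $\tfrac{d^k}{dt^k}\big[M_{X_*}(t)M_X(t)^{-1}\big]\big|_0$. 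As $M_{X_*}$ and $M_X$ agree with $1+q^*(t)$ and $1+q(t)$ through order $t^k$, this equals $\tfrac{d^k}{dt^k}\tfrac{1+q^*(t)}{1+q(t)}\big|_0$. The main obstacle is the bookkeeping: keeping precise track of which order-$k$ truncations are preserved under inversion and justifying the interchange of $\E_{X_*}$ with the $t$-derivative.

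\textbf{Step D (reading off $\mu_k^* - \mu_k$ and the properties of $\pi$).} With $A=1+q^*$ and $B=1+q$, the term of $k![t^k](AB^{-1})$ linear in $\mu_k^*$ comes only from $[t^k]A\cdot[t^0]B^{-1}=\mu_k^*/k!$, while the term linear in $\mu_k$ comes only from $[t^0]A\cdot[t^k](-q)=-\mu_k/k!$ (higher powers of $q$ produce only products of $\mu_j$ with $j\le k-1$), so $k![t^k](AB^{-1}) = \mu_k^*-\mu_k+\pi(\mu_{1:k-1},\mu_{1:k-1}^*)$. The two remaining properties follow from structural symmetries: assigning weight $j$ to each of $\mu_j,\mu_j^*$ (equivalently rescaling $X,X_*$ by $s$, so $\mu_j,\mu_j^*\mapsto s^j\mu_j,s^j\mu_j^*$, which sends $\tfrac{1+q^*}{1+q}\mapsto$ the same ratio in $st$) shows $k![t^k](AB^{-1})$ is isobaric of weight $k$, forcing every monomial of $\pi$ to have total moment order $k$; and setting $\mu_j=\mu_j^*$ for $j\le k-1$ gives $q^*-q=(\mu_k^*-\mu_k)t^k/k!$, whence $AB^{-1}=1+\tfrac{(\mu_k^*-\mu_k)t^k/k!}{1+q}$ and $k![t^k](AB^{-1})=\mu_k^*-\mu_k$ exactly, so $\pi(\mu_{1:k-1}^*,\mu_{1:k-1}^*)=0$. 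Chaining steps \textbf{B}--\textbf{D} gives the left side equal to $-2c_{k,k}(\mu_k^*-\mu_k) - 2c_{k,k}\pi = 2c_{k,k}(\mu_k-\mu_k^*) + \big(-2c_{k,k}\pi\big)$; since the overall factor $-2c_{k,k}$ preserves both the weight-$k$ homogeneity and the vanishing on the diagonal, relabeling $-2c_{k,k}\pi$ as the new $\pi$ yields exactly the stated identity.
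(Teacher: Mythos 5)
Your proposal is correct and follows essentially the same route as the paper's proof: it reduces the claim via steps \textbf{B}--\textbf{D} to the $k$-th Taylor coefficient of $(1+q_\omega)^{-1}$, uses the identity $\E_X[e^{tW}]=e^{-tX_*}M_X(t)$ so that the $X_*$-expectation produces $M_{X_*}/M_X$ and hence $(1+q^*)/(1+q)$, and then reads off $\mu_k^*-\mu_k$ plus a remainder $\pi$. The only differences are cosmetic: in \textbf{B} you extract the $t^{2k}$ coefficient of $\log(1+u)$ directly rather than identifying the surviving partitions $\{2k\}$ and $\{k,k\}$ and computing $M^{(k)}(0)$, $M^{(2k)}(0)$, and in \textbf{D} you obtain the weight-$k$ homogeneity and the vanishing of $\pi$ on the diagonal from a rescaling argument and a specialization argument instead of the paper's explicit formula for $\pi$.
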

Define $$q(t) =  \sum_{j=1}^k\mu_jt^j/j!,\quad q_*(t) =  \sum_{j=1}^k\mu^*_jt^j/j!, \quad q_\omega(t) =  \sum_{j=1}^k\omega_jt^j/j!.$$ The proof consists of the following steps (we only summarize them, for more detail see the main text).
\begin{enumerate}[wide=0pt,label=\textbf{\Alph*.}]
\item We showed in the main text that
$$\omega_k\bigg( c_{k,k}\omega_k + \sum_{\substack{\lambda\in S_{2k}\\\ell_{\max}=k}}c_{\lambda}\prod_{\ell\in\lambda\setminus p}\omega_\ell\bigg) =c_{k,k}\omega_k^2  -\kappa_{2k}(M^{(1)}(0), \dots, M^{(2k)}(0)),$$ where
$$M(t) = 1-\frac{\omega_kt^k}{k!}(1+q_\omega(t))^{-1}.$$
\item We show that
$$c_{k,k}\omega_k^2 - \kappa_{2k}(M^{(1)}(0), \dots, M^{(2k)}(0))= -2c_{k,k}\omega_k\frac{d^k}{dt^k}(1+q_\omega(t))^{-1}\big\vert_{t=0},$$
\item We show that 
$$\E_{X_*}\bigg[\frac{d^k}{dt^k}(1+q_\omega(t))^{-1}\big\vert_{t=0}\bigg] = \frac{d^k}{dt^k}\frac{1 + q^*(t)}{1 + q(t)}\bigg\vert_{t=0}.$$
\item We show that
$$\frac{d^k}{dt^k}\frac{1 + q^*(t)}{1 + q(t)}\bigg\vert_{t=0} = \mu_k^* - \mu_k + \pi(\mu_{1:k-1}, \mu_{1:k-1}^*),$$ where $\pi$ has the desired properties.  
\end{enumerate}
In Lemma~\ref{lma:toFinishForReal}, we proved {\bf A.}, so it remains to prove {\bf B., C., D.}
\begin{proof}
For {\bf B.}, note that $M^{(p)}(0) = 0,p=1,\dots, k-1$, so all products $\prod_{\ell\in\lambda}M^{(\ell)}(0)$ involving $\ell < k$ are zero. But the only lists $\lambda\in S_{2k}$ involving only moments of order $k$ and higher are $\lambda = \{k,k\}$ and $\lambda = \{2k\}$. We must therefore compute the $k$th and $2k$th derivatives of $M$ at zero. By the product rule, and using that only the $k$th derivative of $t^k$ is nonzero at $t=0$, we have $$M^{(k)}(0) = -\omega_kq^{-1}(0) =  -\omega_k,\quad M^{(2k)}(0) = -\omega_k{2k\choose k}\frac{d^k}{dt^k}(1+q_\omega(t))^{-1}\big\vert_{t=0}.$$ Therefore, 
\beqs \kappa_{2k}(M^{(1)}(0), \dots, M^{(2k)}(0))  &= c_{2k}M^{(2k)}(0) + c_{k,k}\lp M^{(k)}(0)\rp^2 \\ &=  -c_{2k}\omega_k{2k\choose k}\frac{d^k}{dt^k}(1+q_\omega(t))^{-1}\big\vert_{t=0} + c_{k,k}\omega_k^2 \\&= 2c_{k,k}\omega_k\frac{d^k}{dt^k}(1+q_\omega(t))^{-1}\big\vert_{t=0} +  c_{k,k}\omega_k^2\eeqs (We used that $c_{2k}=1$ and ${2k\choose k} = -2c_{k,k}$, proved in Lemma~\ref{cMM}). Subtracting $c_{k,k}\omega_k^2$ and multiplying by $-1$ gives the desired result.

This concludes {\bf B.} For {\bf C.}, define 
$$G(t) = \E\lb e^{Xt}\rb,\quad G_*(t) = \E\lb e^{X_*t}\rb,\quad G_\omega(t) = \E_X\lb e^{Wt}\rb.$$
Note that $G_{\omega}(t) = 1 + q_\omega(t) + t^{k+1}r_\omega(t)$ for some smooth function $r_\omega(t)$. By Taylor expanding $(1+q_\omega)^{-1}$ and $(1 + q_\omega + t^{k+1}r_\omega)^{-1}$ in a neighborhood of $0$, we get
$$(1+q_\omega)^{-1} = 1 + \sum_{p=1}^\infty(-1)^pq_\omega^p,\quad G_{\omega}^{-1} = 1 + \sum_{p=1}^{\infty}(-1)^p(q_\omega + t^{k+1}r_\omega)^p.$$ Combining like powers of $t$ (justified by the absolute convergence of both series in a neighborhood of zero), we see that the order $t^k$ term in both series are the same, and hence $\frac{d^k}{dt^k}(1+q_\omega(t))^{-1}\big\vert_{t=0} = \frac{d^k}{dt^k}G_\omega(t)^{-1}\big\vert_{t=0}.$ We now take the $X_*$ expectation and bring it inside the derivative. This is justified by the absolute convergence of the series and its derivatives, and the fact that $X_*$ is bounded. Now, note that since $W=X-X_*$, we can write $G_\omega(t)  =  e^{-tX_*}G(t)$, so that $$\E_{X_*}\lb G_\omega(t)^{-1}\rb = G^*(t)/G(t).$$ Finally, 
$G^*/G = (1+q^* + t^{k+1}r^*)(1+q + t^{k+1}r)^{-1}$ for some smooth function $r^*$ and $r$. By a similar Taylor expansion argument as before, the $t^k$ coefficient of the expansion of $G^*/G$ around $t=0$ is the same as that of $(1+q^*)/(1+q)$.\\

This concludes {\bf C.}, and we turn to {\bf D.} We have
\beqs
\frac{1+q^*}{1+q} &= 1 + (q^* - q) + (q^* -q)\lp\sum_{p=1}^k(-q)^p + t^{k+1}c(t)\rp\\
&= \mathrm{l.o.t.} + \frac{\mu_k^* - \mu_k}{k!}t^k + \sum_{\ell=1}^k\frac{\mu_\ell^* - \mu_\ell}{\ell!}t^\ell\sum_{p=1}^k(-q)^p  +\mathrm{h.o.t.}
\eeqs
where $t^{k+1}c(t)$ denotes the higher order terms in the expansion of $(1+q)^{-1}$, l.o.t. denotes terms $t^j$ for $j<k$ and h.o.t. denotes terms $t^j$ for $j>k$. Now, note that the lowest order power of $t$ appearing in $\sum_{p=1}^k(-q)^p$ is $t^1$. Hence, the product $(\mu_k^*-\mu_k)t^k\sum_{p=1}^k(-q)^p$ only involves terms $t^j$, $j>k$, so we can discard $\ell = k$ from the sum. For each $\ell = 1, \dots, k-1$, we collect the terms involving $t^{k-\ell}$ in $\sum_{p=1}^k(-q)^p$. Since the coefficient of $t^j$ in $q$ is $\mu_j/j!$, we collect products of $\mu_j$'s with total moment order $k-\ell$. Hence, the sum of all the $t^{k-\ell}$ coefficients appearing in $\sum_{p=1}^k(-q)^p$ can be written in the form
$\sum_{\lambda\in S_{k-\ell}}d_{\lambda}\prod_{\ell'\in\lambda}\mu_{\ell'}.$ Combining these observations, we have
\beqs
\frac{1+q^*}{1+q} = \mathrm{l.o.t.} + \bigg(\mu_k^* - \mu_k + \sum_{\ell = 1}^{k-1}\lp\mu_{\ell}^* - \mu_{\ell}\rp\sum_{\lambda\in S_{k-\ell}}d_{\lambda}\prod_{\ell'\in\lambda}\mu_{\ell'} \bigg)\frac{ t^k}{k!} + \mathrm{h.o.t.},
\eeqs
where the expression in parenthesis is the $k$th derivative of $(1+q^*)/(1+q)$ at zero, and
$$\pi = \sum_{\ell = 1}^{k-1}\lp\mu_{\ell}^* - \mu_{\ell}\rp\sum_{\lambda\in S_{k-\ell}}d_{\lambda}\prod_{\ell'\in\lambda}\mu_{\ell'}.$$
It is clear to see that $\pi$ depends only on $\mu_{1:k-1}, \mu_{1:k-1}^*$ and that $\pi(\mu_{1:k-1}^*, \mu_{1:k-1}^*) = 0$, and that the total moment order of each term is $k$. 
\end{proof}
Recall that $w = \ta-\ta_*$, where $\ta\sim\rho$ and $\ta_*\sim\rho_*$ have compact support. Recall also the moment tensors
\beq
T_k = \E_{\ta\sim\rho}\lb\ta^{\otimes k}\rb,\quad T_k^* = \E_{\ta_*\sim\rho_*}\lb\ta_*^{\otimes k}\rb.
\eeq
and the space $V_k\lb T_{1:k-1}, T_{1:k-1}^*\rb$ of tensors given by sums of tensor products of $T_i, T_j^*, i,j<k$ where each product has total moment order $k$.  Finally, recall the tensor
$$J_k = \sum_{\substack{\lambda\in S_{2k}\\\ell_{\max}=k}}c_{\lambda}\bigotimes_{\ell\in\lambda\setminus k}\E_{\ta}\lb w^{\otimes \ell}\rb.$$
\begin{lemma}\label{app:equiv}
Lemma~\ref{app:end-of-proof} implies that
$$\E_{\ta_*}\bigg\la x^{\otimes k}, \; J_k + c_{k,k}\E_{\ta}\lb w^{\otimes k}-\ta^{\otimes k}\rb\bigg\ra = \bigg\la x^{\otimes k},\; -2c_{k,k}T_k^* + c_{k,k}T_k + Q_k\bigg\ra,$$ where $Q_k\in V_k\lb T_{1:k-1}, T_{1:k-1}^*\rb$ is $x$-independent.
\end{lemma}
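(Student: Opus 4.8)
The plan is to deduce the tensor identity directly from the scalar statement of Lemma~\ref{app:end-of-proof} by specializing to the one-dimensional projections $X = x^T\ta$ and $X_* = x^T\ta_*$, where $x\in\R^d$ is the fixed vector in the claim. With this choice $W = X - X_* = x^T(\ta-\ta_*) = x^Tw$, and repeated use of the identity $\la y^{\otimes j}, z^{\otimes j}\ra = (y^Tz)^j$ supplies the dictionary
\[
\mu_j = \E[X^j] = \la x^{\otimes j}, T_j\ra, \qquad \mu_j^* = \la x^{\otimes j}, T_j^*\ra, \qquad \omega_j = \E_\ta\big[(x^Tw)^j\big] = \la x^{\otimes j}, \E_\ta[w^{\otimes j}]\ra
\]
between the scalar moments of $X,X_*,W$ and contractions of the moment tensors against $x^{\otimes j}$. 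Note that, exactly as in Lemma~\ref{app:end-of-proof}, $\omega_j$ remains random in $\ta_*$ (equivalently in $X_*$), while $\mu_j,\mu_j^*$ are deterministic.

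The one computational tool I need is that the contraction $\la x^{\otimes n}, \cdot\,\ra$ turns outer products into ordinary products: for tensors $A,B$ of orders $a$ and $b$, $\la x^{\otimes(a+b)}, A\otimes B\ra = \la x^{\otimes a}, A\ra\,\la x^{\otimes b}, B\ra$, which is immediate from writing out the entrywise sums. Applying this to $J_k$ — each of whose summands $\bigotimes_{\ell\in\lambda\setminus k}\E_\ta[w^{\otimes\ell}]$ has total order $\sum_{\ell\in\lambda\setminus k}\ell = 2k-k = k$ and hence pairs with $x^{\otimes k}$ — gives
\[
\la x^{\otimes k}, J_k\ra = \sum_{\substack{\lambda\in S_{2k}\\ \ell_{\max}=k}} c_\lambda \prod_{\ell\in\lambda\setminus k}\omega_\ell,
\]
while $\la x^{\otimes k}, c_{k,k}\E_\ta[w^{\otimes k}-\ta^{\otimes k}]\ra = c_{k,k}(\omega_k - \mu_k)$, since $\la x^{\otimes k}, \E_\ta[\ta^{\otimes k}]\ra = \la x^{\otimes k}, T_k\ra = \mu_k$. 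Thus the quantity inside $\E_{\ta_*}$ on the left of the claim equals precisely the bracketed expression on the left of \eqref{app:XXstarW}, minus the deterministic term $c_{k,k}\mu_k$.

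Taking $\E_{\ta_*}=\E_{X_*}$ and invoking Lemma~\ref{app:end-of-proof}, the left-hand side becomes $2c_{k,k}(\mu_k-\mu_k^*) + \pi(\mu_{1:k-1},\mu_{1:k-1}^*) - c_{k,k}\mu_k = c_{k,k}\mu_k - 2c_{k,k}\mu_k^* + \pi$. The linear part translates back through the dictionary to $\la x^{\otimes k}, c_{k,k}T_k - 2c_{k,k}T_k^*\ra$. For the remainder I would use that each monomial of $\pi$ has the form $\prod_i \mu_{a_i}\prod_j \mu_{b_j}^*$ with $\sum_i a_i + \sum_j b_j = k$ and all $a_i,b_j\le k-1$; running the factorization identity in reverse, such a monomial equals $\la x^{\otimes k}, \bigotimes_i T_{a_i}\otimes\bigotimes_j T_{b_j}^*\ra$. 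Summing over the monomials of $\pi$ with their universal coefficients defines a tensor $Q_k$ that is, by construction, a constant-coefficient linear combination of outer products of $T_a,T_b^*$ ($a,b\le k-1$) of total moment order $k$, i.e. $Q_k\in V_k[T_{1:k-1},T_{1:k-1}^*]$, and is manifestly $x$-independent. Moreover, substituting $T_j=T_j^*$ corresponds to $\mu_j=\mu_j^*$, so $\la x^{\otimes k}, Q_k(T_{1:k-1}^*,T_{1:k-1}^*)\ra = \pi(\mu_{1:k-1}^*,\mu_{1:k-1}^*) = 0$ for all $x$, recovering the vanishing property used in Lemma~\ref{lma:toFinish}.

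The only point demanding care — and the sole obstacle — is the bookkeeping that makes the reverse translation well defined: one must verify that every monomial produced by $\pi$ (and every summand of $J_k$) has total moment order exactly $k$, so that it pairs cleanly with $x^{\otimes k}$ under the factorization identity, and that the resulting coefficients, being those of the universal polynomial $\pi$, carry no dependence on $x$. Both facts are already encoded in Lemma~\ref{app:end-of-proof} (its monomials have total order $k$) and in the definition of $J_k$ (the lists $\lambda\setminus k$ sum to $k$), so the argument requires no new estimate beyond this order-accounting.
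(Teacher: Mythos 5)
Your proof is correct and takes essentially the same route as the paper: both specialize Lemma~\ref{app:end-of-proof} to the projections $X = x^T\ta$, $X_* = x^T\ta_*$, set up the dictionary $\mu_j = \la x^{\otimes j}, T_j\ra$, $\mu_j^* = \la x^{\otimes j}, T_j^*\ra$, $\omega_j = \la x^{\otimes j}, \E_\ta[w^{\otimes j}]\ra$, use the factorization of $\la x^{\otimes k},\cdot\,\ra$ over outer products to identify the left side with the scalar expression of \eqref{app:XXstarW}, and then reverse-translate. The only difference, immaterial for the statement at hand, is your treatment of $\pi$: the paper substitutes its explicit grouped form $\sum_{\ell}(\mu_\ell^*-\mu_\ell)\sum_{\lambda\in S_{k-\ell}}d_\lambda\prod_{\ell'\in\lambda}\mu_{\ell'}$, which makes the vanishing $Q_k(T_{1:k-1}^*,T_{1:k-1}^*)=0$ hold as a tensor identity, whereas your monomial-by-monomial reversal gives it only after pairing with $x^{\otimes k}$ — which still suffices wherever the paper uses it, since $Q_k$ is only ever paired against symmetric tensors such as $T_k$.
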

\begin{proof}
Take $X=x^T\ta$, $X_* = x^T\ta_*$, $W = x^T(\ta-\ta_*) = x^Tw$. Then \beq\label{mumustaromega}\mu_j = \lla x^{\otimes j},\;T_j\rra ,\quad \mu_j^*= \lla x^{\otimes j},\; T_j^*\rra ,\quad \omega_j = \lla x^{\otimes j},\; \E_\ta\lb w^{\otimes j}\rb\rra.\eeq  Using \eqref{app:XXstarW}, we then have
\beqs\label{xJc}
\E_{\ta_*}\bigg\la x^{\otimes k}, \; J_k &+ c_{k,k}\E_{\ta}\lb w^{\otimes k}-\ta^{\otimes k}\rb\bigg\ra \\&=  \E_{X_*}\bigg[\bigg(\sum_{\substack{\lambda\in S_{2k}\\\ell_{\max}=k}}c_{\lambda}\prod_{\ell\in\lambda\setminus k}\omega_\ell\bigg) + c_{k,k}(\omega_k - \mu_k)\bigg]\\
&=\E_{X_*}\bigg[\bigg(\sum_{\substack{\lambda\in S_{2k}\\\ell_{\max}=k}}c_{\lambda}\prod_{\ell\in\lambda\setminus k}\omega_\ell\bigg) + c_{k,k}\omega_k\bigg] - c_{k,k}\mu_k\\
&= c_{k,k}\mu_k - 2c_{k,k}\mu_k^* + \pi(\mu_{1:k-1}, \mu_{1:k-1}^*)\\
&= \bigg\la x^{\otimes k},\; -2c_{k,k}T_k^* + c_{k,k}T_k\bigg\ra + \pi(\mu_{1:k-1}, \mu_{1:k-1}^*).
\eeqs
Now, we show in Lemma~\ref{app:end-of-proof} that $\pi$ has the form 
$$\pi(\mu_{1:k-1}, \mu_{1:k-1}^*) = \sum_{\ell = 1}^{k-1}\lp\mu_{\ell}^* - \mu_{\ell}\rp\sum_{\lambda\in S_{k-\ell}}d_{\lambda}\prod_{\ell'\in\lambda}\mu_{\ell'}.$$
Using the representation ~\eqref{mumustaromega}, we can write $\pi$ as 
\beqs
\pi(\mu_{1:k-1}, \mu_{1:k-1}^*) &= \sum_{\ell = 1}^{k-1}\lla x^{\otimes \ell},\;T_\ell^*-T_\ell\rra\sum_{\lambda\in S_{k-\ell}}d_{\lambda}\prod_{\ell'\in\lambda}\lla x^{\otimes \ell'},\;T_{\ell'}\rra\\
&=  \lla x^{\otimes k},\; \sum_{\ell = 1}^{k-1}(T_\ell^*-T_\ell)\otimes\sum_{\lambda\in S_{k-\ell}}d_{\lambda}\bigotimes_{\ell'\in\lambda}T_{\ell'}\rra
\eeqs
The tensor on the right hand side is the desired $Q_k$. We clearly have $Q(T_{1:k-1}=T_{1:k-1}^*, T_{1:k-1}^*)=0$.
\end{proof}
\section{Moment Tensor Computations}
Let $\ta, \ta_*\in\R^d$ be random vectors distributed according to distributions $\rho$ and $\rho_*$ respectively, each of which is a compactly supported probability measure. Recall the moment tensors
\beqs
T_k &= T_k(\rho) = \E_{\ta\sim\rho}\lb\ta^{\otimes k}\rb,\\
T_k^* &= T_k(\rho_*) = \E_{\ta_*\sim\rho_*}\lb\ta_*^{\otimes k}\rb.
\eeqs
For a multi-index $\ell = (\ell_1, \dots, \ell_{k})$, we write $T_k^\ell$ to denote the tensor entry $T_k^{\ell_1,\dots, \ell_k}$. 

Recall the spaces $V_k, R_k$ from Definition~\ref{VkRk}. We define these spaces more formally here. 
\begin{defn}
Consider finite lists $I, J$ of positive integers, which may include several of the same number, satisfying
\beq\label{app:IJ}\max_{i\in I}i \leq m,\,\;\max_{j\in J}j \leq n,\;\sum_{i\in I}i + \sum_{j\in J}i = k.\eeq
Consider the set of tensors $S_{k,m,n} = \left\{\prod_{i\in I}T_i\otimes \prod_{j\in J}T_j^*\,\bigg\vert\,I,J\,\mathrm{satisfy}\,\eqref{app:IJ}\right\}$. We define the following spaces
\beqs
V_k\lb T_{1:m}, T_{1:n}^*\rb &=\mathrm{span}\; S_{k,m,n},\\
R_k\lb T_{1:m}, T_{1:n}^*\rb &= \mathrm{span}\left\{ \la A, T\ra\mid A\in{\R^d}^{\otimes k}, T\in S_{k,m,n}\right\}
\eeqs

\end{defn}
\begin{example}
The set $S_{2,1,1}$ is given by $S_{2,1,1} = \{T_1\otimes T_1, T_1\otimes T_1^*, T_1^*\otimes T_1\}.$ Hence, tensors in the space $V_2\lb T_1, T_1^*\rb$ are of the form
$$aT_1\otimes T_1 + bT_1\otimes T_1^* + cT_1^*\otimes T_1*$$ while polynomials in the space $R_2\lb T_1, T_1^*\rb$ are of the form
$$\lla A, T_1\otimes T_1\rra +\lla B, T_1\otimes T_1^*\rra + \lla C, T_1^*{\otimes}T_1^*\rra.$$
\end{example}
We begin with a result about moment tensors for the orbit recovery model. 


\begin{prop}\label{app:orbit-recovery}
Let $G\subset O(d)\subset R^{d\times d}$ be a subgroup of the orthogonal group in dimension $d$, and $\ta,\ta_*\in\R^d$ be deterministic vectors. Let $\gamma$ be a Haar measure on $G$, and let $T_k, T_k^*$ be the moment tensors of the distributions ${\bf g}\ta,\;{\bf g}\ta_*,\;{\bf g}\sim\gamma$, i.e.
\beq\label{orbit-dist}T_k = \E_{{\bf g}\sim\gamma}\lb\lp{\bf g}\ta\rp^{\otimes k}\rb,\quad T_k^* = \E_{{\bf g}\sim\gamma}\lb\lp{\bf g}\ta_*\rp^{\otimes k}\rb.\eeq Then for tensors $Q \in V_k\lb T_{1:m}, T_{1:n}^*\rb$, we have
\beq\label{orbit-eqn}\lla T_k, Q\rra\in R_{2k}\lb T_{1:m}, T_{1:n}^*\rb.\eeq 
\end{prop}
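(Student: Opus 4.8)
The plan is to peel off the outer expectation defining $T_k$ using the Haar invariance of the moment tensors, after which the contraction factorizes into a product of contractions of \emph{low-order} moment tensors. By linearity of $\la\cdot,\cdot\ra$ and the definition of $V_k\lb T_{1:m},T_{1:n}^*\rb$, it suffices to treat a single outer product
\[
Q = S_{k_1}\otimes\cdots\otimes S_{k_n},\qquad S_{k_i}\in\{T_{k_i},T_{k_i}^*\},\quad \sum_{i=1}^n k_i = k .
\]
The only structural input I would use is that the Haar property $\bg\stackrel{d}{=}h\bg$ for all $h\in G$ makes every moment tensor $G$-invariant: for each $j$,
\[
h^{\otimes j}T_j = \E_{\bg}\lb (h\bg\ta)^{\otimes j}\rb = \E_{\bg}\lb(\bg\ta)^{\otimes j}\rb = T_j,
\]
and likewise $h^{\otimes j}T_j^*=T_j^*$, for all $h\in G$.

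First I would record an elementary invariance lemma: if $S\in(\R^d)^{\otimes j}$ satisfies $g^{\otimes j}S=S$ for all $g\in G$, then $\la (g\ta)^{\otimes j}, S\ra = \la \ta^{\otimes j}, S\ra$ for every $g\in G$. This is immediate upon moving $g^{\otimes j}$ across the inner product: since $g\in O(d)$ we have $(g^{\otimes j})^T=(g^{-1})^{\otimes j}$, so $\la (g\ta)^{\otimes j},S\ra = \la \ta^{\otimes j}, (g^{-1})^{\otimes j}S\ra = \la \ta^{\otimes j}, S\ra$ by invariance applied with $g^{-1}\in G$. In particular $\la (\bg\ta)^{\otimes k_i}, S_{k_i}\ra$ does not depend on $\bg$, and averaging the tautology $\la \ta^{\otimes k_i},S_{k_i}\ra = \la(\bg\ta)^{\otimes k_i},S_{k_i}\ra$ over $\bg\sim\gamma$ reinserts a Haar average for free, giving $\la \ta^{\otimes k_i}, S_{k_i}\ra = \la T_{k_i}, S_{k_i}\ra$. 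Next I would expand the contraction using $\la x^{\otimes k}, A_1\otimes\cdots\otimes A_n\ra=\prod_i\la x^{\otimes k_i},A_i\ra$ (the slots of $T_k$ split according to the factorization of $Q$):
\[
\la T_k, Q\ra = \E_{\bg}\la (\bg\ta)^{\otimes k}, Q\ra = \E_{\bg}\prod_{i=1}^n \la (\bg\ta)^{\otimes k_i}, S_{k_i}\ra = \prod_{i=1}^n \la T_{k_i}, S_{k_i}\ra,
\]
where in the last step each factor is $\bg$-independent and equals $\la T_{k_i},S_{k_i}\ra$ by the lemma, so the outer expectation disappears.

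Each factor $\la T_{k_i}, S_{k_i}\ra$ is a full contraction of two moment tensors, i.e.\ a constant-coefficient combination of products of moment-tensor entries of total moment order $2k_i$; hence the product has total moment order $\sum_i 2k_i = 2k$ and lies in $R_{2k}$, the moment tensors involved having orders $k_i\le\max(m,n)$ (which is $\le m$ in the symmetric case $m=n$ relevant to Lemma~\ref{mainthm}). This is exactly $\la T_k, Q\ra\in R_{2k}\lb T_{1:m},T_{1:n}^*\rb$, and it makes transparent that $\la T_k, Q\ra$ depends only on moment tensors of order at most $\max(m,n)$, even when $k$ is larger. The one genuinely load-bearing step — and the place I expect to have to argue most carefully — is the double use of the invariance lemma: first to kill the outer $\bg$-expectation, and then to trade the bare $\ta^{\otimes k_i}$ that remains for the honest moment tensor $T_{k_i}$ without inflating the order beyond what $R_{2k}\lb T_{1:m},T_{1:n}^*\rb$ permits; the remaining manipulations are the orthogonality identity $(g^{\otimes j})^T=(g^{-1})^{\otimes j}$ and the bookkeeping of total moment order.
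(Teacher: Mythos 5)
Your proof is correct and takes essentially the same approach as the paper: the paper's Lemma~\ref{app:orbit-recovery-lma} is exactly your factorization $\lla T_k,Q\rra=\prod_i\lla T_{k_i},S_{k_i}\rra$, proved there too by using Haar invariance to show each $\lla(\bg\ta)^{\otimes k_i},S_{k_i}\rra$ is $\bg$-independent and then averaging over $\bg$. The only cosmetic differences are that you encode the invariance as $h^{\otimes j}T_j=T_j$ together with $(g^{\otimes j})^T=(g^{-1})^{\otimes j}$ (the paper instead manipulates the scalar $\lla g\ta,h\ta\rra$ and changes variables in the Haar integral), the paper finishes by rewriting the product of contractions as a single contraction against explicit constant tensors $M_{2p}$, and your $\max(m,n)$ caveat about the orders of the unstarred tensors that appear is, if anything, a more careful piece of bookkeeping than the paper's (both arguments coincide in the symmetric case $m=n=k-1$ actually used in Lemma~\ref{mainthm}).
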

The proposition will follow from the following lemma. 
\begin{lemma}\label{app:orbit-recovery-lma}
Let $I, J$ satisfy \eqref{app:IJ}. Then for moment tensors \eqref{orbit-dist}, we have
\beq\label{app:orbit-lemma}
\lla T_{k},\; \prod_{i\in I}T_i\otimes\prod_{j\in J}T_j^*\rra = \prod_{i\in I}\lla T_i,\; T_i\rra\prod_{j\in J}\lla T_j,\;T_j^*\rra.
\eeq
\end{lemma}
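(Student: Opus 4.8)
The plan is to expand every moment tensor as an expectation over an independent Haar copy of $\bg$, reduce the pairing to a scalar expectation via the slotwise inner-product identity, and then let the left-invariance of $\gamma$ perform the decoupling. Concretely, introduce mutually independent $\bg,\ \bg_i\ (i\in I),\ \mathbf{h}_j\ (j\in J)$, all distributed as $\gamma$, so that $T_k=\E[(\bg\ta)^{\otimes k}]$, $T_i=\E[(\bg_i\ta)^{\otimes i}]$ and $T_j^*=\E[(\mathbf{h}_j\ta_*)^{\otimes j}]$. By bilinearity of the entrywise inner product,
$$\lla T_k,\ \bigotimes_{i\in I}T_i\otimes\bigotimes_{j\in J}T_j^*\rra = \E\lla (\bg\ta)^{\otimes k},\ \bigotimes_{i\in I}(\bg_i\ta)^{\otimes i}\otimes\bigotimes_{j\in J}(\mathbf{h}_j\ta_*)^{\otimes j}\rra,$$
the expectation running over all the independent copies. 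Since $\sum_{i\in I}i+\sum_{j\in J}j=k$, the second tensor is a pure $k$-fold outer product whose slots are filled by the vectors $\bg_i\ta$ and $\mathbf{h}_j\ta_*$; applying the slotwise generalization $\la y^{\otimes k},\,u_1\otimes\cdots\otimes u_k\ra=\prod_{s=1}^{k}(y^Tu_s)$ of the identity $\la x^{\otimes k},y^{\otimes k}\ra=(x^Ty)^k$ gives
$$\lla T_k,\ \bigotimes_{i\in I}T_i\otimes\bigotimes_{j\in J}T_j^*\rra = \E\Big[\prod_{i\in I}\big((\bg\ta)^T(\bg_i\ta)\big)^{i}\prod_{j\in J}\big((\bg\ta)^T(\mathbf{h}_j\ta_*)\big)^{j}\Big].$$

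The decoupling step uses orthogonality together with the Haar property. Because $G\subset O(d)$, each factor rewrites as $(\bg\ta)^T(\bg_i\ta)=\ta^T\bg^{-1}\bg_i\ta$, and likewise $(\bg\ta)^T(\mathbf{h}_j\ta_*)=\ta^T\bg^{-1}\mathbf{h}_j\ta_*$. Conditioning on $\bg$ and invoking left-invariance of the Haar measure (i.e. $\bg'\stackrel{d}{=}h\bg'$ for every fixed $h\in G$ when $\bg'\sim\gamma$), the family $(\bg^{-1}\bg_i)_{i\in I},(\bg^{-1}\mathbf{h}_j)_{j\in J}$ is again a collection of independent $\gamma$-distributed elements, independent of $\bg$. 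Hence the conditional expectation no longer depends on $\bg$, and joint independence lets it factor into a product of single-copy expectations (renaming the fresh copies $\bg_i,\mathbf{h}_j$ again):
$$\lla T_k,\ \bigotimes_{i\in I}T_i\otimes\bigotimes_{j\in J}T_j^*\rra = \prod_{i\in I}\E\big[(\ta^T\bg_i\ta)^{i}\big]\prod_{j\in J}\E\big[(\ta^T\mathbf{h}_j\ta_*)^{j}\big].$$

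It remains to identify each factor, which I would do by running the same orthogonality-plus-left-invariance computation in reverse: $\la T_i,T_i\ra=\E[((\bg\ta)^T(\bg'\ta))^{i}]=\E[(\ta^T\bg^{-1}\bg'\ta)^{i}]=\E[(\ta^T\bg'\ta)^{i}]$ for an independent copy $\bg'\sim\gamma$, and similarly $\la T_j,T_j^*\ra=\E[(\ta^T\mathbf{h}\ta_*)^{j}]$; these match the single-copy factors above, yielding \eqref{app:orbit-lemma}. I expect the decoupling to be the crux: the factors in the product all share the common rotation $\bg$ inherited from $T_k$, so without the Haar property they would stay correlated and the clean product form would fail. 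Left-invariance is precisely what absorbs $\bg^{-1}$ into each independent copy and turns the expectation of the product into a product of expectations; the remaining ingredients (bilinearity, the slotwise inner-product identity, and $\bg^T=\bg^{-1}$) are routine.
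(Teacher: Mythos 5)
Your proof is correct and follows essentially the same route as the paper's: both factor the pairing slotwise, rewrite each factor via orthogonality as $\ta^T\bg^{-1}\bg_i\ta$ (resp. $\ta^T\bg^{-1}\mathbf{h}_j\ta_*$), and use left-invariance of the Haar measure to absorb $\bg^{-1}$ into the remaining Haar variable, which is exactly the decoupling mechanism the paper relies on. The only difference is bookkeeping: the paper first establishes the per-factor constancy $\la (g\ta)^{\otimes \ell}, T_\ell\ra = \la T_\ell, T_\ell\ra$ for every fixed $g\in G$ and then expands only $T_k$ as an average over $g$, whereas you expand all tensors as expectations over independent copies at once and decouple by conditioning on $\bg$.
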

\begin{proof}[Proof of Proposition~\ref{app:orbit-recovery}]
To prove the proposition, it suffices to show $\la T_k, Q\ra\in R_{2k}\lb T_{1:m}, T_{1:n}^*\rb$ for $Q\in S_{k,m,n}$. Let $Q = \prod_{i\in I}T_i\otimes\prod_{j\in J}T_j^*$. By the lemma, we have 
$$\lla T_k, Q\rra = \prod_{i\in I}\lla T_i,\; T_i\rra\prod_{j\in J}\lla T_j,\;T_j^*\rra.$$

Now, for an order $2p$ multi-index $\ell$, write $\ell = (\ell_1, \ell_2)$, where $\ell_1$ and $\ell_2$ are order $p$ multi-indices. Define the order $2p$ tensor $M_{2p}$ by $M_{2p}^{\ell_1, \ell_2} = 1$ if $\ell_1 = \ell_2$ and $M_{2p}^{\ell_1, \ell_2}=0$ otherwise. We can then write
$\la T_i, T_i\ra = \la M_{i}, T_i\otimes T_i\ra$ and $\la T_j, T_j^*\ra = \la M_{j}, T_j\otimes T_j^*\ra.$
Thus, 
\beqs
\prod_{i\in I}\lla T_i,\; T_i\rra\prod_{j\in J}\lla T_j,\;T_j^*\rra &=\prod_{i\in I}\lla M_{2i}, \; T_i\otimes T_i\rra\prod_{j\in J}\lla M_{2j},\;T_j\otimes T_j^*\rra \\
&= \lla\prod_{i\in I}M_{2i}\otimes\prod_{j\in J}M_{2j},\;\prod_{i\in I}T_i\otimes T_i\otimes\prod_{j\in J}T_j\otimes T_j^*.\rra 
\eeqs
The tensor product on the right has total order $2k$ and involves only the tensors $T_{1:m}$, $T_{1:n}^*$. It therefore lies in $S_{2k,m,n}$, so its inner product with a constant tensor belongs to $R_{2k}\lb T_{1:m}, T_{1:n}^*\rb$.
\end{proof}
\begin{proof}[Proof of Lemma~\ref{app:orbit-recovery-lma}]
Since the elements $g$ of the group $G$ act on vectors by orthogonal transformation, we have
$$\lla g\ta,\; h\ta\rra =\lla \ta,\; g^{-1}h\ta\rra =  \lla g'\ta,\; g'g^{-1}h\ta\rra\;\forall g, g',h\in G.$$ This implies 
\beqs
\lla \lp g\ta\rp^{\otimes \ell},\; T_\ell\rra &= \E_{h\sim\rho}\lla g\ta, h\ta\rra^\ell=\E_{h\sim\rho}\lla g'\ta, g'g^{-1}h\ta\rra^\ell \\&=  \lla (g'\ta)^{\otimes \ell}, \E_{h\sim\rho}\lb \lp g'g^{-1}h\ta\rp^{\otimes\ell}\rb\rra = \lla (g'\ta)^{\otimes \ell},\; T_\ell\rra\;\forall g,g'\in G,
\eeqs
since the measure $\rho$ on $G$ is Haar and therefore invariant under multiplication by group elements. 
Similarly, $\la \lp g\ta\rp^{\otimes \ell},\; T_\ell^*\ra = \la \lp g'\ta\rp^{\otimes \ell},\; T_\ell^*\ra\;\forall g,g'\in G$. 
Averaging over $g'\in G$ then gives
$$\lla \lp g\ta\rp^{\otimes \ell},\; T_\ell\rra = \lla T_\ell,\; T_\ell\rra,\quad \lla \lp g\ta\rp^{\otimes \ell},\; T_\ell^*\rra = \lla T_\ell,\; T_\ell^*\rra\;\forall g\in G.$$ We therefore have
\beqs
\lla T_k,\; \prod_{i\in I}T_{i}\otimes\prod_{j\in J}T_j^*\rra &=  \E_{g\sim\rho}\lla (g\ta)^{\otimes k},\; \prod_{i\in I}T_{i}\otimes\prod_{j\in J}T_j^*\rra\\
&=\E_{g\sim\rho}\prod_{i\in I}\lla (g\ta)^{\otimes i},\; T_{i}\rra\prod_{j\in J}\lla (g\ta)^{\otimes j},\;T_j^*\rra\\
&=\prod_{i\in I}\lla T_{i},\; T_{i}\rra\prod_{j\in J}\lla T_j,\;T_j^*\rra.
\eeqs
\end{proof}
We return now to the general setting in which $\ta\sim\rho, \ta_*\sim\rho_*$ are random. Recall that the entries of $\ta$ are denoted with superscripts, i.e.  $\ta = (\ta^1, \dots, \ta^d)$ and similarly for $\ta_*$. 
\begin{lemma}\label{lemma:thstar-expect}
Let $v_1,\dots,v_n$ be multi-indices, where the indices in each multi-index are between $1$ and $d$. Let $|v_j|$ denote the number of indices in $v_j$. 
Define $L = \max\{|v_1|,|v_2|,\dots, |v_n|\}$ and $M = |v_1| + |v_2| + \dots + |v_n|$, and let
\beq\label{lem:prod}
a(\ta, \ta_*) = \E_{\ta_*}\prod_{j=1}^n\E_\ta\lb\prod_{k\in v_j}(\ta^k-\ta_*^k)\rb
\eeq
Then $a(\ta, \ta_*)\in R_M\lb T_{1:L}, T_{1:M}^*\rb.$
\end{lemma}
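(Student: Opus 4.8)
The plan is to reduce everything to explicit monomials and then simply track moment orders. First I would fix a realization of $\ta_*$ and expand the inner $\ta$-expectation block by block. For each $j$, expanding the product over the positions of the multi-index $v_j$ and choosing from each factor either $\ta^k$ or $-\ta_*^k$ gives
\[
\prod_{k\in v_j}\lp\ta^k-\ta_*^k\rp=\sum_{S\subseteq v_j}(-1)^{|v_j|-|S|}\lp\prod_{k\in S}\ta^k\rp\prod_{k\in v_j\setminus S}\ta_*^k,
\]
where $S$ ranges over subsets of the index positions of $v_j$. Applying $\E_\ta$ (with $\ta_*$ still frozen) and using linearity turns each pure $\ta$-monomial into a single entry of the moment tensor $T_{|S|}$, so that
\[
\E_\ta\lb\prod_{k\in v_j}\lp\ta^k-\ta_*^k\rp\rb=\sum_{S\subseteq v_j}(-1)^{|v_j|-|S|}\,\lp T_{|S|}\rp^{S}\prod_{k\in v_j\setminus S}\ta_*^k,
\]
where $\lp T_{|S|}\rp^{S}$ is the entry of $T_{|S|}$ indexed by the entries of $S$. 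Since $|S|\le|v_j|\le L$, every $\ta$-moment tensor appearing here has order at most $L$.

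Next I would multiply over $j=1,\dots,n$ and then take the outer $\ta_*$-expectation. The product expands into a sum over tuples $(S_1,\dots,S_n)$ of terms of the form
\[
\pm\lp\prod_{j=1}^n\lp T_{|S_j|}\rp^{S_j}\rp\prod_{j=1}^n\prod_{k\in v_j\setminus S_j}\ta_*^k,
\]
in which the first factor is a product of entries of moment tensors of $\ta$ (each of order $\le L$) and the second factor is a monomial in the entries of $\ta_*$ of total degree $m:=M-\sum_{j}|S_j|$. Taking $\E_{\ta_*}$ leaves the first factor untouched and sends the $\ta_*$-monomial to a single entry of $T_m^*$. The one point that deserves emphasis is that the $\ta_*$-averaging couples factors coming from different blocks $v_j$, but the result is nonetheless just one entry of one tensor $T_m^*$, of order $m\le M$; this is exactly why the statement permits $\ta_*$-moments up to order $M$ while restricting the $\ta$-moments to order $L$.

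Finally I would verify the membership in $R_M\lb T_{1:L},T_{1:M}^*\rb$. Each surviving term is a product of entries of $T_{|S_1|},\dots,T_{|S_n|}$ (orders $\le L$) and one entry of $T_m^*$ (order $m\le M$), with total moment order $\sum_j|S_j|+m=M$. By the definition of $R_M\lb T_{1:L},T_{1:M}^*\rb$ as the span of inner products $\lla A,\,\T\rra$ with $\T$ a tensor product of such moment tensors of total moment order $M$, each such product of entries lies in $R_M\lb T_{1:L},T_{1:M}^*\rb$ (take $A$ to be the appropriate coordinate tensor), and hence so does the finite linear combination $a(\ta,\ta_*)$. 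There is no genuine obstacle here beyond careful bookkeeping; the only thing requiring attention is confirming the two order bounds, namely $|S_j|\le L$ for the $\ta$-moments and $m\le M$ for the single $\ta_*$-moment produced by the coupled outer expectation.
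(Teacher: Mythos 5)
Your proof is correct and follows essentially the same route as the paper's own argument: a binomial expansion of each block $\prod_{k\in v_j}(\ta^k-\ta_*^k)$ over subsets, linearity of $\E_\ta$ to produce entries of $T_{|S|}$ with $|S|\le L$, multiplication over $j$, and then the outer $\E_{\ta_*}$ converting the residual $\ta_*$-monomial into a single entry of a tensor $T_m^*$ with $m\le M$, keeping the total moment order equal to $M$. Your write-up is slightly more explicit than the paper's about the cross-block coupling under $\E_{\ta_*}$ and the final membership check in $R_M\lb T_{1:L},T_{1:M}^*\rb$, but the substance is identical.
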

\begin{proof}
For an arbitrary multi-index $v$, we have
\beq\label{T_nI}\begin{split}
\E_{\ta}\left[\prod_{k\in v}(\ta^{k}-\ta_*^k)\right]&=\E_{\ta}\left[\sum_{I\subset v}\prod_{i\in I}\ta^{i}(-1)^{|v|-|I|}\prod_{j\in v\setminus I}\ta_*^j\right]\\
&= \sum_{I\subset v}\E_{\ta}\left[\prod_{i\in I}\ta^{i}\right](-1)^{|v|-|I|}\prod_{j\in v\setminus I}\ta_*^j\\
&= \sum_{I\subset v}T_{|I|}^{I}(-1)^{|v|-|I|}\left(\prod_{j\in v\setminus I}\ta_*^j\right).
\end{split}\eeq
Note that $|I| \leq |v|$. Now, applying \eqref{T_nI} for $v = v_1,\dots, v_n$ and multiplying the results together yields entries of $\ta$-moment tensors of order no higher than $L=\max_{1\leq i\leq n}|v_i|$ as well as products of at most $M = |v_1| + \dots + |v_n|$ entries of $\ta_*$. Upon taking the $\ta_*$-expectation, these products will become entries of $\ta_*$-moment tensors of order at most $M$. Noting that the sum of the moment orders in the product \eqref{lem:prod} is $M$, we see that $a(\ta, \ta_*) \in R_M\lb T_{1:L}, T_{1:M}^*\rb$, as desired. 
\end{proof}

Recall the definition
$$J_p = \sum_{\substack{\lambda\in S_{2p}\\\ell_{\max}=p}}c_{\lambda}\bigotimes_{\ell\in\lambda\setminus p}\E_{\ta}\lb w^{\otimes \ell}\rb,$$ where $w = \ta - \ta_*$.  
\begin{lemma}\label{r-nu}
Define
$$r(\ta, \ta_*) = \E_{\ta_*}\bigg\la \E_\ta\lb w^{\otimes p}\rb - T_p,\; J_p - c_{p,p}T_p\bigg\ra.$$
Then $r(\ta, \ta_*) \in R_{2p}\lb T_{1: p-1}, T_{1:2p}^*\rb.$
\end{lemma}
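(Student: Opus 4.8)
The plan is to show that the two order-$p$ tensors being contracted, $A := \E_\ta\lb w^{\otimes p}\rb - T_p$ and $B := J_p - c_{p,p}T_p$, each have entries that are polynomials in the entries of $T_1,\dots,T_{p-1}$ (constants with respect to $\ta_*$) and of $\ta_*$, where every monomial has total moment order exactly $p$ and involves at most $p$ entries of $\ta_*$. Once this is established, the conclusion will follow by contracting and taking $\E_{\ta_*}$.

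First I would record the two cancellations that eliminate $T_p$ from each factor. For $A$, expanding $w^{\otimes p}=(\ta-\ta_*)^{\otimes p}$ entrywise via the binomial identity \eqref{T_nI}, the unique term producing the full $\ta$-moment $T_p$ is the one in which every factor is $\ta$; subtracting $T_p$ removes exactly this term, so each surviving monomial of $A$ carries a factor $T_j$ with $j\leq p-1$ (or none) together with entries of $\ta_*$. For $B$, the key observation is that among the lists $\lambda\in S_{2p}$ with $\ell_{\max}=p$, the only one whose reduced list $\lambda\setminus p$ still contains a $p$ is $\lambda=\{p,p\}$ (two copies of $p$ already exhaust the total $2p$). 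Hence $J_p = c_{p,p}\E_\ta\lb w^{\otimes p}\rb + \sum' c_\lambda\bigotimes_{\ell\in\lambda\setminus p}\E_\ta\lb w^{\otimes\ell}\rb$, where the primed sum runs over the remaining $\lambda$ and every $\ell$ there satisfies $\ell<p$. Therefore $B = c_{p,p}A + \sum' c_\lambda\bigotimes_{\ell\in\lambda\setminus p}\E_\ta\lb w^{\otimes\ell}\rb$, and since each block $\E_\ta\lb w^{\otimes\ell}\rb$ with $\ell\leq p-1$ expands (again by \eqref{T_nI}) into $\ta$-moments of order $\leq p-1$ and entries of $\ta_*$, the tensor $B$ too involves only $T_1,\dots,T_{p-1}$.

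Next I would track degrees. Each entry of $A$ is a sum of monomials $T_{|I|}^{I}\prod_j\ta_*^{j}$ with $|I|+(\#\ta_*\text{ factors})=p$, so $A$ has total moment order $p$ and $\ta_*$-degree at most $p$; applying the same bookkeeping blockwise to $\bigotimes_{\ell\in\lambda\setminus p}\E_\ta\lb w^{\otimes\ell}\rb$, whose block sizes sum to $p$, shows $B$ likewise has total moment order $p$ and $\ta_*$-degree at most $p$. Consequently every term of the scalar $\la A, B\ra = \sum_v A^v B^v$ is a polynomial whose monomials have total moment order $2p$, carry only $T$-factors of order $\leq p-1$, and contain at most $2p$ entries of $\ta_*$.

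Finally I would take the $\ta_*$-expectation: the $T_{\leq p-1}$-factors are constant in $\ta_*$ and pull out, while the product of at most $2p$ entries of $\ta_*$ becomes, under $\E_{\ta_*}$, a single entry of a $\ta_*$-moment tensor $T_q^*$ with $q\leq 2p$. Each resulting term is thus a product of entries of $T_1,\dots,T_{p-1}$ and one entry of some $T_q^*$ with $q\leq 2p$, of total moment order $2p$, i.e.\ an element of $R_{2p}\lb T_{1:p-1}, T_{1:2p}^*\rb$; summing over $v$ and over monomials gives $r(\ta,\ta_*)\in R_{2p}\lb T_{1:p-1}, T_{1:2p}^*\rb$. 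This last step is precisely the mechanism of Lemma~\ref{lemma:thstar-expect}, which could alternatively be invoked once the $T_p$-free structure of $A$ and $B$ is in hand. I expect the only genuine subtlety — the main obstacle — to be verifying that $T_p$ really does cancel from both factors, since it is precisely the $-c_{p,p}T_p$ correction together with the $\lambda\neq\{p,p\}$ restriction that prevents an unwanted $T_p$ from raising the $\ta$-moment order above $p-1$; the remainder is routine degree counting.
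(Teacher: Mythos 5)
Your proposal is correct and follows essentially the same route as the paper's proof: the paper likewise splits $J_p - c_{p,p}T_p$ into $\lp J_p - c_{p,p}\E_\ta\lb w^{\otimes p}\rb\rp + c_{p,p}\lp\E_\ta\lb w^{\otimes p}\rb - T_p\rp$, uses the proper-subset expansion \eqref{T_nI} to see that $T_p$ cancels from the second piece, uses the same observation that $\lambda=\{p,p\}$ is the only list in $S_{2p}$ with $\ell_{\max}=p$ whose reduction still contains a $p$ to see that the first piece involves only $w$-moments of order at most $p-1$, and then concludes by the same degree counting and $\ta_*$-expectation (the mechanism of Lemma~\ref{lemma:thstar-expect}). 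The only difference is organizational: the paper bounds $\E_{\ta_*}\la Q_p,Q_p\ra$ and $\E_{\ta_*}\la Q_p,Q_p'\ra$ separately, whereas you establish the structural properties of both factors first and contract once, which amounts to the same computation.
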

\begin{proof}
Define $Q_p = \E_\ta\lb w^{\otimes p}\rb - T_p$ and $Q_p' = J_p - c_{p,p}\E_\ta\lb w^{\otimes p}\rb$, so that 
$$\bigg\la \E_\ta\lb w^{\otimes p}\rb - T_p,\; J_p - c_{p,p}T_p\bigg\ra = \lla Q_p, Q_p' + c_{p,p}Q_p\rra.$$ Let $v$ be a multi-index with $|v|=p$. By \eqref{T_nI} of Lemma~\ref{lemma:thstar-expect},we have
\beq
Q_p^v = \E_\ta\lb\prod_{k\in v}(\ta^k-\ta_*^k) - \prod_{k\in v}\ta^k\rb=\sum_{I\subsetneqq v}T_{|I|}^{I}(-1)^{|v|-|I|}\left(\prod_{i\in v\setminus I}\ta_{*i}\right).
\eeq
Thus, we have
\beqs
\E_{\ta_*}\lla Q_p, Q_p \rra &= \sum_{v}\sum_{I,J\subsetneqq v}c_{I,J}T_{|I|}^IT_{|J|}^J\E_{\ta_*}\prod_{i\in v\setminus I}\ta_{*i}\prod_{j\in v\setminus J}\ta_{*j}\\
&= \sum_{v}\sum_{I,J\subsetneqq v}c_{I,J}T_{|I|}^IT_{|J|}^JT_{2p-|I|-|J|}^{*v\setminus I, v\setminus J}\\
&= \sum_{v}\sum_{I,J\subsetneqq v}\lla \delta_{I,J,v},\;T_{|I|}\otimes T_{|J|}\otimes T_{2p-|I|-|J|}^*\rra,
\eeqs
where $\delta_{I,J,v}\in \lp\R^{d}\rp^{\otimes 2p}$ is such that $\delta_{I,J,v}^{\ell} = 1$ if $\ell = (I, J, v\setminus I, v\setminus J)$ and $\delta_{I,J,v}^{\ell} = 0$ otherwise. Thus, 
$$\E_{\ta_*}\lla Q_p, Q_p \rra \in R_{2p}\lb T_{1: p-1}, T_{1:2p}^*\rb.$$
Now, $Q_p'$ is given by
\beq
Q_p' = J_p - c_{p,p}\E_\ta\lb w^{\otimes p}\rb = \sum_{\substack{\lambda\in S_{2p}\\\lambda_{\max}=p,\\ \max(\lambda\setminus p) < p}}\bigotimes_{\ell\in\lambda\setminus p}\E_\ta\lb w^{\otimes \ell}\rb
\eeq
Similarly to $Q_p$, the entries of $Q_p'$ are given by sums of products of at most $p$ entries of $\ta_*$ with products of entries of $T_k$ for $k\leq p-1$. We can therefore similarly show that $\E_{\ta_*}\lla Q_p, Q_p' \rra \in R_{2p}\lb T_{1: p-1}, T_{1:2p}^*\rb.$ 
\end{proof}
Recall that to finish the proof of Proposition~\ref{EZW}, we need to compute
$$\sum_{\substack{\lambda\in S_{2k}\\\ell_{\max}=k}}c_{\lambda}\E_W\lb\E_{\ta}\lb(w^TW)^k\rb\prod_{\ell\in\lambda\setminus k}\E_{\ta}\lb(w^T\overline W)^{\ell}\rb\rb,$$ where $W = Z+iZ'$, and $Z,Z'\in\R^d$ are i.i.d. standard normal vectors. We compute the $W$-expectation of an arbitrary summand in the next proposition. Note that if $\lambda \in S_{2k}$ and $\ell_{\max}=k$, then $\lambda\setminus k\in S_k$. We therefore rename $\lambda\setminus k$ as $\lambda\in S_k$. 
\begin{prop}\label{app:ZexpectFinal}
Let $W = Z + iZ'$, where $Z,Z'\in\R^d$ are i.i.d. standard normal random vectors, and let $\lambda\in S_k$. 
We have
\beq\label{app:EZkappa}
	\E_W\left[\E_\ta\lb (w^TW)^k\rb\prod_{\ell\in\lambda}\E_\ta\lb (w^T\overline W)^\ell\rb\right]= 2^{k}k!\lla \E_\ta\lb w^{\otimes k}\rb, \; \bigotimes_{\ell\in\lambda}\E_\ta\lb w^{\otimes \ell}\rb\rra.
\eeq
\end{prop}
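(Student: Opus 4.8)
The plan is to reduce the claim to the classical Wick (Isserlis) formula for the mixed moments of a circularly symmetric complex Gaussian vector. First I would record the relevant second moments of $W=Z+iZ'$: since $Z,Z'$ are independent standard Gaussians, for any fixed real vectors $a,b\in\R^d$ one has $\E[(a^TW)(b^TW)]=0$, $\E[(a^T\overline W)(b^T\overline W)]=0$, and $\E[(a^TW)(b^T\overline W)]=2\,a^Tb$. To expose the factored structure I would replace the inner $\ta$-expectations by independent copies of $\ta$: write $\lambda=(\ell_1,\dots,\ell_n)$, let $\ta_0,\ta_1,\dots,\ta_n$ be i.i.d.\ copies of $\ta$ (independent of $W$), and set $w_i=\ta_i-\ta_*$. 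Then, interchanging the $\ta$- and $W$-expectations (justified by compact support of $\ta$, which makes each $w_i$ bounded, and finiteness of all moments of $W$),
\[\E_W\Big[\E_\ta[(w^TW)^k]\prod_{\ell\in\lambda}\E_\ta[(w^T\overline W)^\ell]\Big]=\E_{\ta_0,\dots,\ta_n}\,\E_W\Big[(w_0^TW)^k\prod_{i=1}^n(w_i^T\overline W)^{\ell_i}\Big].\]

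Next I would evaluate the inner $W$-expectation for fixed copies by Wick's theorem. The integrand is a product of $k$ linear forms of $W$-type (all equal to $w_0^TW$) and $\sum_i\ell_i=k$ linear forms of $\overline W$-type. By the second-moment identities above, every Wick pairing within a single type vanishes, so only pairings that match each $W$-slot with a distinct $\overline W$-slot survive; there are exactly $k!$ such bijections. A bijection pairing a $W$-slot with a $\overline W$-slot coming from the $i$-th factor contributes $\E[(w_0^TW)(w_i^T\overline W)]=2\,w_0^Tw_i$, and since every bijection uses each $\overline W$-slot exactly once, each of the $k!$ bijections contributes the same product $2^k\prod_{i=1}^n(w_0^Tw_i)^{\ell_i}$. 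Hence
\[\E_W\Big[(w_0^TW)^k\prod_{i=1}^n(w_i^T\overline W)^{\ell_i}\Big]=2^k\,k!\prod_{i=1}^n(w_0^Tw_i)^{\ell_i}.\]

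Finally I would take the expectation over the independent copies. Using $(w_0^Tw_i)^{\ell_i}=\la w_0^{\otimes\ell_i},w_i^{\otimes\ell_i}\ra$ and concatenating the tensor indices gives $\prod_{i}(w_0^Tw_i)^{\ell_i}=\la w_0^{\otimes k},\bigotimes_{i} w_i^{\otimes\ell_i}\ra$; by independence of $\ta_0,\ta_1,\dots,\ta_n$ the expectation factors through the individual tensor slots, yielding $\la\E_\ta[w^{\otimes k}],\bigotimes_{\ell\in\lambda}\E_\ta[w^{\otimes\ell}]\ra$. Combining the three displays produces the claimed identity with constant $2^k k!$. The only genuinely delicate point is the combinatorial bookkeeping in the Wick step: correctly arguing that exactly $k!$ cross-type pairings survive and that each contributes identically. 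Notably, the symmetry of $\E_\ta[w^{\otimes k}]$ is never explicitly invoked, since it is built into the fact that all $k$ of the $W$-slots carry the same linear form $w_0^TW$; the interchange of expectations and the second-moment computations are routine.
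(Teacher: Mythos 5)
Your proof is correct and follows essentially the same route as the paper's: introduce i.i.d.\ copies of $\ta$ so the product of $\ta$-expectations becomes a single expectation, interchange it with the $W$-expectation, evaluate the resulting mixed moment of the jointly circularly symmetric linear forms to get the factor $2^k k!\prod_{\ell}(w_{(k)}^Tw_{(\ell)})^\ell$, and conclude via the tensor inner-product identity and independence. The only difference is that where the paper cites a ready-made mixed-moment formula for circularly symmetric complex Gaussians from its reference on Gaussian moments, you derive that formula directly from the real Wick/Isserlis theorem by noting that all same-type pairings have vanishing covariance, which makes the step self-contained but is otherwise the identical computation.
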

\begin{proof}
First, let $w_{(\ell)} = \ta_{(\ell)}-\ta_*,$ where $\ta_{(\ell)}\sim\rho,\;\ell\in\lambda\cup\{k\}$ are i.i.d. and $\ta_*$ is considered fixed throughout the proof. Define $V_{(\ell)} = w_{(\ell)}^TW, \;\ell\in\lambda\cup\{k\}$. Note that $(V_{(k)};\,(V_\ell)_{\ell\in\lambda})$ is jointly circularly symmetric for any fixed values of $w_{(k)},w_{(\ell)},\ell\in\lambda$ and that $\E_W[V_{(\ell)}\overline V_{(\ell')}] = 2w_{(\ell)}^Tw_{(\ell')},\;\ell,\ell'\in \lambda\cup\{k\}$. We write 
\beqs
\E_W\left[\E_\ta\lb (w^TW)^k\rb\prod_{\ell\in\lambda}\E_\ta\lb (w^T\overline W)^\ell\rb\right] &=\E_W\left[\E_{\ta_{(k)}}\lb (w_{(k)}^TW)^k\rb\prod_{\ell\in\lambda}\E_{\ta_{(\ell)}}\lb (w_{(\ell)}^T\overline W)^\ell\rb\right]\\
&= \E_{\ta_{(k)},\ta_{(\ell)},\ell\in\lambda}\,\E_W\bigg[V_{(k)}\prod_{\ell\in\lambda}\overline V_{(\ell)}\bigg].\eeqs
Using a result in \cite{moments}, we have that
\beqs\label{app:EZkappa-intermed}
\E_W\bigg[V_{(k)}\prod_{\ell\in\lambda}\overline V_{(\ell)}\bigg] &= k!\prod_{\ell\in\lambda}\E_W\left[V_{(k)}\overline V_{(\ell)}\right]^{\ell}= 2^kk!\prod_{\ell\in\lambda}(w_{(k)}^Tw_{(\ell)})^{\ell} \\
&= 2^kk!\lla w_{(k)}^{\otimes k}, \bigotimes_{\ell\in\lambda}w_{(\ell)}^{\otimes \ell}\rra.\eeqs
We take the expectation with respect to $\ta_{(k)},\ta_{(\ell)},\ell\in\lambda$, and bring the expectations inside the products to conclude. 
\end{proof}

\section{Estimates for EM}
Let $\bm\chi$ be a random variable supported in a set $X$. We denote samples from $\bm\chi$ by $\chi$. Let $\ta =(\ta_\chi)_{\chi\in X},$ where $\ta_\chi\in\R^d,\;\chi\in X$. For the purposes of this section $X$ need not be finite. We define
$\|\ta\|_\infty = \sup_{\chi\in X}\|\ta_\chi\|$, and $T_k(\ta) = \E_{\bm\chi\sim\rho}\lb\ta_{\bm\chi}^{\otimes k}\rb$. Recall from Section~\ref{sec:EM} that we may write the ground truth GMM as 
$$Y = \sg Z + \ta_{*\bm\chi}, \quad\bm\chi\sim\rho.$$ We will assume $\|\ta_*\|_\infty=1$ and that $\rho$ is known. Recall that the standard EM update is given by $\ta_\chi^{(t+1)} = G_\chi(\ta^{(t)})$, where
$$G_\chi(\ta) = \frac{\E_Y\lb w_\ta(Y,\chi)Y\rb}{\E_Y\lb w_\ta(Y,\chi)\rb},$$ and
$$
w_\ta(Y,\chi) = g\lp \frac{Y -\ta_{\chi}}{\sg}\rp\bigg/\E_{\bm\chi\sim\rho}\lb g\lp \frac{Y -\ta_{\bm\chi}}{\sg}\rp\rb
$$
Here, $g$ denotes the standard normal density in $\R^d$. We will write $G$ to denote $(G_\chi)_{\chi\in X}$.  In the following two lemmas and proposition, we let $R>0$ be constant, and assume $\|\ta_*\|_\infty=1$.
\begin{lemma}\label{lem:weights-expansion}
For all $\ta$ such that $(\|\ta\|_\infty\vee 1)/\sg\leq R$ and for all $\chi\in X$, we have
$$\E_Y\lb w_\ta(Y, \chi)\rb = 1 + \epsilon_1(\chi, \ta, \ta_*),$$ where
$$|\epsilon_1(\chi, \ta, \ta_*)|\leq C\lp\frac{\|\ta\|_\infty\vee 1}{\sg}\rp^2,$$
and $C$ depends on $R$ and $d$ only. 
\end{lemma}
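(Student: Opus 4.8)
The plan is to recast $\E_Y\lb w_\ta(Y,\chi)\rb$ so that the small parameter $1/\sg$ appears explicitly. The true density of $Y = \sg Z + \ta_{*\bm\chi}$ is $\sg^{-d}q_{\ta_*}(y)$, and $w_\ta(y,\chi) = g\lp(y-\ta_\chi)/\sg\rp/q_\ta(y)$, so that $\sg^{-d}q_{\ta_*}(y)w_\ta(y,\chi) = \phi_\chi(y)\,q_{\ta_*}(y)/q_\ta(y)$, where $\phi_\chi$ is the $\mathcal N(\ta_\chi,\sg^2 I)$ density and the ratio $q_{\ta_*}/q_\ta$ does not depend on the normalization convention. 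Integrating in $y$ gives
\[ \E_Y\lb w_\ta(Y,\chi)\rb = \E_{Y'\sim\mathcal N(\ta_\chi,\sg^2 I)}\lb q_{\ta_*}(Y')/q_\ta(Y')\rb. \]
Writing $Y' = \ta_\chi + \sg X$ with $X\sim\mathcal N(0,I)$ and cancelling the common factor $\e\lp-\|X\|^2/2\rp$ shared by the two mixture densities, this equals $\E_X\lb N_s(X)/D_s(X)\rb$ with $s = 1/\sg$, where
\[ N_s(x) = \E_{\bm\chi}\lb\e\lp -s\,x^T(\ta_\chi-\ta_{*\bm\chi}) - \tfrac{s^2}{2}\|\ta_\chi-\ta_{*\bm\chi}\|^2\rp\rb \]
and $D_s$ is defined identically with $\ta_{\bm\chi}$ replacing $\ta_{*\bm\chi}$.

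I would then set $F(s) = \E_X\lb N_s(X)/D_s(X)\rb$ and Taylor expand, $F(s) = F(0) + F'(0)s + \tfrac12 F''(\xi)s^2$ for some $\xi\in(0,s)$, at $s=1/\sg$. Since $N_0\equiv D_0\equiv 1$ we have $F(0) = 1$. Differentiating once at $s=0$ and using $N_0'(X) = -X^T(\ta_\chi - T_1^*)$ and $D_0'(X) = -X^T(\ta_\chi - T_1)$, the fixed center $\ta_\chi$ cancels and $\tfrac{d}{ds}(N_s/D_s)\big\vert_{s=0} = X^T(T_1^* - T_1)$; hence $F'(0) = \E_X\lb X^T(T_1^*-T_1)\rb = 0$ because $X$ has mean zero (note this uses no assumption on $T_1,T_1^*$). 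Therefore $\E_Y\lb w_\ta(Y,\chi)\rb = 1 + \tfrac12 F''(\xi)\sg^{-2}$, and the lemma reduces to the uniform bound $|F''(\xi)| \le C B^2$ for $\xi\in[0,1/\sg]$, where $B := \|\ta\|_\infty\vee1$.

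For this bound, abbreviate $b_{\bm\chi}^* = \ta_\chi - \ta_{*\bm\chi}$ and $b_{\bm\chi} = \ta_\chi - \ta_{\bm\chi}$; since $\|\ta_*\|_\infty = 1$, both have norm at most $2B$. By the quotient rule $F''(\xi)$ is a finite sum of $X$-expectations of $N_\xi''/D_\xi$, $N_\xi'D_\xi'/D_\xi^2$, $N_\xi D_\xi''/D_\xi^2$, and $N_\xi (D_\xi')^2/D_\xi^3$. Each derivative of $N_\xi$ or $D_\xi$ brings down one factor of the form $(-x^Tb - \xi\|b\|^2)$ or $-\|b\|^2$, so every term carries exactly two powers of the bounded differences and is $O(B^2)$. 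The main obstacle is uniform integrability, i.e. controlling the factors $1/D_\xi(X)$, which I would handle with the Jensen lower bound
\[ D_\xi(X) \ge \e\lp -\xi X^T\E_{\bm\chi}[b_{\bm\chi}] - \tfrac{\xi^2}{2}\E_{\bm\chi}\|b_{\bm\chi}\|^2\rp \ge \e\lp -2R\|X\| - 2R^2\rp, \]
valid because $\xi B \le B/\sg \le R$; together with $N_\xi(X)\le\e\lp 2R\|X\|\rp$, this shows every summand of $F''(\xi)$ is dominated by $B^2$ times a polynomial in $\|X\|$ times $\e\lp cR\|X\|\rp$, whose Gaussian expectation is finite and depends only on $R$ and $d$ (the same domination justifies differentiating under the expectation throughout). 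This yields $|F''(\xi)|\le C(R,d)B^2$, hence $|\epsilon_1| = |\tfrac12 F''(\xi)|\,\sg^{-2} \le C(R,d)\lp(\|\ta\|_\infty\vee1)/\sg\rp^2$, as required.
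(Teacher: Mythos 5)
Your proof is correct, and it reaches the lemma by a route that differs from the paper's in several genuine ways, even though both share the same skeleton: a second-order Taylor expansion in $s=1/\sg$ whose zeroth-order term is $1$, whose first-order term vanishes because the Gaussian has mean zero, and whose Lagrange remainder is bounded uniformly by $C(R,d)\lp\|\ta\|_\infty\vee 1\rp^2$. The differences are these. First, the paper never changes measure: it writes $Y=\sg Z+\ta_{*\bm\chi'}$ and Taylor-expands the posterior weight $w_\ta(Y,\chi)$ itself, pointwise in $(Z,\bm\chi')$ (so the remainder point $\xi$ is random), and only afterwards takes expectations; you instead tilt the expectation to $\mathcal N(\ta_\chi,\sg^2 I)$, turning the quantity into the integrated likelihood ratio $F(s)=\E_X\lb N_s/D_s\rb$, and apply Taylor's theorem to $F$ (deterministic $\xi$). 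This is why you must --- and do --- justify differentiating under the expectation by domination, a step the paper's pointwise expansion does not require. Second, to bound the remainder the paper recycles the estimate \eqref{M-bound} on $|\partial_t^\ell M/M|$ from the proof of the main theorem, combined with the pointwise inequality $|f|<1$ for the posterior weight; you instead control numerator and denominator separately, using a Jensen lower bound on $D_\xi$ and a direct upper bound on $N_\xi$ and its derivatives. This last difference is a real advantage of your argument: the inequality $|f|<1$ invoked by the paper is not valid in general (a single component's posterior weight $w_\ta(Y,\chi)$ can exceed $1$; only its $\rho$-average over $\chi$ equals $1$), whereas your separate control of $N_\xi$ and $1/D_\xi$ never needs it and is therefore more robust. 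What the paper's route buys in exchange is brevity: no interchange of derivative and expectation to justify, and reuse of estimates already established in the proof of the likelihood expansion.
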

\begin{proof} Let $\bm\chi'$ be an independent copy of $\bm\chi$. Analogously to the proof of the main theorem, we write $Y = \sg Z + \ta_{*\bm\chi'}$ . Since $\ta,\ta_*,\chi$ are fixed throughout the proof, and since $Y$ depends on $\sg$, $Z$, and $\bm\chi'$, we rename $w_\ta(Y, \chi)$ as $f(t, Z, \bm\chi')$, where $t=1/\sg$. Let $\delta = \sup_{\chi,\chi'\in X}\|\ta_\chi - \ta_{*\chi'}\|$, so that $\delta/\sg \leq 2R$. Also, define $v(\bm\chi,\bm\chi') = \ta_{\bm\chi}-\ta_{*\bm\chi'}$. (This corresponds to $w = \ta-\ta_*$ in the proof of the main theorem) and note that $\|v\|\leq\delta$. Now, let 
$$p(t,v, Z) = (Z^Tv)t - \frac{\|v\|^2}{2}t^2,$$ and note that $$g(Z - v/\sg) = \e\,\lp-\frac12\|Z\|^2\rp\e\, p(1/\sg,v, Z).$$ We then have
$$f(t, Z, \bm\chi') = \e\;p\bigg(t, v(\chi,\bm\chi'),Z\bigg)\bigg/\E_{\bm\chi\sim\rho}\bigg[\e\; p\bigg( t, v(\bm\chi,\bm\chi'),Z\bigg)\bigg].$$ 

 Let $M(t, Z, \bm\chi')$ denote the denominator of $f$,  so that $f = e^p/M$. We expand $f$ around $t=0$, and take its expectation with respect to $Z, \bm\chi'$. In the following, we let $f', f''$ denote the first and second partial derivative of $f$ with respect to $t$, respectively; $t$-derivatives of $p$ and $M$ are written analogously. We also suppress the arguments $Z, \bm\chi'$ on the right hand side. We have
\beq\label{EYft}\E_{Z,\bm\chi'}[f(t, Z, \bm\chi')] = 1+ t\E_{Z,\bm\chi'}[f'(0)] + \frac12t^2\E_{Z,\bm\chi'}[f''(\xi)],\quad |\xi|\leq t.\eeq Now, $f'= f(p' - M'/M)$. Note that $p' = Z^Tv -\|v\|^2t$ and $M' = \E_\chi\lb e^{p}p'\rb.$ Hence, $p'(0) = Z^Tv$, $M(0) = 1$, and $M'(0) = Z^T\E_\chi[v].$ Combining, we see that $f'(0) = Z^T(v- \E_\chi[v])$, which has zero $Z$-expectation. We now compute the second derivative of $f$. We have
$$
f'' = f\lp p'' - M''/M + (M'/M)^2\rp + f\lp p' - (M'/M)\rp^2, 
$$ so that
$$
|f''| \leq |p''| + |M''/M| + 3|M'/M|^2 + 2|p'|^2
$$
since $|f| < 1$. Recall from \eqref{M-bound} that
\beqs
\left|\frac{\partial_t^\ell M(\xi)}{M(\xi)}\right| \leq \delta^\ell\E_{Z'}(\|W\| + \delta|t|)^\ell,
\eeqs where $W = Z + iZ'$ and $Z'$ is an independent copy of $Z$. We therefore have $|M'/M|^2 \leq |M''/M|$, and using that $\delta|t| = \delta/\sg\leq2R$, we have
$$|M''/M|  \leq \delta^2\E_{Z'}\lb (\|W\| + 2R)^2\rb .$$
Now, $|p'(\xi)|^2 \leq \delta^2(\|Z\| + \delta|t|)^2\leq \delta^2\E_{Z'}\lb \lp\|W\| + 2R\rp^2\rb$ and $|p''(\xi)| \leq \delta^2$. Combining these estimates we obtain
\beq\label{fprimeprime}|f''(\xi)| \leq \delta^2 + 6\delta^2\E_{Z'}\lb \lp\|W\| + 2R\rp^2\rb \leq \delta^2(C  + \|Z\|^2).\eeq Taking the expectation of both sides of the inequality gives
$$\big|\E_{Z,\bm\chi'}[f''(\xi)]\big| \leq C\delta^2.$$ Substituting these calculations into \eqref{EYft} and taking $t=1/\sg$, we obtain
$$\E_Y\lb w_\ta(Y, \chi)\rb = \E_{Z,\bm\chi'}[f(1/\sg, Z, \bm\chi')] = 1 + \epsilon_1(\chi, \ta, \ta_*),$$ where $$|\epsilon_1(\chi, \ta, \ta_*)| \leq \frac12\sg^{-2}\big|\E_{Z,\bm\chi'}[f''(\xi)]\big| \leq C\lp\frac{\delta}{\sg}\rp^2 \leq C\lp\frac{\|\ta\|_\infty\vee 1}{\sg}\rp^2.$$
\end{proof}
\begin{lemma}
For all $\ta$ such that $(\|\ta\|_\infty\vee 1)/\sg\leq R$ and for all $\chi\in X$, we have
$$\E_Y\lb w_\ta(Y,\chi)Y\rb = T_1(\ta_*) + \ta_\chi - T_1(\ta) + \epsilon_2(\chi,\ta, \ta_*),$$ where
$$\|\epsilon_2(\chi,\ta, \ta_*)\|\leq C\frac{(\|\ta\|_\infty\vee 1)^2}{\sg},$$ and $C$ depends on $R$ and $d$ only. 
\end{lemma}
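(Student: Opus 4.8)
The plan is to reuse the reparameterization from the proof of Lemma~\ref{lem:weights-expansion}. Writing $Y = \sg Z + \ta_{*\bm\chi'}$ with $\bm\chi'$ an independent copy of $\bm\chi$, and recalling that $w_\ta(Y,\chi) = f(1/\sg, Z, \bm\chi')$ for $f = e^p/M$ as defined there, I would start from
\[
\E_Y\lb w_\ta(Y,\chi)Y\rb = \sg\,\E_{Z,\bm\chi'}\lb f\,Z\rb + \E_{Z,\bm\chi'}\lb f\,\ta_{*\bm\chi'}\rb .
\]
The essential feature is that the prefactor $\sg = 1/t$ (with $t = 1/\sg$) multiplies the first term, so I must Taylor expand $f$ to \emph{second} order, $f(t) = 1 + t f'(0) + \tfrac{t^2}{2}f''(\xi)$ with $|\xi|\le t$, and confirm that the order-$t^0$ contribution to $\E[fZ]$ cancels before the $\sg$ factor can blow it up.

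First I would record the derivative already computed in Lemma~\ref{lem:weights-expansion}: $f'(0) = Z^T\big(v - \E_{\bm\chi}[v]\big)$ with $v = \ta_\chi - \ta_{*\bm\chi'}$ and $\E_{\bm\chi}[v] = T_1(\ta) - \ta_{*\bm\chi'}$, so the $\ta_{*\bm\chi'}$ terms cancel and $f'(0) = Z^T(\ta_\chi - T_1(\ta))$ --- a scalar, linear in $Z$, and independent of $\bm\chi'$. The leading terms then fall out. In $\sg\,\E[fZ]$ the constant part contributes $\sg\,\E[Z] = 0$, while the linear part gives $\sg t\,\E_Z[ZZ^T](\ta_\chi - T_1(\ta)) = \ta_\chi - T_1(\ta)$, using $\sg t = 1$ and $\E[ZZ^T]=I$. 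In $\E[f\,\ta_{*\bm\chi'}]$ the constant part gives $\E[\ta_{*\bm\chi'}] = T_1(\ta_*)$, and the linear part $t\,\E\big[(Z^T(\ta_\chi - T_1(\ta)))\,\ta_{*\bm\chi'}\big]$ vanishes since $Z\indep\bm\chi'$ and $\E[Z]=0$. Together these produce exactly the claimed main term $T_1(\ta_*) + \ta_\chi - T_1(\ta)$.

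It then remains to bound the remainder $\epsilon_2 = \E_{Z,\bm\chi'}\big[\tfrac{t^2}{2}f''(\xi)(\sg Z + \ta_{*\bm\chi'})\big]$, which (using $\sg t^2 = t$) splits as $\tfrac{t}{2}\E[f''(\xi)Z] + \tfrac{t^2}{2}\E[f''(\xi)\ta_{*\bm\chi'}]$. I would invoke the pointwise bound $|f''(\xi)|\le \delta^2(C + \|Z\|^2)$ from \eqref{fprimeprime}, valid since $\delta|\xi|\le\delta/\sg\le 2R$, together with $\|\ta_{*\bm\chi'}\|\le\|\ta_*\|_\infty = 1$. Taking expectations leaves only Gaussian moments of $\|Z\|$, which are $d$-dependent constants, so $\|\epsilon_2\|\le \tfrac{t}{2}\delta^2\,\E[(C+\|Z\|^2)\|Z\|] + \tfrac{t^2}{2}\delta^2\,\E[C+\|Z\|^2]$. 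Using $\delta\le\|\ta\|_\infty + 1\le 2(\|\ta\|_\infty\vee 1)$ and $1/\sg\le R$ to absorb the stray $t = 1/\sg$ factor in the second piece, both terms are bounded by a constant (depending on $R,d$) times $(\|\ta\|_\infty\vee 1)^2/\sg$, as required. The only genuinely delicate step is the one flagged above: because of the $\sg$ prefactor the first-order Taylor coefficient must be computed \emph{exactly} rather than merely bounded, and the order-$t^0$ term must be seen to cancel via $\E[Z]=0$, so that the residual is of order $t = 1/\sg$ instead of $O(1)$.
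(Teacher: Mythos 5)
Your proposal is correct and follows essentially the same route as the paper's proof: the same reparameterization $Y=\sg Z+\ta_{*\bm\chi'}$ with $w_\ta(Y,\chi)=f(1/\sg,Z,\bm\chi')$, the same second-order Taylor expansion with $f'(0)=Z^T(\ta_\chi-T_1(\ta))$ producing the main term via $\E[Z]=0$, $\E[ZZ^T]=I$, and $Z\indep\bm\chi'$, and the same remainder bound via $|f''(\xi)|\leq\delta^2(C+\|Z\|^2)$. The only cosmetic difference is that you absorb the extra $1/\sg$ in the second remainder piece using $1/\sg\leq R$ while the paper uses $\delta/\sg\leq 2R$; both are valid.
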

\begin{proof}
Recall from the previous Lemma that $\delta = \sup_{\chi,\chi'\in X}\|\ta_\chi - \ta_{*\chi'}\|$, so that $\delta/\sg \leq2R$. Also, we defined $t = 1/\sg$ and expressed $w_\ta(Y,\chi)$ for fixed $\chi$ and $\ta$ as $f(t, Z, \bm\chi')$. We Taylor expanded $f$ around $t=0$ to second order as:
\beqs
f(t, Z, \bm\chi')  &= 1 + tf'(0) + \frac12t^2f''(\xi) = 1 + t Z^T(v- \E_\chi[v]) + \frac12t^2f''(\xi)\\
&= 1 + tZ^T(\ta_\chi - T_1(\ta))+\frac12t^2f''(\xi)
\eeqs where $|f''(\xi)|\leq \delta^2(C + \|Z\|^2)$ for an absolute constant $C$. We now multiply this Taylor expansion by $Y = (1/t)Z + \ta_{*\bm\chi'}$, and take its $Z, \bm\chi'$-expectation:
\beqs \E_{Z, \bm\chi'}\lb(\frac1tZ + \ta_{*\bm\chi'})f(t, Z, \bm\chi')\rb =& \E_{Z, \bm\chi'}\bigg[\frac1tZ + \ta_{*\bm\chi'}+ ZZ^T(\ta_\chi - T_1(\ta)) \\
&+ tZ^T(\ta_\chi - T_1(\ta))\ta_{*\bm\chi'} + \frac12tf''(\xi)Z + \ta_{*\bm\chi'}\frac12t^2f''(\xi)\bigg]\\
&= T_1(\ta_*) + \ta_\chi - T_1(\ta) + \epsilon_2(\chi, \ta, \ta_*),\eeqs where
\beq
\epsilon_2(\chi, \ta, \ta_*) = \frac12t\E_{Z, \bm\chi'}\lb f''(\xi)Z\rb + \frac12t^2\E_{Z, \bm\chi'}\lb f''(\xi)\ta_{*\bm\chi'}\rb.
\eeq
Now, using $t=1/\sg,\delta/\sg\leq 2R,$ and $\delta\leq 2(\|\ta\|_\infty\vee1)$, we have
$$
\big\|\frac12t\E_{Z, \bm\chi'}\lb f''(\xi)Z\rb\big\| \leq \frac12(\delta^2/\sg)\E_Z \big[\|Z\|(C + \|Z\|^2)\big] \leq C(\|\ta\|_\infty\vee1)^2/\sg
$$
and
$$\big\|\frac12t^2\E_{Z, \bm\chi'}\lb f''(\xi)\ta_{*\bm\chi'}\rb\big\|\leq (\delta/\sg)^2\E_Z\lb C + \|Z\|^2\rb \leq C(\|\ta\|_\infty\vee1)/\sg.$$ Adding the two upper bounds together, we have
$$\|\epsilon_2(\chi,\ta,\ta_*)\|\leq C(\|\ta\|_\infty\vee1)^2/\sg,$$ as desired. 
\end{proof}
\begin{prop}\label{app:T1-EM}
For all $\ta$ such that $(\|\ta\|_\infty\vee 1)/\sg\leq R$, we have
$$\frac{\|T_1(G(\ta))-T_1(\ta_*)\|}{\|\ta\|_\infty\vee1}\leq C(\|\ta\|_\infty\vee1)/\sg$$ for some constant $C$ that depends on $d$ and $R$ only. 
\end{prop}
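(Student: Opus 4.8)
The plan is to feed the expansions of the numerator and denominator of the standard EM map into a single exact conservation identity. Writing $G_\chi(\ta) = a_\chi/b_\chi$ with $a_\chi = \E_Y\lb w_\ta(Y,\chi)Y\rb$ and $b_\chi = \E_Y\lb w_\ta(Y,\chi)\rb$, we have $T_1(G(\ta)) = \E_{\bm\chi\sim\rho}\lb a_{\bm\chi}/b_{\bm\chi}\rb$. First I would record the identity $\E_{\bm\chi\sim\rho}\lb a_{\bm\chi}\rb = T_1(\ta_*)$: since $\int w_\ta(Y,\chi)\rho(d\chi) = 1$ for every $Y$ (this is exactly the normalization of the posterior $q_\ta(d\chi\mid Y) = w_\ta(Y,\chi)\rho(d\chi)$), Fubini gives $\int a_\chi\,\rho(d\chi) = \E_Y\lb Y\int w_\ta(Y,\chi)\rho(d\chi)\rb = \E_Y\lb Y\rb = T_1(\ta_*)$, using $Y = \sg Z + \ta_{*\bm\chi'}$ and $\E\lb Z\rb = 0$.

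With this identity in hand, the target telescopes. Since $b_\chi = 1 + \epsilon_1(\chi,\ta,\ta_*)$ by Lemma~\ref{lem:weights-expansion} and $a_\chi/b_\chi - a_\chi = -a_\chi\epsilon_1(\chi)/b_\chi$, I would write
$$T_1(G(\ta)) - T_1(\ta_*) = \E_{\bm\chi}\lb\frac{a_{\bm\chi}}{b_{\bm\chi}} - a_{\bm\chi}\rb = -\E_{\bm\chi}\lb\frac{a_{\bm\chi}\,\epsilon_1(\bm\chi,\ta,\ta_*)}{b_{\bm\chi}}\rb,$$
and it then remains to control the three factors inside the expectation. The second lemma of this appendix section gives $\|a_\chi\| \leq \|T_1(\ta_*)\| + \|\ta_\chi\| + \|T_1(\ta)\| + \|\epsilon_2(\chi)\| \leq C(\|\ta\|_\infty\vee1)$, where the term $\|\epsilon_2\| \leq C(\|\ta\|_\infty\vee1)^2/\sg$ is absorbed using the hypothesis $(\|\ta\|_\infty\vee1)/\sg \leq R$; and Lemma~\ref{lem:weights-expansion} gives $|\epsilon_1(\chi)| \leq C(\|\ta\|_\infty\vee1)^2/\sg^2$.

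The hard part will be the lower bound on $b_\chi$: Lemma~\ref{lem:weights-expansion} only yields $b_\chi = 1 + \mathcal O(R^2)$, which fails to keep $b_\chi$ away from zero once $R$ is not small, and yet we divide by it. I would instead obtain a uniform positive lower bound $b_\chi \geq c(R,d) > 0$ directly from the representation used in the proof of Lemma~\ref{lem:weights-expansion}, namely $w_\ta(Y,\chi) = e^{p(1/\sg,\,v,\,Z)}/M$ with $p(t,v,Z) = (Z^Tv)t - \frac12\|v\|^2 t^2$, $v = \ta_\chi - \ta_{*\bm\chi'}$, and $M = \E_{\bm\chi}\lb e^{p(1/\sg,\,v(\bm\chi,\bm\chi'),\,Z)}\rb$. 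Since $\|v\| \leq \delta$ and $\delta/\sg \leq 2R$, I get the pointwise bounds $e^{p} \geq e^{-2R\|Z\| - 2R^2}$ and $M \leq e^{2R\|Z\|}$, whence $w_\ta(Y,\chi) \geq e^{-4R\|Z\| - 2R^2}$ and therefore $b_\chi \geq \E_Z\lb e^{-4R\|Z\| - 2R^2}\rb =: c(R,d) > 0$.

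Combining the three estimates would then give
$$\|T_1(G(\ta)) - T_1(\ta_*)\| \leq \frac{1}{c(R,d)}\,\E_{\bm\chi}\lb\|a_{\bm\chi}\|\,|\epsilon_1(\bm\chi)|\rb \leq C(R,d)\,\frac{(\|\ta\|_\infty\vee1)^3}{\sg^2},$$
and factoring out one power of $(\|\ta\|_\infty\vee1)/\sg \leq R$ shows the right-hand side is at most $C(R,d)(\|\ta\|_\infty\vee1)^2/\sg$. Dividing through by $\|\ta\|_\infty\vee1$ yields the stated estimate.
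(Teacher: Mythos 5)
Your proof is correct, and it takes a genuinely different route from the paper's. The paper substitutes both lemmas into the quotient, writing $G_\chi(\ta) = (T_1^* + \ta_\chi - T_1(\ta) + \epsilon_2)/(1+\epsilon_1) = T_1^* + \ta_\chi - T_1(\ta) + \epsilon_3(\chi)$ with $\|\epsilon_3\|\leq C(\|\ta\|_\infty\vee 1)^2/\sg$, and then averages over $\chi$ so that the term $\ta_\chi - T_1(\ta)$ cancels, leaving $T_1(G(\ta)) = T_1^* + \E_\chi\epsilon_3(\chi)$. You instead observe the exact conservation identity $\E_{\bm\chi}\lb a_{\bm\chi}\rb = \E_Y\lb Y\rb = T_1(\ta_*)$, which reduces the entire discrepancy to $-\E_{\bm\chi}\lb a_{\bm\chi}\epsilon_1(\bm\chi)/b_{\bm\chi}\rb$; the second lemma then enters only as a crude norm bound on $a_\chi$. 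Your route buys two things. First, a sharper estimate: you get $\mathcal O\lp(\|\ta\|_\infty\vee 1)^3/\sg^2\rp$ rather than the paper's $\mathcal O\lp(\|\ta\|_\infty\vee 1)^2/\sg\rp$, because the exact identity forces $\E_\chi\lb\epsilon_2(\chi)\rb=0$ and thus eliminates the $\epsilon_2$-contribution that dominates the paper's error (both bounds of course imply the stated inequality). Second, rigor at the division step: the paper's estimate of $\|\epsilon_3\|$ silently drops the denominator $1+\epsilon_1$, which is justified only when $\epsilon_1\geq 0$ or when $R$ is small enough that the lemma's bound of order $R^2$ on $|\epsilon_1|$ stays below $1$; your uniform lower bound $w_\ta(Y,\chi)\geq e^{-4R\|Z\|-2R^2}$, hence $b_\chi\geq c(R,d)>0$, is exactly what is needed to make that step legitimate for an arbitrary fixed $R$, and it would equally patch this gap in the paper's own argument. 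The only cost is that you reach into the proof of Lemma~\ref{lem:weights-expansion} for the representation $w_\ta = e^p/M$, but that follows in one line from the definition of $w_\ta$ after substituting $Y = \sg Z + \ta_{*\bm\chi'}$, so your argument remains self-contained.
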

\begin{proof}
Recall that $G_\chi(\ta) = \E_Y\lb w_\ta(Y,\chi)Y\rb\big/\E_Y\lb w_\ta(Y,\chi)\rb.$ From the previous two lemmas, we have $\E_Y\lb w_\ta(Y,\chi)\rb = 1 + \epsilon_1(\chi),$ and $\E_Y\lb w_\ta(Y,\chi)Y\rb = T_1^* + \ta_\chi - T_1(\ta) + \epsilon_2(\chi)$. We then have
\beq\label{Gchi}G_{\chi}(\ta) = \frac{T_1^* + \ta_\chi - T_1(\ta) + \epsilon_2}{1 + \epsilon_1} = T_1^* + \ta_\chi - T_1(\ta)  + \epsilon_3(\chi),\eeq where 
$$\epsilon_3 = \frac{\epsilon_2 + (T_1(\ta_*)+ \ta_\chi - T_1(\ta))\epsilon_1}{1+\epsilon_1}.$$
Using the bounds on $\epsilon_1,\epsilon_2$ from the lemmas, we have
\beqs\label{eps3}\|\epsilon_3\|&\leq \|\epsilon_2\| + |\epsilon_1|\|T_1(\ta_*) +\ta_\chi - T_1(\ta)\|\\
& \leq \|\epsilon_2\| + C|\epsilon_1|(\|\ta\|_\infty \vee 1)\\
&\leq C(\|\ta\|_\infty \vee 1)^2/\sg.\eeqs
Averaging \eqref{Gchi} over $\chi$ then gives
$$T_1(G(\ta)) = \E_{\chi\sim\rho}G_\chi(\ta) = T_1^* + T_1 - T_1 + \E_\chi\epsilon_3(\chi) = T_1^* + \E_\chi\epsilon_3(\chi).$$ But the bound \eqref{eps3} on $\epsilon_3(\chi)$ is independent of $\chi$, so we have
$$\|T_1(G(\ta))-T_1^*\|\leq \E_\chi\|\epsilon_3(\chi)\| \leq C(\|\ta\|_\infty \vee 1)^2/\sg,$$ as desired. 
\end{proof}

\section{Miscellany}\label{app:miscellany}
\begin{lemma}
Let $f:\R^K\to \R$ and $\mathcal M\subset \R^K$ be a smooth function and manifold, respectively, where $\mathcal M$ is defined as the intersection of the level surfaces of functions $g_1, \dots, g_n$. Then $x$ is a critical point of $f\vert_{\mathcal M}$ iff there exist $\lambda_1, \dots, \lambda_n\in\R$ such that
\beq\label{app:critpoint}\nabla f(x) = \sum_{j=1}^n\lambda_j\nabla g_j(x).\eeq Moreover, a critical point $x$ is a saddle, local minimum, or local maximum of $f\vert_{\mathcal M}$ iff the quadratic form
\beq\label{app:critpoint2} \nabla^2 f(x) - \sum_{j=1}^n\lambda_j\nabla^2g_j(x)\eeq is indeterminate, positive definite, or negative definite, respectively, on the tangent plane to $\mathcal M$ at $x$. 
\end{lemma}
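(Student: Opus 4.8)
The plan is to reduce the constrained problem to an unconstrained one by passing to a Lagrangian, so that both the first- and second-order conditions become the ordinary derivative tests for a function on $\R^{K-n}$. Since $\mathcal M$ is assumed to be a smooth manifold cut out as the common level set of $g_1,\dots,g_n$, the gradients $\nabla g_1(x),\dots,\nabla g_n(x)$ are linearly independent at every $x\in\mathcal M$, and the tangent plane is $T_x\mathcal M=\{v\in\R^K:\langle\nabla g_j(x),v\rangle=0,\ j=1,\dots,n\}$, the orthogonal complement of $\mathrm{span}\{\nabla g_j(x)\}_{j=1}^n$.

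First I would establish the first-order condition. By definition $x$ is a critical point of $f\vert_{\mathcal M}$ iff $\frac{d}{ds}f(\gamma(s))\big\vert_{s=0}=0$ for every smooth curve $\gamma$ in $\mathcal M$ with $\gamma(0)=x$; equivalently $\langle\nabla f(x),v\rangle=0$ for all $v\in T_x\mathcal M$, i.e. $\nabla f(x)\in(T_x\mathcal M)^\perp=\mathrm{span}\{\nabla g_j(x)\}$. This produces multipliers $\lambda_j$ (unique, by linear independence) with $\nabla f(x)=\sum_j\lambda_j\nabla g_j(x)$, establishing \eqref{app:critpoint}.

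Next I would introduce the Lagrangian $\mathcal L(y)=f(y)-\sum_{j=1}^n\lambda_j g_j(y)$. Two facts drive the argument: (i) $\nabla\mathcal L(x)=0$ by the first-order condition, so $x$ is an unconstrained critical point of $\mathcal L$; and (ii) since each $g_j$ is constant on $\mathcal M$, we have $f\vert_{\mathcal M}=\mathcal L\vert_{\mathcal M}+\mathrm{const}$, so $x$ is a local minimum, local maximum, or saddle of $f\vert_{\mathcal M}$ iff it is one of $\mathcal L\vert_{\mathcal M}$. Now fix a local parametrization $\phi:U\subset\R^{K-n}\to\mathcal M$ with $\phi(0)=x$ and $d\phi_0$ an isomorphism onto $T_x\mathcal M$, and set $h(u)=\mathcal L(\phi(u))$. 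Then $\nabla h(0)=d\phi_0^T\nabla\mathcal L(x)=0$, and because $\nabla\mathcal L(x)=0$ annihilates the second-order contribution of $\phi$, the chain rule gives $\nabla^2 h(0)=d\phi_0^T\big(\nabla^2\mathcal L(x)\big)d\phi_0$. Hence the definiteness type of $\nabla^2 h(0)$ on $\R^{K-n}$ agrees exactly with that of the quadratic form $v\mapsto v^T\big(\nabla^2 f(x)-\sum_j\lambda_j\nabla^2 g_j(x)\big)v$ restricted to $\mathrm{range}(d\phi_0)=T_x\mathcal M$. Applying the ordinary second-derivative test to $h$ on $\R^{K-n}$ then yields the three cases of \eqref{app:critpoint2}.

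The main obstacle is purely the bookkeeping in the second-order step: verifying that the Hessian of the pullback $h$ equals the projected Hessian $\nabla^2\mathcal L(x)$ with no leftover curvature term. This is exactly where $\nabla\mathcal L(x)=0$ is essential — differentiating $h(u)=\mathcal L(\phi(u))$ twice produces a term of the form $\langle\nabla\mathcal L(\phi(u)),\partial^2\phi\rangle$ that would otherwise contribute the second fundamental form of $\mathcal M$, but it vanishes at $u=0$. Equivalently, along any curve $\gamma\subset\mathcal M$ with $\gamma'(0)=v$ one computes $\frac{d^2}{ds^2}\mathcal L(\gamma(s))\big\vert_{s=0}=v^T\nabla^2\mathcal L(x)v+\langle\nabla\mathcal L(x),\gamma''(0)\rangle=v^T\nabla^2\mathcal L(x)v$, showing the value is independent of the chosen curve and depends only on $v\in T_x\mathcal M$. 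Everything else is the standard unconstrained second-derivative test.
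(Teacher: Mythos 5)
Your proposal is correct, and at its core it performs the same cancellation as the paper: the first-order condition is the standard Lagrange multiplier argument, and the second-order statement comes from killing the curvature term $\langle\nabla f(x),\gamma''(0)\rangle$ using the multipliers. The difference is in packaging. The paper works directly with curves $u(t)\subset\mathcal M$: it differentiates each constraint $g_j(u(t))=\mathrm{const}$ twice to get $\langle\nabla g_j(x),u''\rangle=-u'^T\nabla^2 g_j(x)\,u'$, substitutes $\nabla f=\sum_j\lambda_j\nabla g_j$, and concludes that $\frac{d^2}{dt^2}f(u(t))\vert_{t=t_x}$ equals the quadratic form \eqref{app:critpoint2} evaluated at $u'(t_x)$. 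You instead absorb the multipliers into the Lagrangian $\mathcal L=f-\sum_j\lambda_j g_j$, observe $\nabla\mathcal L(x)=0$ and $f\vert_{\mathcal M}=\mathcal L\vert_{\mathcal M}+\mathrm{const}$, and pull back through a chart $\phi$ so that $\nabla^2 h(0)=d\phi_0^T\nabla^2\mathcal L(x)\,d\phi_0$ with no second-fundamental-form leftover. These are the same identity, but your chart-based route buys something real in the sufficiency direction: applying the unconstrained second-derivative test to $h$ on $\R^{K-n}$ gives a genuine neighborhood-level local minimum (or maximum), whereas the paper's curve-wise argument, read literally, only establishes extremality along each individual curve, and passing from "min along every curve" to "local min on $\mathcal M$" requires the uniformity that your parametrization provides for free. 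One caveat applies to both write-ups equally: the stated trichotomy is an "iff" only when the restricted quadratic form is nondegenerate; in the semidefinite-but-degenerate case neither argument (nor the lemma as stated) classifies the critical point, so you are not losing anything relative to the paper there.
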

\begin{proof}
The first condition is standard. To show the second condition, note that a critical point $x$ is a local minimum (maximum) of $f\vert_{\mathcal M}$ if and only if for every curve $u(t)\subset\mathcal M$ going through $x$, the function $t\mapsto f(u(t))$ has a local minimum (maximum) at $t=t_x$, the point for which $u(t)=x$. 

Let $u$ be such a curve. Now, $(f\circ u)'(t_x) = \la\nabla f(x),  u'(t_x)\ra = 0$, since $\nabla f(x)$ lies in the span of $\nabla g_j(x),j=1,\dots, n$ while $u'(t_x)$ lies in the tangent plane to $\mathcal M$ at $x$. Hence $t\mapsto f(u(t))$ has a critical point at $t_x$. Note that
\beqs
\frac{d^2}{dt^2}f(u(t))\big\vert_{t=t_x}&= \la \nabla f(x), u''(t_x)\ra + u'(t_x)^T\nabla^2f(x)u'(t_x) \\
&=  \sum_{j=1}^n\lambda_j\la \nabla g_j(x), u''(t_x)\ra+ u'(t_x)^T\nabla^2f(x)u'(t_x) \\
&= u'(t_x)^T\lp\nabla^2 f(x) - \sum_{j=1}^n\lambda_j\nabla^2g_j(x)\rp u'(t_x).
\eeqs
The last line follows from the fact that 
$$0 = \frac{d^2}{d\ta^2}g_j(u(t_x)) =\la \nabla g_j(x), u''(t_x)\ra + u'(t_x)^T\nabla^2g_j(x)u'(t_x),\;j=1,\dots, n.$$
Now, $u'(t_x)$ is any vector in the tangent plane of $\mathcal M$ at $x$, i.e. any vector perpendicular to $\nabla g_j(x), j=1, \dots, n$. Thus, for $\frac{d^2}{dt^2}f(u(t_x))$ to have the same sign for all curves $u$, the quadratic form $\nabla^2 f(x) - \sum_{j=1}^n\lambda_j\nabla^2g_j(x)$ must be determinate on the tangent plane at $x$. 
\end{proof}

\end{document}